\newtheorem{theorem}{Theorem}
\newtheorem{lemma}[theorem]{Lemma}
\newtheorem{proposition}[theorem]{Proposition}
\newtheorem{definition}{Definition}
\newcommand{\ep}{\varepsilon}
\newcommand{\te}{\theta}
\newcommand{\F}{\mathbb{F}}
\renewcommand{\P}{\mathbb{P}}
\newcommand{\supp}{\mathrm{supp}}
\renewcommand{\l}{\ell}
\renewcommand{\a}{\underline{a}}
\renewcommand{\c}{\underline{c}}
\title{Classification of a class of planar quadrinomials}
\author{Chin Hei Chan\thanks{C. Chan is at the Dept. of Mathematics, Hong Kong University of Science and Technology, Clear Water Bay, Kowloon, Hong Kong (email: chchanam@connect.ust.hk).} and Maosheng Xiong\thanks{M. Xiong is at the Dept. of Mathematics, Hong Kong University of Science and Technology, Clear Water Bay, Kowloon, Hong Kong (email: mamsxiong@ust.hk). }}
\date{}
\begin{document}

\maketitle
\begin{abstract}
Let $p$ be an odd prime, $k,\l$ be positive integers, $q=p^k, Q=p^{\l}$. Following main ideas from \cite{Ding4}, we characterise planar functions of the form $f_{\c}(X)=c_0X^{qQ+q}+c_1X^{qQ+1}+c_2X^{Q+q}+c_3X^{Q+1}$ over $\F_{q^2}$ for any $\c=(c_0,c_1,c_2,c_3) \in \F_{q^2}^4$ in terms of linear equivalence. 

\end{abstract}
\section{Introduction}
\subsection{Background and motivation}

Let $p$ be a prime number, $k$ a positive integer, $q=p^k$, and $\F_{q}$ the finite field of order $q$. A function $F: \F_q \to \F_q$ is called \emph {planar} if all the equations
\begin{eqnarray} \label{1:pn} F(x+a)-F(x)=b, \qquad \forall a,b \in \F_q, a \ne 0\end{eqnarray}
have exactly one solution. In other words, for any $a \in \F_q^*:=\F_q \setminus \{0\}$, the function
\[D_a F(x)=F(x+a)-F(x),\]
called the \emph{derivative of $F$ in the direction of $a$}, is a permutation on $\F_q$.

In the above definition, only the addition operation is involved, so planar functions can also be defined on any finite dimensional vector space over $\F_p$.

Planar functions were originally introduced by Dembowski and Ostrom in the seminar paper \cite{Dem} in connection with projective planes in finite geometry. This notion coincides with that of perfect nonlinear (PN) functions in odd characteristic introduced by Nyberg \cite{Nyb} from cryptography. From this perspective, planar functions have the best differential uniformity, hence if used as S-boxes, they offer the best resistance to differential cryptanalysis, one of the most powerful attacks known today used against block ciphers. Besides their importance in cryptography, planar functions have applications in coding theory \cite{Car-2,YuanJ}, combinatorics \cite{Kanat,WengG1} and some engineering areas \cite{DingC1}. Planar functions also correspond to important algebraic and combinatorial structures such as commutative semifields \cite{Coulter,ZP}. All of these make the study of planar functions fruitful and important in a much broader context in mathematics and computer science. Planar functions together with almost perfect nonlinear (APN) functions in characteristic two have become a central topic in design theory, coding theory and cryptography (\cite{Blo,Car-B,DingC,Pot}).

Planar functions exist only for odd $q$. Currently there are less than 20 distinct infinite families of planar functions (see \cite{Hau} for a list and \cite{Chen1} for a recent construction of planar functions). One of the main reasons why new planar functions are so difficult to construct and analyze is that planar functions are classified up to a certain notion of equivalence, namely linear equivalence, EA-equivalence or CCZ-equivalence, and to show that a given planar function is equivalent or inequivalent to some known ones is usually rather difficult, most of such verification involves quite technical computation. For a flavor of the techniques, interested readers may refer to a recent work \cite{Shi}.

Now let $p$ be an odd prime, $k,\l$ be some positive integers, $q=p^k, Q=p^{\l}$, and $\F_{q^2}$ the finite field of order $q^2$. For any $\c:=(c_0,c_1,c_2,c_3) \in \F_{q^2}^4$, we define a quadrinomial $f_{\c}(X) \in \F_{q^2}[X]$ given by
\begin{equation}\label{DO}
f_{\c}(X)=c_0X^{qQ+q}+c_1X^{qQ+1}+c_2X^{Q+q}+c_3X^{Q+1}.
\end{equation}
In this paper we study planar functions from these $f_{\c}(X)$ for all $\c \in \F_{q^2}^4$. 

We remark that when $q$ is even, this class of quadrinomials $f_{\c}(X)$ and some variations have been studied extensively in the literature. In fact in this case these $f_{\c}(X)$'s have been the main object of study in more than 40 papers and by more than 60 authors, culminating in a complete understanding as to when $f_{\c}(X)$ is a permutation on $\F_{q^2}$ (see \cite{Ding4}, and the result is actually on functions slightly more general than $f_{\c}(X)$ ). In another perspective, $f_{\c}(X)$ satisfies the subfield property
$$f_{\c}(aX)=a^{Q+1}f_{\c}(X) \text{ for all }a \in \F_q.$$
By identifying $(x,y) \in \F_q^2$ with $X=x+\zeta y \in \F_{q^2}$ for a fixed element $\zeta \in \F_{q^2} \setminus \F_q$, one sees that the class of $f_{\c}(X)$ is linear equivalent to the class of $(Q,Q)$-biprojective functions $f^*(x,y) \in \F_q[x,y]^2$ given by
\begin{equation}\label{Biprojective}
f^*(x,y)=\left(a_0x^{Q+1}+a_1x^Q y+a_2xy^Q+a_3y^{Q+1},b_0x^{Q+1}+b_1x^Q y+b_2xy^Q+b_3y^{Q+1}\right).
\end{equation}
In this language, G\"{o}lo\u{g}lu also provided a complete characterization of permutation from $f_{\c}(X)$ under linear equivalence \cite{Faruk}, and a complete characterization of APN from $f_{\c}(X)$ under linear equivalence \cite{Faruk2}, hence giving a satisfactory answer to a question of Carlet \cite{Car-0}. We shall remark that many other properties of $f_{\c}(X)$ such as the Boomerang uniformity \cite{Li,Wu} can be easily derived from \cite{Ding4,Faruk}.

In view of all these papers and in particular of \cite{Car-0} and \cite{Faruk2}, it is natural for us to study and to characterize planar functions from $f_{\c}(X)$ for odd $q$.

\subsection{Statement of main results}

The main result of the paper is as follows.


\begin{theorem}\label{Main}
    Let $p$ be an odd prime, $k$ and $\l$ positive integers, $q=p^k$ and $Q=p^\l$. Consider the quadrinomial $f_{\c}(X)$ given in (\ref{DO}) for any $\c=(c_0,c_1,c_2,c_3) \in \F_{q^2}^4$. Then $f_{\c}(X)$ is planar over $\F_{q^2}$ if and only if it is linear equivalent to one of the polynomials listed below (which are either univariate over $\F_{q^2}$ or $(Q,Q)$-biprojective over $\F_q^2$):
    \begin{enumerate}
        \item $X^{Q+1}$, where $\frac{\l}{\gcd(k,\l)}$ is even;
        \item $X^{Q+q}$, where $\frac{k\l}{\gcd(k,\l)^2}$ is odd;
        \item $P_2(x,y)=(x^Q y, x^{Q+1}+\ep y^{Q+1})$, where $\frac{k}{\gcd(k,\l)}$ is odd and $\ep \in \F_q^*$ is a non-square.
        \end{enumerate}
    Moreover, if $k \mid \l$, then $f_{\c}(X)$ is planar if and only if it is linear equivalent to $X^2$.
\end{theorem}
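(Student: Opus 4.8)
The plan is to work throughout in the biprojective model. By the subfield property already noted in the introduction, $f_{\c}$ is linear equivalent to a $(Q,Q)$-biprojective map $f^{*}\colon \F_q^2 \to \F_q^2$ of the shape (\ref{Biprojective}), and since $f^{*}$ is a Dembowski--Ostrom (quadratic) map, planarity is equivalent to nondegeneracy of its associated symmetric bilinear form
\[ B\big((x,y),(a,b)\big)=f^{*}(x+a,y+b)-f^{*}(x,y)-f^{*}(a,b); \]
that is, $f_{\c}$ is planar iff for every $(a,b)\neq(0,0)$ the $\F_p$-linear map $(x,y)\mapsto B\big((x,y),(a,b)\big)$ is a bijection of $\F_q^2$. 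I would first record that the action of $\mathrm{GL}_2(\F_q)$ on the input pair $(x,y)$, together with $\mathrm{GL}_2(\F_q)$ on the output pair and the Frobenius $x\mapsto x^{p}$, are all instances of linear equivalence and preserve the biprojective shape; hence it suffices to classify planar $f^{*}$ up to this group.

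The core of the argument is to convert the nondegeneracy condition into the (non)vanishing of an explicit univariate polynomial over $\F_q$. Writing $\sigma$ for the Frobenius $z\mapsto z^{Q}$, each coordinate of $B\big(\cdot,(a,b)\big)$ is an $\F_q$-combination of $x,x^{\sigma},y,y^{\sigma}$ whose coefficients are $\F_q$-combinations of $a,a^{\sigma},b,b^{\sigma}$; thus $B\big(\cdot,(a,b)\big)$ is governed by a $2\times2$ matrix over the twisted ring generated by $\sigma$. Using the biprojective scaling to reduce the directions $(a,b)$ to $\P^1(\F_q)$ (the direction $b=0$ being treated separately), I would package the kernel condition into a single polynomial $\Phi_{\c}(z)$, with $z$ playing the role of the affine ratio $a/b$, so that $f^{*}$ is planar precisely when $\Phi_{\c}$ has no root $z\in\F_q$ (respectively no root satisfying the relevant norm condition, depending on the normalization). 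The number-theoretic side conditions in the theorem then emerge from the classical description of when a linearized polynomial built from $\sigma$ has trivial kernel, which is controlled by $\gcd(k,\l)$ and the parities of $k/\gcd(k,\l)$ and $\l/\gcd(k,\l)$.

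With this machinery in place I would carry out the normalization and case analysis: act by $\mathrm{GL}_2(\F_q)\times\mathrm{GL}_2(\F_q)$ to bring $\c$ into a short list of normal forms according to which leading coefficients vanish and how the two output quadratics interact, and for each normal form solve the condition $\Phi_{\c}\neq 0$. This should yield exactly three surviving families, which I would identify with the representatives $X^{Q+1}$, $X^{Q+q}$ and $P_2(x,y)=(x^{Q}y,\,x^{Q+1}+\ep y^{Q+1})$; the parity/gcd conditions for the two monomial cases are read off from the classical planarity criterion for Dembowski--Ostrom monomials, and the requirement that $\ep\in\F_q^{*}$ be a non-square in the third case is forced by demanding that $\Phi_{\c}$ avoid $\F_q$-rational roots. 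I expect the main obstacle to be completeness rather than sufficiency: showing that every genuinely biprojective planar $f^{*}$ collapses into case (3), and that no further inequivalent family survives, requires the most delicate bookkeeping, since one must rule out planarity in all intermediate degenerate configurations of $\c$ and must track the $\mathrm{GL}_2\times\mathrm{GL}_2$ orbits carefully to avoid double counting.

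Finally, the ``Moreover'' clause follows quickly once the classification is established. When $k\mid\l$ we have $Q=p^{\l}=q^{\l/k}$, so $\sigma$ acts as the identity on $\F_q$ and each representative degenerates into a genuine quadratic function of $\F_{q^2}$: reducing exponents modulo $q^2-1$ gives $X^{Q+1}\equiv X^2$ when $\l/k$ is even, while $X^{Q+q}\equiv X^{2q}=(X^2)^{q}$, which is the Frobenius twist of $X^2$ and hence linear equivalent to $X^2$ when $\l/k$ is odd; and $P_2$ becomes $(xy,\,x^2+\ep y^2)$, which upon identifying $\F_{q^2}=\F_q(\sqrt{\ep})$ is exactly the biprojective form of $X^2$. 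Thus in every case a planar $f_{\c}$ is linear equivalent to $X^2$, which is the assertion.
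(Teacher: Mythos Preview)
Your plan---work biprojectively, normalize by $\mathrm{GL}_2(\F_q)\times\mathrm{GL}_2(\F_q)$, and test each normal form---is a sensible strategy, and it is genuinely different from what the paper does. The paper instead studies the accompanying rational function $g_{\c}(X)=B_{\c}(X)/A_{\c}(X)$ on $\mu_{q+1}$ via its ramification data and the Hurwitz genus formula (the Ding--Zieve ``geometric method''), and from that produces a list of \emph{seven} linear-equivalence representatives (Theorem~\ref{AE2}), not three. Your $\mathrm{GL}_2\times\mathrm{GL}_2$ orbit analysis, carried out honestly, would produce a comparable list: besides the three survivors you name, there are normal forms such as $P_3(x,y)=(x^{Q+1}-x^Qy,\,xy^Q+\ep y^{Q+1})$ with $\ep\in\F_q^*\setminus\{-1\}$ and $f_2(X)=X^{Q+q}+\ep X^{Q+1}$ with $\ep\in\F_{q^2}^*\setminus\mu_{q+1}$, each depending on a free parameter.

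The real gap is the step ``for each normal form solve the condition $\Phi_{\c}\neq 0$''. For the two parametric families just mentioned, after exactly the linearized-kernel reduction you describe, non-planarity amounts to showing that a certain explicit rational function $R_\ep(t)$ \emph{is} a $(p^\delta-1)$-th power in $\F_q^*$ for at least one value of $t$, for every admissible $\ep$. This is not bookkeeping: the paper establishes it by a character-sum argument invoking the Weil bound (Lemma~\ref{2:weilbound}), and I do not see how your outline closes without an analytic input of that strength. You should also be aware that this Weil-bound step in the paper is carried out under the hypothesis $k\nmid\l$; the case $k\mid\l$ is handled by a separate direct computation (Lemma~\ref{2210}), so the logical dependence between the main classification and the ``Moreover'' clause is not quite as you have it.
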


We first remark that in Theorem \ref{Main}, since $f_{\c}(X)$ is a Dembowski-Ostrom (DO for short) polynomial, when $f_{\c}(X)$ is planar, the CCZ-equivalence, EA-equivalence and linear equivalence all coincide with each other \cite{Lilya1}. Since planar functions in odd characteristic are natural analogue of APN functions in characteristic two, Theorem \ref{Main} can be considered as both complementing and parallel to \cite[Theorem 1.1]{Faruk2}, which gave a complete classification of APN functions from the class of $f_{\c}(X)$ for even $q$ (by using the language of $(Q,Q)$-biprojective functions). Moreover, we remark that planar functions from Families 1)--3) are all known: $X^{Q+1}$ and $X^{Q+q}$ resemble the Albert family \cite{Albert}, and $P_2(x,y)$ is a subclass of the Zhou-Pott family \cite{ZP}. This means that no new planar function can be found in the family (\ref{DO}). This is similar to the case of even characteristic where it was shown in \cite{Faruk2} that no new APN can be found in this family as well.

Next, we explain the method we use in proving Theorem \ref{Main}. The polynomial $f_{\c}(X)$ can be written as $f_{\c}(X)=X^{Q+1} A\left(X^{q-1}\right)$ where \[A(X)=c_0X^{Q+1}+c_1X^{Q}+c_2X+c_3.\]
Define
\[B(X)=c_3^qX^{Q+1}+c_2^qX^{Q}+c_1^qX+c_0^q, \quad g(X)=B(X)/A(X). \]
It was well-known that permutation properties of $f_{\c}(X)$ are closely related to those of the accompanying rational function $g(X)$ defined over $\mu_{q+1}$, the set of $(q+1)$-th roots of unity in $\F_{q^2}$. Encompassing this idea, many techniques were developed to study this $g(X)$ over $\mu_{q+1}$ in the literature, most of the techniques were elementary but quite complex and involved a lot of computation. In a recent paper \cite{Ding4}, Ding and Zieve provided a new way of studying $g(X)$: they employed advanced tools such as the Hurwitz genus formula from arithmetic geometry to study geometric properties of $g(X)$ (i.e. the type of branch points and ramification indices) from which permutation properties of $f_{\c}(X)$ follow in some natural way. This powerful technique allowed them to resolve eight conjectures and open problems from the literature concerning $f_{\c}(X)$ for $q$ even and to cover most of the previous results. In proving Theorem \ref{Main}, we adopt their ideas to study $f_{\c}(X)$ for odd $q$. This is the main ingredient of the paper. While the geometric properties of $g(X)$ for a general $q$ including $q$ odd were essentially present in \cite{Ding4}, to prove Theorem \ref{Main}, however, we still need some additional information. In particular, we need a complete and detailed classification of $g(X)$ in terms of geometric conditions which we obtain by following the ideas of \cite{Ding4} closely, from which we obtain seven linear equivalence classes of $f_{\c}(X)$ under general conditions (see Theorem \ref{AE2}). We shall comment that the idea of linear equivalence was also hinted in the proof of \cite[Theorem 1.2]{Ding4}. The next step is to study these seven linear equivalence classes of polynomials and check which ones are planar functions. It turns out that there are only three planar functions up to linear equivalence as shown in Theorem \ref{Main}. While most of the cases are straightforward to check, the last two cases are much more complicated. We rely on the Weil bound of character sums over finite fields to rule them out as planar functions. 

We remark that in some parts of the paper we follow the presentation from \cite{Ding4}. 

The paper is organized as follows. In Section \ref{Pre} we introduce some universal notations and recall some background results that are needed for our proofs. In Section \ref{Cla} we give detailed geometric properties of the rational function $g(X)$ under consideration. Most results in this section were essentially contained in the proofs of \cite[Theorem 3.1]{Ding4}, but they were not written down explicitly, and it seems not easy to pin down exactly where they are in \cite{Ding4}. For the sake of readers, we write down the exact statement which will be required in the proof of Theorem \ref{Main} and produce a detailed proof following the ideas of \cite{Ding4}, with a focus on the case that $q$ is odd. Then in Section \ref{Cla-2} we classify $g(X)$ in terms of linear equivalence. In Section \ref{AEf} we derive a list of seven linear equivalence classes of $f_{\c}(X)$ under the classification of $g(X)$. Then in Section \ref{ProMain} we prove Theorem \ref{Main} by checking which functions among the seven linear equivalence representatives of $f_{\c}(X)$ are planar. We treat the first 5 easy cases in this section and leave the more complicated two representatives in Section \ref{Appen} {\bf Appendix}. In Section \ref{ProMain} we also study the special case when $k \mid \l$, which is also straightforward after all this preparation. Finally in Section \ref{Con} we conclude our paper.

\section*{Acknowledgment}
The main ideas of this paper mainly follow from \cite{Ding4}. The authors would like to thank Zhiguo Ding and Michael Zieve for bringing their attention to \cite{Ding4} in the first place which was not published at the time. More importantly, the authors would like to thank Ding and Zieve for generous sharing and in particular for their careful and illuminating explanation of some technical details of \cite{Ding4} before and during writing of this paper. 

The authors also want to express their gratitude to the anonymous referees who provided critical information regarding a previous version of the paper. Without their valuable feedback the paper in its current form would not be possible. C. Chan would like to thank the Department of Mathematics at Hong Kong University of Science and Technology for financial support. The research of M. Xiong was supported by RGC grant number 16307524 from Hong Kong.

\section{Preliminaries}\label{Pre}
Throughout this paper, we adopt the following notation:
\begin{itemize}
\item for any finite set $S$, $\# S$ is the cardinality of $S$;

\item for any field $K$, $K^*=K \setminus \{0\}$, $\P^1(K)= K \cup \{\infty\}$ is set of $K$-rational points in $\P^1$, and $\overline{K}$ is an algebraic closure of $K$;

\item $p$ is an (odd) prime, $k$ is a positive integer, $q=p^k$, $\F_q$ is the finite field of order $q$;

\item for any positive integer $d$, $\mu_d$ is the set of $d$-th roots of unity in $\overline{\F}_q$; 

\item for any $c \in \F_{q^2}$, $\bar{c}=c^q$; 


\end{itemize}

\subsection{Self-conjugate reciprocal polynomials}
Let $D(X) \in \F_{q^2}[X]$. Denote by $D^{(q)}(X)$ the polynomial in $\F_{q^2}[X]$ formed by taking $q$-th powers (or conjugates) on all the coefficients of $D(X)$. The \emph{conjugate reciprocal} of $D(X)$ is defined as
\[\widehat{D}(X):=X^{\deg D}D^{(q)}(1/X).\]

To be more precise, if $D(X)=\sum_{i=0}^r a_iX^i$ with $a_i \in \F_{q^2}$ for all $i$ and $a_r \neq 0$ where $r>0$, then $D^{(q)}(X)=\sum_{i=0}^r \bar{a}_iX^i$ and $\widehat{D}(X)=\sum_{i=0}^r \bar{a}_{r-i}X^i$. Note that if $D(0)=a_0=0$, then $\deg \widehat{D} < r=\deg D$. Otherwise if $D(0) \ne 0$ then we have $\deg \widehat{D}=\deg D$.


A nonzero polynomial $D(X) \in \F_{q^2}[X]$ is called \emph{self-conjugate reciprocal} (SCR for short) if $\widehat{D}(X)=\alpha D(X)$ for some $\alpha \in \F_{q^2}^*$. In particular this implies $D(0) \neq 0$.

The following about conjugate reciprocals and SCR polynomials are immediate from the above definitions:

\begin{lemma}\label{SCR}
All of the following hold:
\begin{itemize}

\item if $D(X) \in \F_{q^2}[X]$ is \emph{SCR} then $\widehat{D}(X)/D(X) \in \mu_{q+1}$;

\item if $D(X) \in \F_{q^2}[X]$ is nonzero and $\alpha \in \overline{\F}_q^*$, then the multiplicity of $\alpha$ as a root of $D(X)$ equals the multiplicity of $\alpha^{-q}$ as a root of $\widehat{D}(X)$;

\item $D(X) \in \F_{q^2}[X]$ is \emph{SCR} if and only if the multiset of roots of $D(X)$ is preserved by the function $\alpha \mapsto \alpha^{-q}$. In particular, if $\deg D=1$ then it is \emph{SCR} if and only if its unique root is in $\mu_{q+1}$;

\item if $\alpha \in \F_{q^2}^*$ and $\beta \in \F_q$ then $\alpha X^2+\beta X+\bar{\alpha}$ is \emph{SCR}.
\end{itemize}
\end{lemma}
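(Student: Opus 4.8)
The plan is to prove the four bullets in order, building on two elementary structural identities for the conjugate-reciprocal operation that I would record first. Writing $D(X)=\sum_{i=0}^r a_i X^i$ with $a_r\neq 0$, a direct bookkeeping on coefficients gives the scaling rule $\widehat{\lambda D}=\bar\lambda\,\widehat{D}$ for any $\lambda\in\F_{q^2}^*$ (scaling does not change the degree), and, when $D(0)=a_0\neq 0$ so that $\deg\widehat{D}=\deg D$, the involution identity $\widehat{\widehat{D}}=D$; the latter uses only $a_i^{q^2}=a_i$ for $a_i\in\F_{q^2}$. These two facts drive the first bullet: if $D$ is SCR, then by definition $\widehat{D}=\lambda D$ with $\lambda\in\F_{q^2}^*$ and $D(0)\neq 0$, and applying the conjugate reciprocal once more gives $D=\widehat{\widehat D}=\widehat{\lambda D}=\bar\lambda\widehat{D}=\bar\lambda\lambda\,D$, whence $\lambda^{q+1}=\lambda\bar\lambda=1$, i.e. $\lambda\in\mu_{q+1}$.

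For the second bullet I would track a single root through the two operations that compose to form $\widehat{D}$, namely applying Frobenius to the coefficients (producing $D^{(q)}$) and then reversing the coefficient order (the reciprocal). Factoring $D=a_r\prod_j(X-\beta_j)$ over $\overline{\F}_q$ and applying the coefficient-Frobenius $\sigma$, which is a ring homomorphism of $\overline{\F}_q[X]$ fixing $X$, yields $D^{(q)}=a_r^q\prod_j(X-\beta_j^q)$; hence the multiplicity of $\alpha$ as a root of $D$ equals the multiplicity of $\alpha^q$ as a root of $D^{(q)}$. The standard reciprocal identity then transfers a nonzero root $\gamma$ of $D^{(q)}$ of multiplicity $m$ to a root $\gamma^{-1}$ of $\widehat{D}$ of the same multiplicity, which comes from $\widehat{D}(X)=X^{\deg D}D^{(q)}(1/X)$ together with $(1/X-\gamma)^m=(-\gamma)^m X^{-m}(X-\gamma^{-1})^m$. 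Taking $\gamma=\alpha^q$ and composing gives exactly that the multiplicity of $\alpha$ in $D$ equals the multiplicity of $(\alpha^q)^{-1}=\alpha^{-q}$ in $\widehat{D}$.

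The third bullet follows from the second together with the observation that $\alpha\mapsto\alpha^{-q}$ is a bijection of $\overline{\F}_q^*$. For the forward direction, SCR forces $D(0)\neq 0$ and $\widehat{D}=\lambda D$, so $D$ and $\widehat{D}$ share the same multiset of roots; combined with the second bullet this gives, writing $m_D$ for multiplicity, $m_D(\alpha)=m_{\widehat D}(\alpha^{-q})=m_D(\alpha^{-q})$ for every $\alpha$, i.e. the multiset is invariant under $\alpha\mapsto\alpha^{-q}$. Conversely, invariance of the multiset forces $D(0)\neq 0$ (the map is undefined at $0$), hence $\deg\widehat D=\deg D$; the second bullet then yields $m_{\widehat D}(\gamma)=m_D(\gamma)$ for all $\gamma\in\overline{\F}_q^*$, so $D$ and $\widehat D$ have equal degree and identical root multisets and therefore differ by a nonzero scalar in $\F_{q^2}^*$, i.e. $D$ is SCR. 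The degree-one special case is immediate: the single root $\beta$ satisfies $\beta=\beta^{-q}$ exactly when $\beta^{q+1}=1$, i.e. $\beta\in\mu_{q+1}$. Finally, the fourth bullet is a one-line computation: for $D=\alpha X^2+\beta X+\bar\alpha$ with $\alpha\in\F_{q^2}^*$ and $\beta\in\F_q$, reversing the conjugated coefficients gives $\widehat D=\overline{\bar\alpha}\,X^2+\bar\beta X+\bar\alpha=\alpha X^2+\beta X+\bar\alpha=D$ (using $\alpha^{q^2}=\alpha$ and $\beta^q=\beta$), so $D$ is SCR with constant $1$.

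None of the steps is genuinely hard; the only place demanding care is the second bullet, where one must combine the coefficient-Frobenius (sending a root $\alpha$ to $\alpha^q$) with the reciprocal inversion (sending $\gamma$ to $\gamma^{-1}$) in the correct order to land on the map $\alpha\mapsto\alpha^{-q}$ rather than its inverse, and to confirm that multiplicities are preserved at each stage. Keeping explicit track of $\deg\widehat D=\deg D$ under the hypothesis $D(0)\neq 0$ is also what makes the ``equal degree plus equal root multiset implies scalar multiple'' argument in the third bullet valid.
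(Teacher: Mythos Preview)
Your proof is correct. The paper itself does not give a proof of this lemma, stating only that the assertions ``are immediate from the above definitions''; your argument supplies exactly the routine verifications one would expect, and each step (the involution identity $\widehat{\widehat{D}}=D$ when $D(0)\neq 0$, the factorization tracking through Frobenius-on-coefficients and coefficient reversal, and the final direct computation) is valid.
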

There are simple conditions describing the nature of the roots of a degree-2 SCR polynomial, but the situation is quite different for $p=2$ and for $p \ge 3$ being odd. Here we focus on the case $p \ge 3$. Interested readers may refer to \cite[Lemma 2.4]{Ding4} for the case $p=2$.
\begin{lemma}\label{lem1}
    Assume $p$ is odd and $D(X)=\alpha X^2+\beta X+\bar{\alpha}$ with $\alpha \in \F_{q^2}$ and $\beta \in \F_q$ not both zero. Define $\Delta(D):=\beta^2-4\alpha\bar{\alpha}$. Then the following hold:
    \begin{enumerate}
        \item $D(X)$ has a multiple root (which must be in $\mu_{q+1}$) if and only if $\Delta(D) = 0$;
        \item $D(X)$ has two distinct roots in $\mu_{q+1}$ if and only if $\Delta(D)$ is a non-square in $\F_q^*$;
        \item $D(X)$ has no roots in $\mu_{q+1}$ if and only if $\Delta(D)$ is a square in $\F_q^*$.
    \end{enumerate}
\end{lemma}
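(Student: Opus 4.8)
The plan is to analyze the quadratic $D(X)=\alpha X^2+\beta X+\bar\alpha$ directly via its discriminant, tracking where its roots live. First I would dispose of the degenerate possibility: if $\alpha=0$ then $\beta\ne 0$, $D$ is linear with root $0\notin\mu_{q+1}$, and $\Delta(D)=\beta^2$ is a nonzero square in $\F_q^*$, consistent with case~(3); so henceforth assume $\alpha\ne 0$, a genuine quadratic. Then $D$ has a multiple root iff its discriminant $\Delta(D)=\beta^2-4\alpha\bar\alpha$ vanishes, giving (1); and by the last bullet of Lemma~\ref{SCR}, $D$ is SCR, so by the third bullet of that lemma any multiple root $\rho$ satisfies $\rho=\rho^{-q}$, i.e.\ $\rho^{q+1}=1$, placing it in $\mu_{q+1}$ automatically. (One can also see this concretely: a double root equals $-\beta/(2\alpha)$, and $\Delta(D)=0$ forces $\beta^2=4\alpha\bar\alpha$, whence $(-\beta/(2\alpha))^{q+1}=\beta^{q+1}/(4\alpha^{q+1}\cdot\alpha^{q-1}\cdot\alpha^{1-q})$... cleaner is just to note $\beta\in\F_q$ so $\beta^{q+1}=\beta^2=4\alpha^{q+1}$, giving $(\beta/(2\alpha))^{q+1}=1$.)

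Now assume $\Delta(D)\ne 0$, so $D$ has two distinct roots $\rho_1,\rho_2$ in $\overline{\F}_q$ with $\rho_1\rho_2=\bar\alpha/\alpha\in\mu_{q+1}$ (as $(\bar\alpha/\alpha)^{q+1}=\alpha^{q(q-1)}\cdot\alpha^{q-1}\cdot\ldots$; more simply $\overline{(\bar\alpha/\alpha)}=\alpha/\bar\alpha=(\bar\alpha/\alpha)^{-1}$, so the norm is $1$) and $\rho_1+\rho_2=-\beta/\alpha$. Since $D$ is SCR, the map $\rho\mapsto\rho^{-q}$ permutes $\{\rho_1,\rho_2\}$: either it fixes both, which forces $\rho_1^{q+1}=\rho_2^{q+1}=1$, i.e.\ both roots in $\mu_{q+1}$ (case~2); or it swaps them, giving $\rho_1^{-q}=\rho_2$, hence $\rho_1^{q+1}=\rho_1\rho_2=\bar\alpha/\alpha$ and then $\rho_1^{(q+1)(q-1)}=(\bar\alpha/\alpha)^{q-1}$, and iterating shows $\rho_1\notin\mu_{q+1}$ unless we are in the previous case (case~3). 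So the trichotomy on the location of the roots matches (1)--(3); it remains to match it to whether $\Delta(D)$ is zero, a nonzero square, or a nonsquare in $\F_q^*$. The key point is that $\Delta(D)=\beta^2-4\alpha\bar\alpha\in\F_q$ (since $\beta\in\F_q$ and $\alpha\bar\alpha=N(\alpha)\in\F_q$), so the roots lie in $\F_q(\sqrt{\Delta(D)})$, which is $\F_q$ itself if $\Delta(D)$ is a square in $\F_q^*$ and $\F_{q^2}$ if it is a nonsquare.

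For case~(2), the cleanest route is: $\rho_1,\rho_2\in\mu_{q+1}$ with $\rho_1\ne\rho_2$ iff both lie in $\F_{q^2}$ but the map $\rho\mapsto\rho^q$ sends $\rho_i$ to $\rho_i^{-1}=\rho_j$; if additionally $\rho_1\in\F_q$ then $\rho_1\in\mu_{q+1}\cap\F_q=\mu_{\gcd(q+1,q-1)}=\{\pm1\}$, and one checks the two roots cannot both be forced into $\F_q$ while being distinct and in $\mu_{q+1}$ unless $\{\rho_1,\rho_2\}=\{1,-1\}$, a case one handles by hand ($\Delta=4\alpha\bar\alpha\cdot(\text{something})$ — actually $\rho_1\rho_2=-1=\bar\alpha/\alpha$ and $\rho_1+\rho_2=0$ forces $\beta=0$, so $\Delta(D)=-4\alpha\bar\alpha=-4N(\alpha)$, which is a nonsquare in $\F_q^*$ precisely when $-1$ times a square is a nonsquare, i.e.\ when $q\equiv3\pmod 4$, matching $\{1,-1\}\subset\mu_{q+1}$ requiring $4\mid q+1$). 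The slicker argument avoids this casework: having two distinct roots in $\mu_{q+1}$ means in particular not both in $\F_q$ (since $\mu_{q+1}\cap\F_q$ has at most $2$ elements and forcing both there is the knife-edge $\{\pm1\}$ situation one just computed), hence the roots generate $\F_{q^2}$, hence $\Delta(D)$ is a nonsquare in $\F_q^*$; conversely a nonsquare $\Delta(D)$ puts the roots in $\F_{q^2}\setminus\F_q$, and SCR forces $\rho\mapsto\rho^q$ to be the swap, giving $\rho_i^{q+1}=\rho_i\rho_j\in\mu_{q+1}$... wait, one needs $\rho_i^{q+1}=1$, which follows since $\rho_i^{q+1}=\rho_i^q\rho_i=\rho_j\rho_i=\bar\alpha/\alpha$ has norm $1$ but we want it equal to $1$: use instead that $\rho_i^q=\rho_j$ and $\rho_j^q=\rho_i$ so $\rho_i^{q^2}=\rho_i$ (automatic) — better: from $\widehat D=cD$ the root multiset is $\{\rho^{-q}\}$, so $\{\rho_1^{-q},\rho_2^{-q}\}=\{\rho_1,\rho_2\}$, and since the roots are conjugate over $\F_q$, $\rho_2=\rho_1^q$, giving $\rho_1^{-q^2}=\rho_1$ or $\rho_1^{-q}=\rho_1$; the latter is $\rho_1^{q+1}=1$. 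I would write this last deduction carefully. Case~(3) is then the complement: $\Delta(D)$ a nonzero square puts both roots in $\F_q$, hence not in $\mu_{q+1}$ unless a root is $\pm1$ — which, by Lemma~\ref{SCR}'s last bullet on degree-$1$ factors, would make $X\mp1$ an SCR factor and one checks this is incompatible with $\Delta(D)$ being a square (it would force $\Delta$ into the $\{\pm1\}$ regime above, a nonsquare); I would spell out this last incompatibility. The main obstacle is exactly this bookkeeping at the boundary $\{\rho_1,\rho_2\}\subseteq\{1,-1\}$, where ``square vs.\ nonsquare'' interacts with $q\bmod 4$; everything else is a direct discriminant computation combined with the SCR structure from Lemma~\ref{SCR}.
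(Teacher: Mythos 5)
Your opening moves --- disposing of $\alpha=0$, handling the multiple-root case via the SCR structure, and observing that $\rho_1\rho_2=\bar{\alpha}/\alpha\in\mu_{q+1}$ forces both roots or neither to lie in $\mu_{q+1}$ --- match the paper and are fine. The gap is in the central step connecting membership in $\mu_{q+1}$ to the quadratic character of $\Delta(D)$. You assert that the roots lie in $\F_q(\sqrt{\Delta(D)})$, so that a square discriminant puts them in $\F_q$ while a nonsquare one makes them Frobenius-conjugate with $\rho_2=\rho_1^q$. This is false: the roots are $(-\beta\pm\te)/(2\alpha)$ with $\te^2=\Delta(D)$, and since the leading coefficient $\alpha$ lies in $\F_{q^2}$ but generally not in $\F_q$, the roots lie in $\F_{q^2}$ regardless of whether $\Delta(D)$ is a square in $\F_q$, and $\rho_1^q=(-\beta+\bar{\te})/(2\bar{\alpha})$ equals $\rho_2$ only when $\alpha\in\F_q$. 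A concrete counterexample to your implication ``two distinct roots in $\mu_{q+1}$ implies the roots generate $\F_{q^2}$'': over $\F_9=\F_3(i)$ with $i^2=-1$, take $D(X)=iX^2-i$; its roots $\pm 1$ are distinct, lie in $\mu_4$ and also in $\F_3$, yet $\Delta(D)=-4i\cdot i^3=-4=2$ is a nonsquare in $\F_3^*$. Your patch for the boundary case $\{\rho_1,\rho_2\}=\{1,-1\}$ is also wrong on two counts: $\{1,-1\}\subseteq\mu_{q+1}$ for every odd $q$ (not only when $4\mid q+1$), and in that configuration $\alpha^{q-1}=-1$ forces $\alpha\bar{\alpha}=-\alpha^2$ with $\alpha^2$ a nonsquare in $\F_q^*$, so $\Delta(D)=4\alpha^2$ is always a nonsquare --- there is no dependence on $q\bmod 4$.

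What actually closes the argument is the explicit norm computation that the paper performs: with $\gamma=-(\beta+\te)/(2\alpha)$ one has
\begin{equation*}
\gamma^{q+1}=\frac{(\beta+\te)(\beta+\bar{\te})}{4\alpha\bar{\alpha}},
\end{equation*}
and using $\beta^2-\te^2=4\alpha\bar{\alpha}$ this equals $1$ precisely when $\bar{\te}=-\te$, i.e.\ precisely when $\te\notin\F_q$, i.e.\ when $\Delta(D)$ is a nonsquare in $\F_q^*$. None of the field-of-definition or Frobenius-conjugacy language in your write-up survives the fact that $D$ has coefficients in $\F_{q^2}$ rather than $\F_q$; you need this norm identity (or an equivalent computation) to finish cases (2) and (3).
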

This result is quite elementary, since we cannot find it in the literature, for the sake of completeness, we provide a proof here.
\begin{proof}
If $\alpha=0$ and $\beta \neq 0$, then $D(X)=\beta X$ is a degree-one polynomial, with a unique root 0, which is not in $\mu_{q+1}$. We also have $\Delta(D)=\beta^2$ being a square in $\F_q^*$. So 3) applies to this case.

Hence from now on we assume $\alpha \neq 0$. By Lemma \ref{SCR}, $D(X)$ is a degree-2 SCR polynomial. It is clear that $D$ has a multiple root if and only if $\Delta(D)=0$, and by Lemma \ref{SCR}, since its multiset of roots is preserved by the function $\gamma \mapsto \gamma^{-q}$, this multiple root must satisfy $\gamma=\gamma^{-q}$, that is, $\gamma \in \mu_{q+1}$.

Now suppose $\Delta(D) \neq 0$. Then $D(X)$ has two distinct roots $\gamma,\delta$. Note that $\gamma\delta=\frac{\bar{\alpha}}{\alpha} \in \mu_{q+1}$, so either both $\gamma,\delta$ are in $\mu_{q+1}$, or none of them are in $\mu_{q+1}$. Noting that $\Delta(D) \in \F_q^*$, there is an $\theta \in \F_{q^2}^*$ such that $\theta^2=\Delta(D)$. Easy to see that we have either $\bar{\te}=\te$ or $-\te$, according to whether $\te \in \F_q^*$ or not. We may take $\gamma=-\frac{\beta+\te}{2\alpha}$. We see that $\gamma \in \mu_{q+1}
$ if and only if $\gamma^{q+1}=1$, that is,
$$\left(\frac{\beta+\te}{2\alpha}\right)\left(\frac{\beta+\bar{\te}}{2\bar{\alpha}}\right)=1.$$
This can be further simplified as
$$\beta^2+\beta(\te+\bar{\te})+\te\bar{\te}=4\alpha\bar{\alpha},$$
and again
$$\beta(\te+\bar{\te})+\te\bar{\te}+\te^2=0.$$
Taking $\bar{\te}=\ep\te$ where $\ep \in \{ \pm 1\}$ we have
$$\te(1+\ep)(\beta+\te)=0.$$
Since $\te \neq 0$ and $\beta+\te \neq 0$ as $\gamma \neq 0$, this implies that $\ep=-1$, that is $\theta \notin \F_{q}^*$. Hence we conclude that in this case $\gamma, \delta \in \mu_{q+1}$ if and only if $\te \notin \F_q^*$. This completes the proof of Lemma \ref{lem1}.
\end{proof}

\subsection{Rational Functions}
Let $K$ be a field and $G(X)=N(X)/D(X)$ be a rational function in $K$ where $N,D \in K[X]$ and $D$ is nonzero. Let $C(X)=\gcd(N(X),D(X))$, the monic greatest common divisor of $N(X)$ and $D(X)$ in $K[X]$. We write $N(X)=C(X)N_0(X), D(X)=C(X)D_0(X)$ with $N_0, D_0 \in K[X]$, so that $\gcd(N_0(X),D_0(X))=1$. We identify $G(X)$ with $G_0(X)=N_0(X)/D_0(X)$ and view $G(X)$ as the function $\P^1(K) \to \P^1(K)$ defined by $\alpha \mapsto G_0(\alpha)$, so $G(X)$ is also well-defined at elements $\alpha \in K$ even if $N(\alpha)=D(\alpha)=0$. We refer to $N_0$ and $D_0$ as the numerator and denominator of $G$ respectively, and define the degree of $G$ as $\deg G=\max\{\deg N_0,\deg D_0\}$ if $G(X) \neq 0$. $G$ is called \emph{separable} if the field extension $K(x)/K(G(x))$ is a separable extension of function fields where $x$ is transcendental over $K$. In fact, $G$ is separable if and only if $G'(X) \neq 0$ if and only if $G \notin K(X^p)$ where $p=\mathrm{char}(K)$ \cite[Lemma 2.2]{Ding5}.

We say non-constant $F,G \in K(X)$ are \emph{linear equivalent over $K$} (or \emph{$K$-linear equivalent} for short) if there are degree-one $\rho,\sigma \in K(X)$ such that $G=\rho \circ F \circ \sigma$.

The following are results about degree-one rational functions over $\F_{q^2}$ satisfying certain properties \cite{Ding4,Zieve}. These are very useful when we study geometric properties of the accompanying function $g_{\c}(X)$.
\begin{lemma}\label{mu}
A degree-one $\rho(X) \in \F_{q^2}(X)$ permutes $\mu_{q+1}$ if and only if $\rho(X)=(\bar{\beta}X+\bar{\alpha})/(\alpha X+\beta)$ for some $\alpha,\beta \in \F_{q^2}$ with $\alpha\bar{\alpha} \neq \beta\bar{\beta}$.
\end{lemma}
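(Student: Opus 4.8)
The plan is to establish the two implications separately: the ``if'' direction by a direct computation on $\mu_{q+1}$, and the ``only if'' direction by exploiting the Frobenius-semilinear involution of $\P^1(\F_{q^2})$ whose fixed locus is exactly $\mu_{q+1}$.

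For the ``if'' direction, suppose $\rho(X)=(\bar\beta X+\bar\alpha)/(\alpha X+\beta)$ with $\alpha\bar\alpha\neq\beta\bar\beta$. The matrix $\left(\begin{smallmatrix}\bar\beta&\bar\alpha\\\alpha&\beta\end{smallmatrix}\right)$ has determinant $\beta\bar\beta-\alpha\bar\alpha\neq0$, so $\rho$ has degree one and is a bijection of $\P^1(\F_{q^2})$. I would first note that $\rho$ sends no element of $\mu_{q+1}$ to $0$ or $\infty$: if $\alpha\gamma+\beta=0$ or $\bar\beta\gamma+\bar\alpha=0$ for some $\gamma\in\mu_{q+1}$ then $\gamma^{q+1}$ would equal $\beta^{q+1}/\alpha^{q+1}$ or $\alpha^{q+1}/\beta^{q+1}$, impossible since $\gamma^{q+1}=1$ and $\alpha\bar\alpha\neq\beta\bar\beta$ (the degenerate cases $\alpha=0$ or $\beta=0$ being immediate). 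Then for $\gamma\in\mu_{q+1}$, using $\gamma^q=\gamma^{-1}$ one computes
\[\rho(\gamma)^q=\Bigl(\frac{\bar\beta\gamma+\bar\alpha}{\alpha\gamma+\beta}\Bigr)^q=\frac{\beta\gamma^{-1}+\alpha}{\bar\alpha\gamma^{-1}+\bar\beta}=\frac{\alpha\gamma+\beta}{\bar\beta\gamma+\bar\alpha}=\rho(\gamma)^{-1},\]
so $\rho(\gamma)\in\mu_{q+1}$. Hence $\rho$ maps $\mu_{q+1}$ injectively into itself, and since $\#\mu_{q+1}=q+1$ is finite, $\rho$ permutes $\mu_{q+1}$.

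For the ``only if'' direction, introduce the map $\iota\colon\P^1(\F_{q^2})\to\P^1(\F_{q^2})$, $\gamma\mapsto\gamma^{-q}$ (with $0\leftrightarrow\infty$); its fixed-point set is precisely $\mu_{q+1}$ (cf.\ Lemma \ref{SCR}). Write $\rho(X)=(aX+b)/(cX+d)$ with $ad-bc\neq0$. The compositions $\rho\circ\iota=(bX^q+a)/(dX^q+c)$ and $\iota\circ\rho=(\bar cX^q+\bar d)/(\bar aX^q+\bar b)$ are both Frobenius-semilinear fractional maps, hence each equals a genuine degree-one map precomposed with the bijection $X\mapsto X^q$ of $\P^1(\F_{q^2})$; consequently two such maps that agree on three distinct points must coincide. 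Since $\rho$ permutes $\mu_{q+1}$ and $\iota$ fixes $\mu_{q+1}$ pointwise, both $\rho\circ\iota$ and $\iota\circ\rho$ send every $\gamma\in\mu_{q+1}$ to $\rho(\gamma)$; as $\#\mu_{q+1}=q+1\ge3$, I conclude $\rho\circ\iota=\iota\circ\rho$. Comparing the two degree-one maps in the variable $X^q$ shows the coefficient data are proportional, giving $\bar c=\lambda b$, $\bar d=\lambda a$, $\bar a=\lambda d$, $\bar b=\lambda c$ for some $\lambda\in\F_{q^2}^*$; conjugating and combining two of these relations (and checking the few degenerate subcases) forces $\lambda\bar\lambda=1$. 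Finally, rescaling $(a,b,c,d)\mapsto(\nu a,\nu b,\nu c,\nu d)$ replaces $\lambda$ by $\nu^{q-1}\lambda$, and since $\nu\mapsto\nu^{q-1}$ maps $\F_{q^2}^*$ onto $\mu_{q+1}$ we may normalize to $\lambda=1$; then $a=\bar d$ and $b=\bar c$, so setting $\alpha:=c$, $\beta:=d$ gives $\rho(X)=(\bar\beta X+\bar\alpha)/(\alpha X+\beta)$ with $ad-bc=\beta\bar\beta-\alpha\bar\alpha\neq0$, as required.

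The step I expect to be the main obstacle is the ``only if'' direction, and within it the reduction to the commutation identity $\rho\circ\iota=\iota\circ\rho$ (the alternative would be a more laborious counting of the setwise stabilizer of $\mu_{q+1}$ in $\mathrm{PGL}_2(\F_{q^2})$ against the number of maps of the stated form), together with the careful handling of degenerate cases and the scalar normalization via $\nu\mapsto\nu^{q-1}$.
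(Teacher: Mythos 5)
The paper does not prove this lemma at all: it is quoted as a known result with a citation to \cite{Ding4,Zieve}, so there is no in-paper argument to compare against. Your proposal is a correct, self-contained proof. The ``if'' direction is the standard computation $\rho(\gamma)^q=\rho(\gamma)^{-1}$ for $\gamma\in\mu_{q+1}$, with the non-vanishing of numerator and denominator on $\mu_{q+1}$ correctly reduced to $\alpha\bar\alpha\neq\beta\bar\beta$. For the ``only if'' direction, your commutation argument is sound: $\rho\circ\iota$ and $\iota\circ\rho$ are both M\"obius maps precomposed with the bijection $X\mapsto X^q$, they agree on the $q+1\geq 4$ points of $\mu_{q+1}$, hence coincide, and the resulting proportionality of coefficient matrices together with $\lambda\bar\lambda=1$ and the normalization via the surjection $\nu\mapsto\nu^{q-1}$ of $\F_{q^2}^*$ onto $\mu_{q+1}$ yields exactly the claimed form. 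This is cleaner than the counting argument usually given in the literature (comparing the order of the setwise stabilizer of $\mu_{q+1}$ in $\mathrm{PGL}_2(\F_{q^2})$, which is $q(q^2-1)$ by sharp $3$-transitivity, with the number of maps of the stated form): the counting route avoids the semilinear-map bookkeeping but requires verifying that the stated maps already form a group of the right order, whereas your route produces the explicit form of $\rho$ directly from the single identity $\rho\circ\iota=\iota\circ\rho$. The only points worth writing out in full are the degenerate subcases in deriving $\lambda\bar\lambda=1$ (e.g.\ $a=d=0$), which you flag and which do go through.
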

\begin{lemma}\label{P1}
A degree-one $\rho(X) \in \F_{q^2}(X)$ maps $\mu_{q+1}$ onto $\P^1(\F_q)$ if and only if $\rho(X)=(\delta X+\gamma\bar{\delta})/(X+\gamma)$ for some $\gamma \in \mu_{q+1}$ and $\delta \in \F_{q^2} \setminus \F_q$.
\end{lemma}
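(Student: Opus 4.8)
The plan is to exploit two facts. First, both $\mu_{q+1}$ and $\P^1(\F_q)$ have exactly $q+1$ elements, and a degree-one $\rho$ is a bijection of $\P^1(\overline{\F}_q)$; hence $\rho$ maps $\mu_{q+1}$ \emph{onto} $\P^1(\F_q)$ if and only if it maps $\mu_{q+1}$ \emph{into} $\P^1(\F_q)$. Second, the two conditions defining these sets have clean algebraic descriptions via the conjugation $c\mapsto\bar c=c^q$: an element $x\in\F_{q^2}^*$ lies in $\mu_{q+1}$ exactly when $\bar x=x^{-1}$, while a value $v\in\P^1(\F_{q^2})$ lies in $\P^1(\F_q)$ exactly when $\bar v=v$. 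Thus the whole statement reduces to understanding when $\overline{\rho(x)}=\rho(x)$ for every $x\in\mu_{q+1}$, and this is the criterion I would use throughout.

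For sufficiency I would take $\rho(X)=(\delta X+\gamma\bar\delta)/(X+\gamma)$ with $\gamma\in\mu_{q+1}$ and $\delta\notin\F_q$, and first record that its determinant $\gamma(\delta-\bar\delta)$ is nonzero, so $\rho$ is genuinely degree one. Then for $x\in\mu_{q+1}$ I would compute $\overline{\rho(x)}=(\bar\delta\,\bar x+\bar\gamma\,\delta)/(\bar x+\bar\gamma)$, substitute $\bar x=x^{-1}$ and $\bar\gamma=\gamma^{-1}$, and clear denominators by multiplying through by $\gamma x$; this returns exactly $(\delta x+\gamma\bar\delta)/(x+\gamma)=\rho(x)$, so $\rho(x)\in\P^1(\F_q)$ (the single pole $x=-\gamma$, which lies in $\mu_{q+1}$ since $q$ is odd, maps to $\infty$). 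By the cardinality remark the map is then onto.

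For necessity I would start from the pole. Since $\rho$ is a bijection of $\P^1$ sending $\mu_{q+1}$ onto $\P^1(\F_q)\ni\infty$, there is a unique $x_0\in\mu_{q+1}$ with $\rho(x_0)=\infty$; as $x_0$ is finite, the denominator of $\rho$ is nonconstant and its root is $x_0$. After scaling I may therefore write $\rho(X)=(aX+b)/(X+\gamma)$ with $\gamma:=-x_0$, and $\gamma\in\mu_{q+1}$ because $x_0\in\mu_{q+1}$ and $q$ is odd. Imposing $\overline{\rho(x)}=\rho(x)$ and clearing denominators as above yields the identity $\bar b\,\gamma\,x+\bar a\,\gamma=a x+b$ at every $x\in\mu_{q+1}\setminus\{x_0\}$; this is a polynomial of degree at most one vanishing at $q\ge 2$ points, so it is identically zero, forcing $a=\bar b\,\gamma$ (the companion relation $b=\bar a\,\gamma$ being equivalent). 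Setting $\delta:=a$ gives $b=\gamma\bar\delta$ and hence the claimed form, while degree-one-ness ($\gamma(\delta-\bar\delta)\neq 0$) forces $\delta\notin\F_q$.

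The main obstacle is the necessity direction, and within it two steps carry the weight: locating the pole inside $\mu_{q+1}$ so as to fix the shape of the denominator, and converting the requirement that the values lie in $\F_q$ into an exact relation on the coefficients. The device that makes the second step clean is to transform the pointwise conjugation condition into a single low-degree polynomial identity that holds at more points than its degree, so that coefficient comparison is forced; verifying that $\mu_{q+1}\setminus\{x_0\}$ still contains at least two points (true since $q\ge 3$) is what legitimizes this comparison.
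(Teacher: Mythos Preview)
Your proof is correct. Note, however, that the paper does not supply its own proof of this lemma; it is quoted from the literature (the references given just before the statement) without argument. Your write-up therefore stands on its own: the sufficiency computation is a direct conjugation check, and for necessity your two key moves---locating the unique pole inside $\mu_{q+1}$ to normalize the denominator as $X+\gamma$ with $\gamma\in\mu_{q+1}$, and then converting the condition $\overline{\rho(x)}=\rho(x)$ into a degree-one polynomial with at least $q$ roots---are exactly the natural ones and go through cleanly. There is no proof in the paper to compare against.
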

Given $D(X) \in \F_{q^2}[X]$ and $r \ge 0$, define $G_0(X):=X^rD(X)^{q-1}$. It is easy to see that $G_0(\mu_{q+1}) \subset \mu_{q+1} \cup \{0\}$. Moreover, $G_0(\mu_{q+1}) \subset \mu_{q+1}$ if and only if $0 \notin D(\mu_{q+1})$, and under this situation $G_0$ induces the same function as the rational function $G(X):=X^rD^{(q)}(1/X)/D(X)$ on $\mu_{q+1}$. While $G(X)$ is usually defined over $\F_{q^2}$, it can be transformed into a  rational function defined over $\F_q$ as follows:
\begin{lemma}\label{P2}\cite[Lemma 2.11]{Ding4}
    Let $G(X)=X^rD^{(q)}(1/X)/D(X)$ for some $D(X) \in \F_{q^2}[X]$ and $r \ge 0$. Let $\rho,\sigma \in \F_{q^2}(X)$ be any degree-one rational functions mapping $\mu_{q+1}$ onto $\P^1(\F_q)$, and define
        \[h=\rho \circ g \circ \sigma^{-1}.\]
    Then we have $h(X) \in \F_q(X)$.
\end{lemma}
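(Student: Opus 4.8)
The plan is to prove the stronger statement that $h$ is fixed by the coefficientwise $q$-th power map $R\mapsto R^{(q)}$ on rational functions, which is exactly the condition $h\in\F_q(X)$ (the reduced form of $h$, normalised to have monic denominator, is unique, so $h^{(q)}=h$ forces every coefficient to be fixed by $a\mapsto a^q$). Write $\iota(X):=1/X$, a degree-one rational function over $\F_q$ with $\iota\circ\iota=\mathrm{id}$. The first ingredient I would establish is the formal identity
\[
G^{(q)} \;=\; \iota\circ G\circ\iota .
\]
This is immediate once one checks that $G(1/X)\cdot G^{(q)}(X)=1$ as rational functions: since the coefficients of $D$ lie in $\F_{q^2}$ we have $G^{(q)}(X)=X^{r}D^{(q^2)}(1/X)/D^{(q)}(X)=X^{r}D(1/X)/D^{(q)}(X)$, while $G(1/X)=X^{-r}D^{(q)}(X)/D(1/X)$, and the product of these two expressions is $1$. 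This is a genuine identity of rational functions, hence unaffected by any common factor of $X^{r}D^{(q)}(1/X)$ and $D(X)$.

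The second ingredient is the analogous identity for the degree-one maps. Using Lemma \ref{P1} I would write $\rho(X)=(\delta X+\gamma\bar\delta)/(X+\gamma)$ with $\gamma\in\mu_{q+1}$ and $\delta\in\F_{q^2}\setminus\F_q$; applying the $q$-th power to the coefficients and using $\bar\gamma=\gamma^{-1}$ gives, after clearing $\gamma$,
\[
\rho^{(q)}(X) \;=\; \frac{\gamma\bar\delta X+\delta}{\gamma X+1} \;=\; \rho(1/X),
\]
so that $\rho^{(q)}=\rho\circ\iota$. The identical computation applies to $\sigma$, and since coefficientwise conjugation commutes with inversion of degree-one maps, $(\sigma^{-1})^{(q)}=(\sigma^{(q)})^{-1}=(\sigma\circ\iota)^{-1}=\iota\circ\sigma^{-1}$.

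Finally, because $R\mapsto R^{(q)}$ commutes with composition, I would combine the two ingredients:
\[
h^{(q)} \;=\; \rho^{(q)}\circ G^{(q)}\circ(\sigma^{-1})^{(q)}
\;=\; (\rho\circ\iota)\circ(\iota\circ G\circ\iota)\circ(\iota\circ\sigma^{-1})
\;=\; \rho\circ G\circ\sigma^{-1} \;=\; h,
\]
where the penultimate step uses $\iota\circ\iota=\mathrm{id}$ twice. Hence $h^{(q)}=h$, so $h\in\F_q(X)$.

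I do not anticipate a real obstacle here: the argument reduces to the two elementary identities above, each a short computation. The only points deserving care are reading $G^{(q)}=\iota\circ G\circ\iota$ strictly as an equality of rational functions (so that cancellation in $G$ is irrelevant), and observing that the degenerate possibility that $G$, and hence $h$, is constant is harmless — such an $h$ equals $\rho(c)$ for some $c\in\mu_{q+1}$, which lies in $\P^1(\F_q)$ by Lemma \ref{P1}, and the displayed computation applies verbatim.
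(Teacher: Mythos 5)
Your argument is correct and complete: the identity $G^{(q)}=\iota\circ G\circ\iota$ (as an equality in $\F_{q^2}(X)$, so cancellation is irrelevant), the computation $\rho^{(q)}=\rho\circ\iota$ from the explicit form in Lemma \ref{P1}, and the fact that coefficientwise conjugation is an automorphism commuting with composition together give $h^{(q)}=h$, which is exactly the Galois-descent criterion for $h\in\F_q(X)$. The paper itself gives no proof here (the lemma is quoted from \cite[Lemma 2.11]{Ding4}), and your descent argument is the standard one underlying that reference, so there is nothing to add beyond noting that the ``$g$'' in the statement should read ``$G$''.
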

\subsection{Branch points and ramification}
Here we introduce the concepts of branch points and ramification, which are the main tools to describe the geometric properties of the  accompanying rational function $g_{\c}(X)$ later on.

For a non-constant rational function $G(X) \in \overline{\F}_q(X)$, write $G(X)=N(X)/D(X)$ where $N(X), D(X) \in \overline{\F}_q[X]$ with $\gcd(N(X),D(X))=1$. For any $\alpha \in \overline{\F}_q$, define $H_\alpha(X) \in \overline{\F}_q[X]$ as
$$H_\alpha(X):=\begin{cases}
N(X)-G(\alpha)D(X) &(G(\alpha) \in \overline{\F}_q),\\
D(X) &(G(\alpha)=\infty).
\end{cases}$$
It is clear that $H_\alpha(\alpha)=0$ for all $\alpha \in \overline{\F}_q$. The \emph{ramification index} $e_G(\alpha)$ of $\alpha$ is then its multiplicity as a root of $H_\alpha(X)$. The ramification index of $\infty$ is defined as $e_G(\infty):=e_{G_1}(0)$, where $G_1(X):=G(\frac{1}{X})$. Given any $\beta \in \P^1(\overline{\F}_q)$, the \emph{ramification multiset} $E_G(\beta)$ of $G(X)$ over $\beta$ is the multiset of ramification indices $e_G(\alpha)$ for any $\alpha \in G^{-1}(\beta)$, that is, $E_G(\beta):=\left[e_G(\alpha): \alpha \in G^{-1}(\beta)\right]$. In particular, the elements in $E_G(\beta)$ are positive integers whose sum is $\deg G$. We call $\alpha \in \P^1(\overline{\F}_q)$ a \emph{ramification point} (or \emph{critical point}) of $G(X)$ if $e_G(\alpha) > 1$, and its corresponding image $G(\alpha)$ a \emph{branch point} (or \emph{critical value}) of $G$. Hence a point $\beta \in \P^1(\overline{\F}_q)$ is a branch point of $G$ if and only if $E_G(\beta) \neq [1^{\deg G}]$, where $[m^n]$ denotes the multiset consisting of $n$ copies of $m$, or equivalently, $\sharp G^{-1}(\beta) < \deg G$.

One most important result regarding ramification is known as the Hurwitz genus formula (see \cite[Corollary 3.5.6]{Stich}), which applying to the field extension $\overline{\F}_q(x)/\overline{\F}_q(G(x))$ for transcendental $x$ over $\overline{\F}_q$ yields the following result used in our proof:
\begin{lemma}\label{RH}
	Let $G(X) \in \overline{\F}_q(X)$ be a rational function of degree $n$. Then
$$2n-2 \geq \sum_{\alpha \in \P^1(\overline{\F}_q)} (e_G(\alpha)-1)$$
with equality holds if and only if $\mathrm{char}(\overline{\F}_q) \nmid e_G(\alpha)$ for all $\alpha \in \P^1(\overline{\F}_q)$.
\end{lemma}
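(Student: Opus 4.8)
The plan is to reduce the claimed inequality to the standard Hurwitz genus formula, applied to the finite separable extension of function fields $\overline{\F}_q(x)/\overline{\F}_q(G(x))$, where $x$ is transcendental over $\overline{\F}_q$. First I would observe that since $G(X)$ is a non-constant rational function of degree $n$, the extension $\overline{\F}_q(x)/\overline{\F}_q(G(x))$ has degree exactly $n$; moreover, because $G$ need not be separable a priori, one should either invoke the hypothesis that $G$ is separable (which is the only case used in the paper, as the accompanying functions $g_{\c}(X)$ are separable by the discussion following \cite[Lemma 2.2]{Ding5}) or note that the inequality holds trivially when $G$ is inseparable since then $G'(X)=0$ and the genus formula degenerates. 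Assuming separability, both function fields are rational, hence of genus $0$, and the Riemann--Hurwitz formula \cite[Corollary 3.5.6]{Stich} reads
\[
2g(\overline{\F}_q(x))-2 \;=\; n\bigl(2g(\overline{\F}_q(G(x)))-2\bigr) \;+\; \deg \mathrm{Diff},
\]
i.e. $-2 = n\cdot(-2) + \deg\mathrm{Diff}$, so $\deg\mathrm{Diff} = 2n-2$.

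Next I would translate the different $\mathrm{Diff}$ into the language of ramification indices of $G$. Each place $P$ of $\overline{\F}_q(x)$ corresponds bijectively to a point $\alpha \in \P^1(\overline{\F}_q)$ (the place of degree one associated with $x=\alpha$ or $x=\infty$), and the ramification index $e(P \mid P')$ of $P$ over the place $P'$ of $\overline{\F}_q(G(x))$ lying below it is precisely $e_G(\alpha)$ as defined in the text: indeed $e_G(\alpha)$ is the multiplicity of $\alpha$ as a root of $H_\alpha(X)$, which is exactly the order of vanishing at $P$ of the local uniformizer pulled back from below, with the $\infty$ case handled by the substitution $G_1(X)=G(1/X)$ built into the definition. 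The contribution of $P$ to $\deg\mathrm{Diff}$ is $d(P\mid P')$, the different exponent, which by Dedekind's different theorem satisfies $d(P\mid P') \geq e_G(\alpha)-1$, with equality if and only if the ramification is tame, i.e. $\mathrm{char}(\overline{\F}_q)=p \nmid e_G(\alpha)$. Summing over all $\alpha \in \P^1(\overline{\F}_q)$ gives
\[
2n-2 \;=\; \deg\mathrm{Diff} \;=\; \sum_{\alpha \in \P^1(\overline{\F}_q)} d(P_\alpha \mid P'_\alpha) \;\geq\; \sum_{\alpha \in \P^1(\overline{\F}_q)} \bigl(e_G(\alpha)-1\bigr),
\]
which is the stated inequality, and equality holds exactly when every ramification is tame, i.e. $p \nmid e_G(\alpha)$ for all $\alpha$.

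The only genuinely delicate point is checking that the combinatorial ramification index $e_G(\alpha)$ defined purely in terms of root multiplicities of $H_\alpha(X)$ agrees with the function-field ramification index $e(P_\alpha \mid P'_\alpha)$; this is routine but must be stated carefully, especially the reduction of the point at infinity to $\alpha = 0$ via $G_1(X) = G(1/X)$, and the fact that a place of $\overline{\F}_q(G(x))$ lying below several places of $\overline{\F}_q(x)$ is recovered correctly by the fibre $G^{-1}(\beta)$. A secondary subtlety is the separability hypothesis: if one wants the lemma as literally stated for arbitrary non-constant $G$, one notes that an inseparable $G$ factors as $G = G_0 \circ \mathrm{Frob}^j$ with $G_0$ separable, and since the Frobenius substitution $X \mapsto X^p$ is unramified on $\P^1$ (every point has ramification index $1$ in the naive sense used here, as $X^p - \alpha = (X - \alpha^{1/p})^p$... — one must be careful here, so in practice the cleanest route is simply to invoke separability, which holds in every application in this paper). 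I do not expect any real obstacle beyond bookkeeping; the content is entirely the classical Riemann--Hurwitz formula together with the tame-versus-wild dichotomy in Dedekind's different theorem.
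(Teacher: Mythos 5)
Your proposal is correct for separable $G$ and follows exactly the route the paper intends: the paper gives no proof of its own, merely citing \cite[Corollary 3.5.6]{Stich} applied to the extension $\overline{\F}_q(x)/\overline{\F}_q(G(x))$, and your unpacking of that citation via $\deg\mathrm{Diff}=2n-2$ together with Dedekind's different theorem (different exponent $\ge e-1$, with equality precisely in the tame case) is the standard one. One correction: your interim claim that ``the inequality holds trivially when $G$ is inseparable'' is false --- if $G$ is inseparable then $N'=D'=0$, so each $H_\alpha(X)$ is a $p$-th power and $e_G(\alpha)\ge p$ for \emph{every} $\alpha\in\P^1(\overline{\F}_q)$, making the right-hand sum infinite; the lemma as literally stated does require separability, and your final instinct (simply assume it, since every $G$ to which the paper applies the lemma is separable) is the correct resolution.
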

\subsection{Linear equivalence, EA-equivalence and CCZ-equivalence}

Linear equivalence, EA-equivalence and CCZ-equivalence are equivalence relations of functions over the finite field $\F_{p^n}$ under which planar (or PN) and APN properties are invariant. Due to these equivalence relations, one PN (or APN) function can generate a huge class of PN (resp. APN) functions. While the notion of these equivalence relations was introduced in 2006 in \cite{Bud1}, the ideas behind this notion appeared much earlier \cite{Car-3,Nyb2}. Let us first recall some definitions:

\begin{definition} A function $F: \F_{p^n} \to \F_{p^n}$ is called
\begin{itemize}
\item \emph{linear} if $F(\alpha+\beta)=F(\alpha)+F(\beta)$ for any $\alpha, \beta \in \F_{p^n}$;

\item \emph{affine} if $F$ is a sum of a linear function and a constant;

\item \emph{affine permutation} (or \emph{linear permutation}) if $F$ is both affine (resp. linear) and a permutation on $\F_{p^n}$.

\item \emph{Dembowski-Ostrom polynomial} (DO polynomial) if
\[F(X)=\sum_{0 \le k,j <n} a_{k,j} X^{p^k+p^j}, \qquad a_{ij} \in \F_{p^n}. \]
\end{itemize}
\end{definition}

In particular, $F$ is affine if and only if $F(X)=b+\sum_{j=0}^{n-1} a_jX^{p^j}$ where $a_j,b \in \F_{p^n}$ for any $j$.

\begin{definition} Two functions $F$ and $F'$ from $\F_{p^n}$ to itself are called:
\begin{itemize}
\item \emph{affine equivalent} (or \emph{linear equivalent}) if $F'=A_1 \circ F \circ A_2$, where the mappings $A_1,A_2$ are affine (resp. linear) permutations of $\F_{p^n}$;
\item \emph{extended affine equivalent} (EA-equivalent) if $F'=A_1 \circ F \circ A_2+A$, where the mappings $A,A_1,A_2$ are affine, and where $A_1,A_2$ are permutations of $\F_{p^n}$;
\item \emph{Carlet-Charpin-Zinoviev equivalent} (CCZ-equivalent) if for some affine permutation $\mathcal{L}$ of $\F_{p^n}^2$ the image of the graph of $F$ is the graph of $F'$, that is, $\mathcal{L}(G_F)=G_{F'}$, where
    \[G_F=\left\{(x,F(x)): x \in \F_{p^n}\right\}, \quad G_{F'}=\left\{(x,F'(x)): x \in \F_{p^n}\right\}. \]
\end{itemize}
\end{definition}

It is obvious that linear equivalence is a particular case of affine equivalence, and affine equivalence is a particular case of EA-equivalence. It was known that EA-equivalence is a particular case of CCZ-equivalence and every permutation is CCZ-equivalence to its inverse \cite{Car-3}. CCZ-equivalence is more general than EA-equivalence but there are particular cases of functions for which CCZ-equivalence can be reduced to EA-equivalence. For instance, CCZ-equivalence coincides with EA-equivalence for planar functions \cite{Kyu}, and CCZ-equivalence coincides with linear equivalence for DO planar functions \cite{Lilya1,Lilya2}. Since the quadrinomial $f_{\c}(X)$ (\ref{DO}) is a DO polynomial, we only consider linear equivalence in this paper. For simplicity, if two such polynomials $f_1$ and $f_2$ are linear equivalent, we call them \emph{equivalent} to each other.


\subsection{Auxiliary lemmas}
In this paper we will frequently use the following result.
\begin{lemma}\cite[Lemma 2.1]{Faruk} \label{2:gcd} For a prime $p$,

i. $\gcd(p^k-1,p^{\l}-1)=p^{\gcd(k,\l)}-1$.

ii. \[\gcd(p^k+1,p^{\l}-1)=\left\{\begin{array}{ll}
1 & \frac{\l}{\gcd(k,\l)} \mbox{ is odd, and } p=2, \\
2 & \frac{\l}{\gcd(k,\l)} \mbox{ is odd, and } p \ge 3, \\
p^{\gcd(k,\l)}+1 & \frac{\l}{\gcd(k,\l)} \mbox{ is even}.
\end{array}
\right. \]
\end{lemma}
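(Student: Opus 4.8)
The plan is to prove both parts by running the Euclidean algorithm on the exponents, supplemented in part ii by careful $2$-adic bookkeeping. Throughout I would write $d=\gcd(k,\l)$ and factor $k=dk'$, $\l=d\l'$ with $\gcd(k',\l')=1$. For part i, I would first record the elementary identity, valid for $a\ge b>0$,
\[
p^a-1=p^{a-b}(p^b-1)+(p^{a-b}-1),
\]
so that $\gcd(p^a-1,p^b-1)=\gcd(p^{a-b}-1,p^b-1)$. Iterating this is precisely the subtractive Euclidean algorithm applied to the exponent pair $(k,\l)$, which terminates at $(\gcd(k,\l),0)$; since $\gcd(p^n-1,p^0-1)=p^n-1$, one reads off $\gcd(p^k-1,p^\l-1)=p^{\gcd(k,\l)}-1$. (The forward divisibilities $p^d-1\mid p^k-1$ and $p^d-1\mid p^\l-1$, coming from $d\mid k,\l$, already give that $p^d-1$ divides the gcd; the descent supplies the reverse bound.)

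For part ii, set $g=\gcd(p^k+1,p^\l-1)$. The key first step is that $g\mid p^k+1\mid p^{2k}-1$, whence $g\mid\gcd(p^{2k}-1,p^\l-1)=p^{\gcd(2k,\l)}-1$ by part i. I would then compute $\gcd(2k,\l)=d\cdot\gcd(2k',\l')$ and use the fact that $\gcd(k',\l')=1$ forces $\gcd(2k',\l')=\gcd(2,\l')$; hence $\gcd(2k,\l)=d$ when $\l'$ is odd and $\gcd(2k,\l)=2d$ when $\l'$ is even.

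In the case $\l'$ odd we then have $g\mid p^d-1\mid p^k-1$, while also $g\mid p^k+1$, so $g\mid(p^k+1)-(p^k-1)=2$. The value is now fixed by parity: for $p=2$ both $p^k+1$ and $p^\l-1$ are odd, giving $g=1$; for odd $p$ both are even, giving $g=2$. In the case $\l'$ even we instead obtain $g\mid p^{2d}-1=(p^d-1)(p^d+1)$. Here I would first show $p^d+1\mid g$: since $\gcd(k',\l')=1$ and $\l'$ is even, $k'$ is odd, so $p^d+1\mid p^{dk'}+1=p^k+1$, and $2d\mid\l$ gives $p^d+1\mid p^{2d}-1\mid p^\l-1$. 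Writing $g=(p^d+1)t$, the relation $g\mid(p^d-1)(p^d+1)$ yields $t\mid p^d-1$, and since $t\mid g\mid p^k+1$ together with $p^d-1\mid p^k-1$ we get $t\mid\gcd(p^k+1,p^k-1)\mid 2$.

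The remaining and most delicate point is to rule out $t=2$ and conclude $g=p^d+1$ uniformly in $p$. I would settle this by a valuation argument: because $k'$ is odd, the cofactor in $p^k+1=(p^d+1)\sum_{j=0}^{k'-1}(-1)^j p^{dj}$ has an odd number of odd terms and is therefore odd, so $v_2(p^k+1)=v_2(p^d+1)$; combined with $p^d+1\mid g\mid p^k+1$ this squeezes $v_2(g)=v_2(p^d+1)$, forcing $t$ to be odd and hence $t=1$. This covers both $p=2$ (where every quantity in sight is odd) and $p\ge 3$, giving $g=p^{\gcd(k,\l)}+1$ as claimed. I expect this final $2$-adic step to be the main obstacle, since it is the only place where one must argue that the easy bound $g\mid p^{2d}-1$ is not off by the stray factor of $2$.
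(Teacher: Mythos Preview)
Your argument is correct in every detail; the Euclidean descent for part~i and the case split on the parity of $\l'=\l/\gcd(k,\l)$ for part~ii are standard, and your $2$-adic valuation step to pin down $t=1$ in the $\l'$ even case is valid (the cofactor $\sum_{j=0}^{k'-1}(-1)^j p^{dj}$ is indeed odd when $k'$ is odd and $p$ is odd). There is nothing to compare against: the paper does not prove this lemma but simply quotes it from \cite[Lemma~2.1]{Faruk}, so your self-contained proof goes strictly beyond what the paper supplies.
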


Let $\F_q(t)$ be the rational function field over $\F_q$ and let $f(t)\in \F_q(t)$ be a nonzero rational function. Write $\supp f$ for the collection of places of $\F_q(t)$ (including $\infty$) at which $f$ has either zero or pole. Let $\chi$ be a multiplicative character of $\F_q^*$ of order $d>1$. Extend $\chi$ to a function on $\P^1(\F_q)$ by defining $\chi(0)=\chi(\infty)=0$.

We have the following Weil bound (see \cite[Theorem 3, p. 94]{WLi} or \cite{Weil}):
\begin{lemma} \label{2:weilbound}
Let $\chi, f$ be defined as above, with $m$ being the total degree of the places in $\supp f$ other than $\infty$. Suppose $f$ is not a d-th power, then
\[ \left|\sum_{x \in \P^1(\F_q)} \chi (f(x)) \right| \le (m-1) \sqrt{q}.\]
\end{lemma}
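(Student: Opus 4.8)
The statement to prove is Lemma~\ref{2:weilbound}, the Weil bound for character sums of the form $\sum_{x\in\P^1(\F_q)}\chi(f(x))$. Since this is a classical result with a reference given, my plan is to reduce it to the standard curve-theoretic form of the Weil bound rather than to reprove Weil's theorem from scratch.

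\textbf{Approach.} The plan is to associate to the pair $(f,\chi)$ the Kummer cover of $\P^1$ defined by $y^d=f(t)$, where $d=\ord(\chi)$. First I would normalize: write $f(t)=c\prod_i P_i(t)^{e_i}$ with $P_i$ distinct monic irreducible polynomials in $\F_q[t]$ and $c\in\F_q^*$; the places in $\supp f$ other than $\infty$ correspond to the $P_i$, so $m=\sum_i \deg P_i$. The hypothesis that $f$ is not a $d$-th power in $\F_q(t)$ guarantees (after possibly absorbing $d$-th power factors, which does not change the character sum since $\chi(z^d\cdot w)=\chi(w)$) that the curve $\mathcal{C}\colon y^d=f(t)$ is geometrically irreducible and the extension $\F_q(t,y)/\F_q(t)$ has degree dividing $d$ and is ramified, so in particular nontrivial. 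The number of affine points of $\mathcal{C}$ above a point $x\in\F_q$ with $f(x)\in\F_q^*$ is $\sum_{j=0}^{d-1}\chi^j(f(x))$; summing over all $x$ and isolating the $j=1$ term via orthogonality relates $\sum_x\chi(f(x))$ to $\#\mathcal{C}(\F_q)-q$ up to controlled error terms coming from the ramified/zero/pole fibres, whose total contribution is $O(m)$.

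\textbf{Key steps in order.} (1) Set up the Kummer curve and compute its genus via the Riemann--Hurwitz (or conductor-discriminant) formula: the genus $g_{\mathcal{C}}$ satisfies $2g_{\mathcal{C}}-2 \le d(m-2)+$ (something), giving a bound $g_{\mathcal{C}} \le$ roughly $\frac{(d-1)(m-1)}{?}$ — more precisely one gets $2g_{\mathcal C}-2\le -2d + (d-1)m$ when all ramification is tame, hence $g_{\mathcal C}\le \frac{(d-1)(m-1)-1}{1}$; I'd track the constants carefully enough to land the factor $m-1$. Actually the cleanest route here avoids genus entirely: invoke directly the form of the Weil bound for character sums as in \cite[Theorem~3, p.~94]{WLi}, which is stated precisely in this generality, and simply verify the hypotheses (namely that $f$ is not a $d$-th power). (2) Verify that absorbing $d$-th-power factors of $f$ only decreases $m$ and does not change either side's validity, so WLOG each $e_i$ is not divisible by $d$. (3) Apply the cited theorem. (4) Observe that the extension to $\P^1(\F_q)$ by $\chi(0)=\chi(\infty)=0$ contributes nothing beyond what is already accounted for, since those points are exactly where $f$ has a zero or pole, i.e. places in $\supp f$.

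\textbf{Main obstacle.} The only real subtlety is bookkeeping: getting the constant in front of $\sqrt q$ to be exactly $m-1$ (with $m$ the degree of the finite part of $\supp f$, excluding $\infty$) rather than, say, $m$ or $m+1$. This depends on treating the place at $\infty$ correctly — whether $f$ has a zero, pole, or neither at $\infty$ affects the genus computation by one unit, and the statement is phrased so that $\infty$ is excluded from the degree count $m$, which is precisely the normalization under which Weil's bound reads $(m-1)\sqrt q$. Since the excerpt explicitly cites \cite{WLi} and \cite{Weil} for exactly this statement, the honest and efficient proof is: reduce to the case where no $e_i$ is a multiple of $d$, note this preserves "not a $d$-th power'' and can only shrink $m$, and then quote the cited Weil bound verbatim; no new mathematics is needed.
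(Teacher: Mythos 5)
The paper gives no proof of this lemma at all: it is stated as a classical result with the citations \cite[Theorem 3, p.~94]{WLi} and \cite{Weil}, so your eventual plan of verifying the hypothesis (that $f$ is not a $d$-th power) and quoting the cited theorem is exactly the paper's (implicit) approach, and your Kummer-cover sketch is the standard proof of that cited result. One small imprecision worth noting: dividing out a $d$-th power factor \emph{does} change the sum at the finitely many points where that factor vanishes or blows up (since $\chi$ is defined to be $0$ at zeros and poles of $f$), but the discrepancy is bounded by the degree removed from $m$, which is at most that degree times $\sqrt q$, so the stated bound survives the reduction.
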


\section{Geometric properties of $g(X)$}\label{Cla}
For any $\c=(c_0,c_1,c_2,c_3) \in \F_{q^2}^4$, define
\begin{eqnarray} \label{3:acx} A(X)=A_{\c}(X)&:=&c_0X^{Q+1}+c_1X^Q+c_2X+c_3.\end{eqnarray}
The quadrinomial $f_{\c}(X)$ given in (\ref{DO}) can be written as $f_{\c}(X)=X^{Q+1}A\left(X^{q-1}\right)$.

Let us assume that $\c \ne \underline{0}$. Denote
\begin{eqnarray}\label{3:bcx}
B(X)=B_{\c}(X)&:=&\bar{c}_3X^{Q+1}+\bar{c}_2X^Q+\bar{c}_1X+\bar{c}_0,\\
\label{3:gc} g(X)=g_{\c}(X)&:=&B_{\c}(X)/A_{\c}(X).
\end{eqnarray}
Here we use the notation $\bar{c}:=c^q$ for any $c \in\F_{q^2}$.

Geometric properties of $g_{\c}(X)$ for a general $q$ including $q$ odd were essentially presented in \cite{Ding4}. However, the focus of \cite{Ding4} is on the question of when $f_{\c}(X)$ is a permutation on $\F_{q^2}$ which turns out to require that $q$ is even, and to address this question the complete and detailed geometric properties of $g_{\c}(X)$ were not specified explicitly especially when $q$ is odd as they were not needed in \cite{Ding4}. To prove Theorem \ref{Main}, however, we need such more detailed information. For this reason, in this section, we follow the ideas of \cite{Ding4} closely and give detailed geometric properties of $g_{\c}(X)$ for all $q$ while focusing on the case that $q$ is odd. This will be useful in classifying $g_{\c}(X)$ in the next section. To describe the result, let us define a few parameters
\begin{eqnarray}\label{3:nota}
\left\{
\begin{array}{lll}e_1&:=&c_0\bar{c}_0-c_1\bar{c}_1-c_2\bar{c}_2+c_3\bar{c}_3, \\
e_2&:=&-c_0\bar{c}_0-c_1\bar{c}_1+c_2\bar{c}_2+c_3\bar{c}_3, \\
e_3&:=&-c_0\bar{c}_0+c_1\bar{c}_1-c_2\bar{c}_2+c_3\bar{c}_3, \\
\te_2&:=&\bar{c}_2c_3-\bar{c}_0c_1,\\
\te_3&:=&\bar{c}_1c_3-\bar{c}_0c_2,\\
\te_1^2&:=&e_2^2-4\te_2\bar{\te}_2,
\end{array} \right.
\end{eqnarray}
and three polynomials
\begin{eqnarray} \label{3:WUV}
\left\{
\begin{array}{lll}
W(X)&:=&(c_1c_2-c_0c_3)X^2+e_1X+\left(\overline{c}_1\overline{c}_2-
\overline{c}_0\overline{c}_3\right),\\
U(X)&:=&\bar{\te}_2X^2+e_2X+\te_2,\\
V(X)&:=&\bar{\te}_3^{1/Q}X^2+e_3^{1/Q}X+\te_3^{1/Q}.
\end{array} \right.
\end{eqnarray}
Here for simplicity, we drop the subscript $\c$ in the notation when there is no ambiguity. It is easy to see that $e_1,e_2,e_3,\te_1^2 \in \F_{q}$, $\te_1,\te_2,\te_3 \in \F_{q^2}$ and $W(X),U(X),V(X) \in \F_{q^2}[X]$ are SCR polynomials (if their degrees are two).

We first state some properties that relate the three polynomials $U(X), V(X)$ and $W(X)$ in (\ref{3:WUV}) with $A(X)$ and $B(X)$ given in (\ref{3:acx}) and (\ref{3:bcx}) respectively:
\begin{lemma}\label{UVW}
Assume $\c \neq \underline{0}$ and $q$ is odd. Denote $C(X)=\gcd(A(X),B(X))$, the greatest monic common divisor of $A(X)$ and $B(X)$.
\begin{enumerate}
    \item If $U(X)$ and $V(X)$ are not both zero, then $C(X) \mid \gcd(U(X),V(X))$ and $C(X)$ is either $X$ or a monic \emph{SCR} polynomial of degree at most two;
    \item Assume none of $U(X), V(X)$ or $W(X)$ is the zero polynomial. Denote $\Gamma=\Gamma_{\c}$ the union of the set of roots of $V(X)$ and the set with $(2-\deg V)$ copies of $\infty$ (this set is either empty if $\deg V=2$, or $\{\infty\}$ if $\deg V=1$); denote $\Lambda=\Lambda_{\c}$ the union of the set of roots of $W(X)$ and the set with $(2-\deg W)$ copies of $\infty$.
    \begin{enumerate}
    \item $\sharp \Gamma=\sharp \Lambda \in \{1,2\}$, and the cardinality is $1$ if and only if $\te_1=0$;
    \item Either both $\Gamma,\Lambda \subset \mu_{q+1}$, or both sets are of the form $\left\{\alpha,\bar{\alpha}^{-1}\right\}$ for some $\alpha \in \F_{q^2} \setminus \mu_{q+1}$;
    \item $\Gamma$ is the complete set of ramification points of $g$, and any branch point of $g$ is in $\Lambda$;
    \item Assume $C(X)=1$. Then $\Lambda$ gives the complete set of branch points of $g$. Moreover, the $g$-ramification multiset of any $\lambda \in \Lambda$ is either $[Q+1]$ or $[1,Q]$.
    \end{enumerate}
\end{enumerate}
\end{lemma}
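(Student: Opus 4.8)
The plan is to reduce everything to a handful of polynomial identities and then feed them into the Hurwitz genus formula (Lemma~\ref{RH}), Lemma~\ref{SCR} and Lemma~\ref{lem1}, following the strategy of \cite{Ding4}. Write $A=X^Qa+b$ and $B=X^Q\widehat{b}+\widehat{a}$ with $a=c_0X+c_1$, $b=c_2X+c_3$, so that $\widehat{a}=\bar c_1X+\bar c_0$, $\widehat{b}=\bar c_3X+\bar c_2$, and $B=\widehat{A}$ whenever $c_0\neq0$; also $A'=c_0X^Q+c_2$ and $B'=\bar c_3X^Q+\bar c_1$. A direct expansion yields the identities
\[ \widehat{b}\,A-a\,B=U,\qquad b\,B-\widehat{a}\,A=X^QU,\qquad A\,B'-A'\,B=V^Q, \]
and, on eliminating $\alpha$ from the two conditions that $\alpha$ be a root of $B-\beta A$ and of $(B-\beta A)'$, one finds that every branch point $\beta\in\overline{\F}_q$ of $g$ satisfies $W(\beta)=0$. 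Finally a short computation with the quantities in (\ref{3:nota}) gives $\Delta(W)=\Delta(U)=\te_1^2$ and $e_3^2-4\te_3\bar\te_3=\te_1^2$ (so $\Delta(V)^Q=\te_1^2$); hence $\Delta(U),\Delta(V),\Delta(W)$ vanish exactly when $\te_1=0$, and $\Delta(V)$ is a square in $\F_q^*$ precisely when $\Delta(W)$ is, since $x\mapsto x^Q$ is a square-preserving automorphism of $\F_q$.

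For part 1: the first identity gives $C\mid U$. If $U=\underline0$ then $g=\widehat{b}/a$ has degree $\le1$, so the third identity becomes $V^Qa^2=\bar\te_3A^2$, which for degree reasons forces $\bar\te_3=0$ and then $V=\underline0$; thus under the hypothesis of part 1 we may assume $U\neq\underline0$, whence $\deg C\le\deg U\le2$. Since $C\mid A$ and $C\mid B$, also $C^2\mid AB'-A'B=V^Q$; combining $C\mid U$, $C^2\mid V^Q$, $\deg C\le2$ and the relation $\Delta(U)=0\Leftrightarrow\Delta(V)=0$ shows that a repeated factor of $C$ would have to divide $V$, so $C\mid V$ and hence $C\mid\gcd(U,V)$. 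That $C$ is $X$ or a monic SCR polynomial follows because $C$ divides the SCR polynomial $U$ and because the multiset of roots of $C=\gcd(A,B)$ is stable under $\alpha\mapsto\alpha^{-q}$ (using $B=\widehat{A}$ when $c_0\neq0$, and a short direct check otherwise), with the root $0$ occurring only when $c_0=c_3=0$ and accounting for the case $C=X$.

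For parts 2(a)--(b): by Lemma~\ref{lem1}(1) and the discriminant relations above, $\#\Gamma=1$ exactly when $V$ has an (inseparable) double root, i.e.\ $\Delta(V)=0$, i.e.\ $\te_1=0$; the same computation for $W$ gives $\#\Lambda=1$ under the identical condition, and when $U,V,W\neq\underline0$ one checks $\deg V,\deg W\in\{1,2\}$, so otherwise $\#\Gamma=\#\Lambda=2$. Since $V$ and $W$ are SCR of degree $\le2$ with $X$-coefficient in $\F_q$, Lemma~\ref{lem1}(2)--(3) shows the position of their roots relative to $\mu_{q+1}$ is governed only by whether $\Delta(V)$, resp.\ $\Delta(W)$, is a square in $\F_q^*$; as these conditions coincide, $\Gamma$ and $\Lambda$ have the same type: either both lie in $\mu_{q+1}$ (Lemma~\ref{lem1}(1)--(2)), or both are a pair $\{\alpha,\bar\alpha^{-1}\}$ with $\alpha\notin\mu_{q+1}$ forced by the SCR symmetry (degenerating to $\{0,\infty\}$ when a degree drops to $1$).

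The core of the proof is parts 2(c)--(d), done via $g'=V^Q/A^2$ and the Hurwitz bound. At any $\alpha$ that is not a pole of $g$ one has $e_g(\alpha)-1=\ord_\alpha(g')=Q\,\ord_\alpha(V)-2\,\ord_\alpha(C)$, which, since $C\mid V$ and $Q\ge3$, is $\ge1$ iff $\alpha$ is a root of $V$. The poles of $g$ are the roots of $A_0=A/C$; when $c_0\neq0$ the only possible repeated root of $A$ is $\xi$ with $\xi^Q=-c_2/c_0$, and $A(\xi)=0$ occurs exactly when $\deg W\le1$, in which case $A=c_0(X-\xi)^{Q+1}$ and a direct calculation shows $\xi,\bar\xi^{-1}$ are roots of $V$. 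Together with the behaviour at infinity---where $g(1/X)=g_{\c'}$ for $\c'=(c_3,c_2,c_1,c_0)$, giving $e_g(\infty)>1\Leftrightarrow\bar\te_3=0\Leftrightarrow\deg V\le1$---this shows the ramification locus of $g$ is exactly $\Gamma$. The identity $W(\beta)=0$, after separately handling the few excluded values (e.g.\ $W(\bar c_3/c_0)=0$ whenever $\te_3=0$), then places every branch point in $\Lambda$. Finally, assuming $C=1$, the Hurwitz inequality $\sum_\alpha(e_g(\alpha)-1)\le2\deg g-2$ applied to the at most two points of $\Gamma$, together with $e_g(\alpha)\le\deg g$, leaves for each $\lambda\in\Lambda$ only the multiset $[\deg g]=[Q+1]$ (one totally ramified preimage, the generic case) or, when $\deg A<Q+1$ (e.g.\ $c_0=c_1=0$), a preimage of order $Q$ together with one unramified preimage, giving $[1,Q]$; in particular $\Lambda$ is precisely the set of branch points. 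The step I expect to be the main obstacle is this last block, and within it two things: proving that every point of $\Gamma$ is genuinely a ramification point with the asserted index (the converse to ``ramification points lie in $\Gamma$''), and extracting the exact ramification multisets of 2(d). Both require careful multiplicity bookkeeping at each of the degenerate configurations ($c_0=0$, $c_3=0$, $c_0c_3=c_1c_2$, $\te_1=0$), where $\deg A$ or $\deg g$ drops and the generic argument must be redone by hand.
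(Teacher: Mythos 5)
Your overall strategy is the paper's, but the engine of the paper's proof of 2(c)--(d) is the identity $U(X)V(X)^Q=W(g(X))A(X)^2$, which you never state, and your substitutes for it do not work. The elimination you gesture at (``one finds that every branch point $\beta$ satisfies $W(\beta)=0$'') is exactly this identity in disguise and has to be written down, including the bookkeeping at poles and at $\infty$ when $\deg W\le 1$ or $\deg V\le 1$. More seriously, your derivation of 2(d) from ``Hurwitz plus $e_g(\alpha)\le\deg g$'' is not valid: Lemma~\ref{RH} only gives the upper bound $\sum_\alpha(e_g(\alpha)-1)\le 2Q$, which is compatible with, say, two ramification points of index $2$, so nothing there forces the indices to be $Q$ or $Q+1$. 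In the paper these indices are read off from the factorization (with $C=1$, up to a nonzero constant) $U(X)V(X)^Q=(B-\gamma_1A)(B-\gamma_2A)$: a point of $\Gamma$ has index $Q+1$ or $Q$ according as it is or is not a root of $U$. Your alternative criterion --- that $[1,Q]$ occurs ``when $\deg A<Q+1$'' --- is false: when $C=1$ one always has $\deg g=Q+1$, and $[1,Q]$ occurs for instance for $g$ equivalent to $X^Q(X-1)/(X+\ep)$, where nothing degenerates. Relatedly, $e_g(\alpha)-1=\ord_\alpha(g')$ fails whenever $p\mid e_g(\alpha)$ (exactly the relevant case $e_g(\alpha)=Q$); only $\ord_\alpha(g')\ge e_g(\alpha)-1$ holds, so $g'=V^Q/A^2$ identifies the ramification locus but cannot give the indices. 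You flagged this block as the likely obstacle, and indeed it is where the proposal falls short.

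In part 1 your reduction to $U\ne\underline{0}$ rests on the claim that $U=\underline 0$ forces $V=\underline 0$, and this is false. Take $\c=(1,0,\gamma,0)$ with $\gamma\in\mu_{q+1}$: then $\te_2=e_2=0$ so $U=0$, while $\te_3=-\gamma\ne 0$ so $V\ne 0$; here $A=X(X^Q+\gamma)$, $B=\bar\gamma(X^Q+\gamma)$, $V^Q=-\bar\gamma(X^Q+\gamma)^2$, and the identity $V^Qa^2=\bar\te_3A^2$ that you invoke holds with $\bar\te_3=-\bar\gamma\ne 0$, so the ``degree reasons'' do not apply. In this configuration $C=X^Q+\gamma$ has degree $Q$, so the bound $\deg C\le 2$ and the rest of your part-1 argument collapse. (This is precisely the situation where $A$ has roots in $\mu_{q+1}$, which later lemmas exclude, and the paper's own proof of part 1 also quietly assumes $U\ne 0$ here; but it cannot be disposed of by a false implication.) A last small point: when $\deg W\le 1$ and $c_0\ne 0$, $A$ factors as $c_0(X-\eta)(X-\xi)^Q$ with $\xi^Q=-c_2/c_0$ and $\eta=-c_1/c_0$, not as $c_0(X-\xi)^{Q+1}$.
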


We remark when $q$ is even, all the above statements of Lemma \ref{UVW} remain true except 2 c) in the special case that $Q=2$ and $C(X) \neq 1$ and $\deg g=1$, in this case we have $\Gamma =\emptyset$ as $g$ has no ramification or branch points.

We also remark that the essence of Lemma \ref{UVW} is essentially contained in the proof of \cite[Theorem 3.1]{Ding4}. For the sake of completeness, we include the proof here. We focus on the case that $q$ is odd.

\begin{proof}

For any polynomial of the form $P(X):=\alpha X^2+\beta X+\gamma$ with $\alpha,\beta,\gamma \in \F_{q^2}$, define
\[\Delta(P):=\beta^2-4 \alpha \gamma. \]
Thus if $\deg (P)=2$ then $\Delta(P)$ is the discriminant of $P(X)$. Recall from \cite{Ding4} that $U(X), V(X)$ and $W(X)$ stated in (\ref{3:WUV}) satisfy the following identities:
\begin{align}
        U(X)&=(\bar{c}_3X+\bar{c}_2)A(X)-(c_0X+c_1)B(X);\label{E1}\\
        V(X)^Q&=A(X)B'(X)-A'(X)B(X);\label{E2}\\
        \Delta(W)&=\Delta(U)=\Delta(V)^Q=\te_1^2;\label{E3}\\
        U(X)V(X)^Q&=W(g(X))A(X)^2.\label{E4}
\end{align}
It is immediate from (\ref{E1}) and (\ref{E2}) that $C(X) \mid U(X)$ and $C(X) \mid V(X)^Q$. Moreover, the second part of Statement 1 easily follows from the fact that each of $U(X)$ and $V(X)$ is either a constant times $X$ or a degree-two SCR polynomial and the assumption that they are not both zero. Now assume $C(X) \nmid V(X)$. This implies $C(X)$ is not square-free. In particular it must be the square of a linear SCR polynomial. Hence $U(X)$ is a constant multiple of $C(X)$, which implies $\te_1=0$. Then Equation (\ref{E3}) implies $\Delta(V)=0$, and $C(X) \mid V(X)^Q$ implies that the unique root of $C(X)$ is a root of $V(X)$, so $V(X)$ is also a constant multiple of $C(X)$, a contradiction. Hence we have $C(X) \mid \gcd(U(X),V(X))$.

    Statements 2 a) and 2 b) follow from Equation (\ref{E3}), Lemmas \ref{SCR} and \ref{lem1} in Section \ref{Pre}. In addition, the right hand side of (\ref{E2}) is simply $A(X)^2g'(X)$. Hence we see that any ramification point of $g$ in $\overline{\F}_q$ must be in $\Gamma$. On the other hand, writing $A(X)=A_0(X)C(X)$ and $B(X)=B_0(X)C(X)$ for $A_0, B_0 \in \F_{q^2}[X]$, so that $\gcd(A_0(X),B_0(X))=1$. Then (\ref{E4}) can be rewritten as
    \begin{equation}\label{E5}
    U(X)V(X)^Q=W(g(X))A_0(X)^2C(X)^2.
    \end{equation}
Here we note that $W(g(X))A_0(X)^2=W\left(\frac{B_0(X)}{A_0(X)}\right)A_0(X)^2$ is a polynomial in $\F_{q^2}$.

Denote $\Gamma_1:=\Gamma \setminus \{\infty\}$. If $\te_1=0$, then $U(X)=\bar{\te}_2(X-\alpha)^2$, $V(X)=\bar{\te}_3^{1/Q}(X-\gamma)^2$ and $W(X)=c(X-\lambda)^2$ for some $c \in \F_{q^2}^*$ and $\alpha,\gamma,\lambda \in \mu_{q+1}$. We have $\Gamma_1=\{\gamma\}$ and $\Lambda=\{\lambda\}$. Putting these into (\ref{E5}) and taking square root on both sides, we have
$$(X-\alpha)(X-\gamma)^Q=\tilde{c}\,(B_0(X)-\lambda A_0(X))C(X)$$
for some $\tilde{c} \in \F_{q^2}^*$.

Here $C(X)=1$ if $\alpha \neq \gamma$, otherwise $C(X)=(X-\gamma)^i$ for some $i \leq 2$. Upon dividing both sides by $C(X)$, it is easy to see that $\gamma$ is a multiple root of the LHS (the factor $X-\gamma$ appears with exponent at least $Q-1 \geq 2$). Hence it is also so in the RHS, which is precisely equivalent to saying that $\gamma \in \Gamma_1$ is a ramification point of $g(X)$, and its image is a branch point of $g$, that is, $\lambda \in \Lambda$.

Now assume $\te_1 \neq 0$. Then $\sharp \Gamma=\sharp \Lambda=2$ and $U(X), V(X)$ and $W(X)$ are square-free. Writing $U(X)=U_0(X)C(X)$ and $V(X)=V_0(X)C(X)$, and upon dividing both sides of (\ref{E5}) by $C(X)^2$, we obtain
$$U_0(X)V_0(X)^QC(X)^{Q-1}=W(g(X))A_0(X)^2.$$
Now any $\gamma \in \Gamma_1$ is a root of either $V_0(X)$ or $C(X)$. Since $Q \geq 3$, $X-\gamma$ appears as a factor with exponent at least two in LHS. Hence $\gamma$ is a multiple root of LHS and hence also a root of RHS. This clearly implies $\gamma$ is a ramification point of $g(X)$, whose image $g(\gamma)$ is either a root of $W(X)$ or $\infty$, and is hence a branch point of $g$. Note that $g(\gamma)=\infty$ if and only if $\gamma$ is a root of $A_0(X)$, whence this may happen only if $\deg W=1$. In either case, we see that the branch point $g(\gamma) \in \Lambda$.

To complete the proof of Statement 2 c), we need to show that $\infty$ is a ramification point of $g$ if and only if $\deg V=1$. This can be proved as follows: $\infty$ is a ramification point of $g$ if and only if 0 is a ramification point of $g(1/X)=g_{\c'}(X)$ where $\c'=(c_3,c_2,c_1,c_0)$. It is easy to verify that $V_{\c'}(X)=-V^{(q)}(X)$ and $W_{\c'}(X)=W(X)$. Hence 0 is a ramification point of $g_{\c'}$ if and only if 0 is a root of $V^{(q)}$ if and only if 0 is a root of $V$, which is equivalent to $\deg V=1$. In this case, the branch point $g(\infty)=g_{\c'}(0)$ is in $\Lambda_{\c'}=\Lambda$.

Finally, assume $C(X)=1$. Then $\deg g=\max\{\deg A, \deg B\}$. Note that the latter is not $Q+1$ if and only if $c_0=c_3=0$, whence in this case $C(X) \neq 1$. Hence $\deg g=Q+1$. By (\ref{E4}), for any ramification point $\gamma \in \Gamma, e_g(\gamma)$ is either $Q+1$ or $Q$ according to whether it is a root of $U(X)$ or not. This implies that the $g$-ramification multiset of the corresponding branch point $g(\gamma) \in \Lambda$ is $[Q+1]$ or $[1,Q]$. Hence each branch point corresponds to exactly one ramification point. By Statements 2 a) and c), then $g(\Gamma)=\Lambda$, so $\Lambda$ gives the complete set of branch points of $g$.
\end{proof}

Armed with Lemma \ref{UVW}, we can give more detailed information about geometric properties of $g_{\c}(X)$. We remark that Lemma \ref{lem11} are still true when $p=2$, though we focus on the case that $q$ is odd.

\begin{lemma}\label{lem11}
Assume $\c \neq \underline{0}$ and $q$ is odd. Then we have
\begin{enumerate}
    \item $g(X)$ is constant if and only if $U(X)$ and $V(X)$ are both zero;
    \item $g(X)$ is non-constant and $A(X)$ has a root in $\mu_{q+1}$ if and only if at least one of $U(X), V(X)$ is a nonzero polynomial with roots in $\mu_{q+1}$ and $\deg g \neq Q+1$;
    \item $g(X)$ is non-constant and $A(X)$ has no roots in $\mu_{q+1}$ if and only if one of the following is true:
    \begin{enumerate}
        \item $g(X)$ is $\overline{\F}_q$-linear equivalent to $X^n$ where $n \in \{Q+1, Q-1\}$; this occurs if and only $U(X)/V(X) \in \F_{q^2}^*$; or
        \item $g(X)$ has at least one branch point in $\P^1(\overline{\F}_q)$ with ramification multiset $[1,Q]$.
    \end{enumerate}
\end{enumerate}
\end{lemma}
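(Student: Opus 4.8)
The plan is to build the proof on Lemma \ref{UVW}, handling the three dichotomies in order. For Statement 1, the forward direction is immediate from identity (\ref{E2}): since $A(X)^2 g'(X) = V(X)^Q$, if $g$ is constant then $g'=0$, forcing $V(X)=0$; and then (\ref{E1}) forces $U(X)=0$ as well (alternatively, if both $U,V$ vanish then by (\ref{E4}) $W(g(X))A(X)^2 \equiv 0$, which since $A \ne 0$ says $g$ takes values only in the root set of $W$, hence is constant because a non-constant rational map is surjective onto $\P^1(\overline{\F}_q)$; one must also dispose of the degenerate case $W=0$, which I would check corresponds via (\ref{3:WUV}) to $c_1c_2=c_0c_3$, $e_1=0$, etc., and still forces $g$ constant). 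The converse is the direct reading of (\ref{E2}). So Statement 1 is essentially a bookkeeping exercise on the four identities.

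For Statement 2, the key observation is that $A(X)$ has a root $\alpha \in \mu_{q+1}$ exactly when $g$ has a pole on $\mu_{q+1}$ that survives cancellation, equivalently when $\deg g < \deg A = Q+1$ in the generic case, but one must be careful: if $C(X) = \gcd(A,B) \ne 1$ then the degree drop can also come from cancellation without a $\mu_{q+1}$-root of $A$. I would argue: assuming $g$ non-constant, $\deg g \ne Q+1$ forces $c_0 = c_3 = 0$ (the only way the top coefficient of $A$ or $B$ vanishes and cannot be rescued), or forces $C(X) \ne 1$; in either situation one reads off from (\ref{E1}) that $C(X)$ divides $U(X)$ and from Lemma \ref{UVW}(1) that $C(X)$ is $X$ or an SCR polynomial of degree $\le 2$, and $A(X)$ having a root in $\mu_{q+1}$ then translates to $U(X)$ or $V(X)$ being a \emph{nonzero} polynomial whose roots (or the relevant root) lie in $\mu_{q+1}$. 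The converse direction runs the implications backward: a nonzero $U$ or $V$ with a root $\alpha \in \mu_{q+1}$, together with $\deg g \ne Q+1$, forces $\alpha$ to be a common root of $A$ and $B$ (using $C \mid \gcd(U,V)$ from Lemma \ref{UVW}(1) when both are nonzero, or a direct substitution into (\ref{E1})), so in particular $A(\alpha)=0$. This case requires the most care with the degenerate sub-cases, and I expect it to be the main obstacle.

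For Statement 3, suppose $g$ is non-constant (so by Statement 1 not both $U,V$ vanish) and $A$ has no root in $\mu_{q+1}$, which by Statement 2 is equivalent to $\deg g = Q+1$ whenever a nonzero $U$ or $V$ has a $\mu_{q+1}$-root — more cleanly, I would first show that under the hypothesis of Statement 3 one has $C(X) = 1$, hence $\deg g = Q+1$, placing us in the setting of Lemma \ref{UVW}(2d). Now split on whether $W(X)$ is zero. If $W = 0$, then (\ref{E4}) gives $U(X)V(X)^Q = 0$, so one of $U,V$ vanishes; combined with $C(X)=1$ and Statement 1 (not both zero) this means exactly one vanishes, and I claim this is precisely the situation $U(X)/V(X) \in \F_{q^2}^*$ is \emph{not} met — rather, I should instead note that when neither $U$ nor $V$ is zero, identity (\ref{E4}) with $W = c_0'(X-\lambda_1)(X-\lambda_2)$ (or a perfect square if $\te_1 = 0$) shows $g$ has exactly the branch points $\Lambda$, each with ramification multiset $[Q+1]$ or $[1,Q]$ by Lemma \ref{UVW}(2d); if some branch point has type $[1,Q]$ we are in case (b), and if \emph{every} branch point has type $[Q+1]$ then $g$ is totally ramified over two points (or over one point if $\te_1=0$, but then the Hurwitz bound of Lemma \ref{RH} with $e_g = Q+1$ and a second totally ramified point at a pole forces the structure) — a degree-$(Q+1)$ map totally ramified over two points of $\P^1$ is $\overline{\F}_q$-linear equivalent to $X^{Q+1}$. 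Finally the remaining possibility, where $W=0$ or one of $U,V$ is zero, should be shown to give $g$ linear equivalent to $X^{Q-1}$: here the ramification data is a single point of multiplicity $Q+1$ paired against $[1,Q]$-type behavior, or more directly $g(X) = X^{r} D^{(q)}(1/X)/D(X)$ with the relevant $D$ of degree one having no root in $\mu_{q+1}$, which is $\overline{\F}_q$-equivalent to a monomial of exponent $Q \pm 1$; I would pin down $n = Q-1$ versus $n = Q+1$ by comparing with the $U/V \in \F_{q^2}^*$ criterion via (\ref{E4}). The main work is a clean case analysis of the ramification multiset $[Q+1]$ versus $[1,Q]$ at the (at most two) branch points, using Lemma \ref{UVW}(2d) and the Hurwitz formula (Lemma \ref{RH}) to conclude that total ramification over two points forces the monomial form.
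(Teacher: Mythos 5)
Your overall strategy---leaning on Lemma \ref{UVW} and the identities (\ref{E1})--(\ref{E5}), and invoking the Hurwitz formula to force the monomial form when every branch point has multiset $[Q+1]$---is the paper's, and Statements 1 and 2 are handled essentially as in the paper. Two small corrections there: for the converse of Statement 1 you need \emph{both} facts, namely $U=0$ giving $\deg g\le 1$ via (\ref{E1}) and $V=0$ giving non-separability via (\ref{E2}); ``the direct reading of (\ref{E2})'' alone only yields $g'=0$, and your alternative route through (\ref{E4}) genuinely breaks down when $W=0$. Also, a root of $A$ in $\mu_{q+1}$ is automatically a root of $B$ (the roots of $B$ are the $(-q)$-th powers of those of $A$), so such a pole is \emph{cancelled} rather than ``survives cancellation''---this is exactly why it appears as a root of $C=\gcd(A,B)$, which is the mechanism the paper's proof of Statement 2 runs on.

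The genuine gap is in Statement 3. You assert that the hypothesis ($g$ non-constant, $A$ with no roots in $\mu_{q+1}$) forces $C(X)=1$ and hence $\deg g=Q+1$. This is false: Statement 2 only rules out the \emph{conjunction} ``some nonzero $U$ or $V$ has roots in $\mu_{q+1}$ and $\deg g\ne Q+1$'', so the configuration $\deg g\ne Q+1$ with $U,V$ both nonzero and having \emph{no} roots in $\mu_{q+1}$ survives---and that is precisely where $g$ is $\overline{\F}_q$-linear equivalent to $X^{Q-1}$ (Family 3 of Lemma \ref{lem5}: $\deg C=2$ with root set $\{\alpha,\bar{\alpha}^{-1}\}$, $\alpha\notin\mu_{q+1}$, and $\deg g=Q-1$). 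Your attempted recovery of the exponent $Q-1$ from ``$W=0$ or one of $U,V$ is zero'' is also wrong: if exactly one of $U,V$ vanishes then $\te_1=0$, the surviving one has a multiple root in $\mu_{q+1}$, and $\deg g\ne Q+1$, so by Statement 2 the polynomial $A$ \emph{does} have a root in $\mu_{q+1}$ and we are outside the hypothesis of Statement 3 entirely (and $W=0$ forces $U V^Q=0$ by (\ref{E4}), reducing to that same excluded situation). The correct split, as in the paper, is on $\deg g$: for $\deg g=Q+1$ run the ramification/Hurwitz argument you sketch; for $\deg g\ne Q+1$ show $C$ is a constant multiple of $V$, deduce $\deg g=Q-1$, and divide (\ref{E5}) by $C^2$ to get $V^{Q-1}=\tilde{c}\,W(g(X))A_0(X)^2$, which exhibits two totally ramified points and hence $X^{Q-1}$.
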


We remark that lemma \ref{lem11} is essentially contained in the proof of \cite[Theorem 3.1]{Ding4}. In particular, 1) and 2) correspond to (A3) and (A4) of \cite[Theorem 3.1]{Ding4} respectively with explicit conditions on $U(X),V(X)$ which can be obtained following the proof of \cite[Theorem 3.1]{Ding4}; 3 a) and 3 b) correspond to (A2) and (A1) of \cite[Theorem 3.1]{Ding4} with explicit conditions which can be found following the proof of \cite[Theorem 3.1]{Ding4}. For the sake of completeness, we provide a detailed proof.

\begin{proof}

For 1), if $g(X)$ is constant, then there is $\lambda \in \F_{q^2}^*$ such that $\bar{c}_3=\lambda c_0, \bar{c}_2=\lambda c_1, \bar{c}_1=\lambda c_2$ and $\bar{c}_0=\lambda c_3$. Putting into the formulas in (\ref{3:nota}) implies $e_2=e_3=\te_2=\te_3=0$. Hence $U(X)$ and $V(X)$ are both zero; Next assume that both $U(X)$ and $V(X)$ are zero. By using (\ref{E1}), we see that $g(X)=\frac{\bar{c}_3X+\bar{c}_2}{c_0X+c_1}$, so $\deg g \leq 1$. By using (\ref{E2}), we see that $g'(X)=V(X)^QA(X)^2=0$, so $g(X)$ is non-separable. Combining both, then $g(X)$ must be constant. So we have proved 1) of Lemma \ref{lem11}.

For 2), let us first assume that $g(X)$ is non-constant and $A(X)$ has a root $\alpha$ in $\mu_{q+1}$. Then at least one of $U(X)$ and $V(X)$ is nonzero. Note that the multiset of roots of $B(X)$ in $\overline{\F}_q^*$ is the same as the multiset of $(-q)$-th powers of roots of $A(X)$ in $\overline{\F}_q^*$. Hence $\alpha=\alpha^{-q}$ is also a root of $B(X)$, which implies $C(X)=\gcd(A(X),B(X))$ has a root in $\mu_{q+1}$. On the other hand, if $C(X)$ has a root in $\mu_{q+1}$, then obviously so does $A(X)$. Hence to prove Statement 2), it suffices to show that $C(X)$ has a root in $\mu_{q+1}$ if and only if any nonzero member among $U(X), V(X)$ have roots in $\mu_{q+1}$ and $\deg g \neq Q+1$. This follows from Statement 1 of Lemma \ref{UVW} by simply noting that whenever $U(X)$ (resp. $V(X)$) is nonzero, then either all its roots are in $\mu_{q+1}$ or none of the roots are, and also that $\deg g \neq Q+1$ if and only if $C(X) \neq 1$. This proves 2) of Lemma \ref{lem11}.

As for 3), let us assume that $g(X)$ is non-constant, and $A(X)$ has no roots in $\mu_{q+1}$. We first know by 1) of Lemma \ref{lem11} that at least one of $U(X)$ and $V(X)$ is nonzero.

If $U(X)=0$ and $V(X) \neq 0$, then $\te_1=0$. By (\ref{E5}), $\Delta(V)=0$. By Lemma \ref{lem1}, $V(X)$ has a multiple root in $\mu_{q+1}$. Moreover, $U(X)=0$ implies that $\deg g \leq 1 < Q+1$ and so $A(X)$ has a root in $\mu_{q+1}$ by Statement 2) of Lemma \ref{lem11}.

If $V(X)=0$ and $U(X) \neq 0$, then $\Delta(V)=0$. By (\ref{E5}), $\te_1=0$. By Lemma \ref{lem1}, $U(X)$ has a multiple root in $\mu_{q+1}$. Moreover, $V(X)=0$ implies that $g$ is non-separable, so that $\deg g \neq Q+1$. Hence $A(X)$ has a root in $\mu_{q+1}$ by Statement 2) of Lemma \ref{lem11}.

From now on we assume both $U(X)$ and $V(X)$ are nonzero, so that $\max\{\deg U, \deg V\} \leq 2$, and so $\deg C \leq 2$ by Statement 1 of Lemma \ref{UVW}. If $\deg g=Q+1$ then $C(X)=1$ and Statement 2) of Lemma \ref{lem11} implies $A(X)$ has no roots in $\mu_{q+1}$. Moreover, by Statement 2) of Lemma \ref{UVW}, there are one or two branch points of $g$, each with ramification multiset $[Q+1]$ or $[1,Q]$. If at least one has ramification multiset $[1,Q]$ then we are in the situation of Statement 3 b). Moreover, from (\ref{E5}), we easily see that there is at least one root of $U$ that is not a root of $V$, which means $U(X)/V(X) \notin \F_{q^2}$. Now assume all branch points of $g$ have ramification multiset $[Q+1]$. In particular $p \nmid e_g(\alpha)$ for all $\alpha \in \P^1(\overline{\F}_q)$. By the Hurwitz genus formula (Lemma \ref{RH}), we see that there must be exactly two such branch points. Both have a unique $g$-preimage in $\P^1(\overline{\F}_q)$. Hence $g(X)$ is $\overline{\F}_q$-linear equivalent to $X^{Q+1}$.

    Finally assume $\deg g \neq Q+1$, so that $C(X) \neq 1$. By Statement 2) of Lemma \ref{lem11}, then $V(X)$ (and $U(X)$) has no roots in $\mu_{q+1}$. Hence the set $\Gamma$ as defined in Lemma \ref{UVW} is of the form $\{\alpha,\bar{\alpha}^{-1}\}$ for some $\alpha \in \F_{q^2} \setminus \mu_{q+1}$. This could happen only if $C(X)$ is a nonzero constant multiple of $V(X)$ (and hence also $U(X)$). If $C(X)=X$, then $c_0=c_3=0$ while at least one of $c_1$ or $c_2$ is nonzero, so $\max\{\deg A, \deg B\}=Q$. If $\deg C=2$, then at least one of $c_0$ or $c_3$ is nonzero, so $\max\{\deg A,\deg B\}=Q+1$. In either case we have $\deg g=Q-1$. Putting these into (\ref{E5}) and dividing both sides by $C(X)^2$ we have
    $$V(X)^{Q-1}=\tilde{c} \, W(g(X))A_0(X)^2$$
    for some constant $\tilde{c} \in \F_{q^2}^*$.

    This immediately implies that $e_g(\gamma)=Q-1=\deg g$ for all $\gamma \in \Gamma$. Since $Q \geq 3$, $g$ has two branch points with unique preimages in $\P^1(\overline{\F}_q)$, and thereby is $\overline{\F}_q$-linear equivalent to $X^{Q-1}$.

    Finally, if $g(X)$ is $\overline{\F}_q$-linear equivalent to $X^{Q+1}$, then $C(X)=1$, by counting multiplicity of the term $X-\gamma$ for all $\gamma \in \Gamma$ on both sides of (\ref{E5}), we see that $U(X)/V(X) \in \F_{q^2}^*$; if $g(X)$ is $\overline{\F}_q$-linear equivalent to $X^{Q-1}$, then $\deg C=2$, we also have $U(X)/V(X) \in \F_{q^2}^*$.

    Now the proof of Lemma \ref{lem11} is complete.
\end{proof}

\section{Classification of $g(X)$}\label{Cla-2}
We will see in later sections that Cases 1) and 2) of Lemma \ref{lem11} do not yield planar functions. So we examine $g(X)$ further according to Case 3) of Lemma \ref{lem11}. We first consider Case 3) a).

\begin{lemma}\label{lem5}
Let $\c \ne \underline{0}$. Assume that $A(X)$ has no roots in $\mu_{q+1}$ and $g(X)$ is $\overline{\F}_q$-linear equivalent to $X^n$ where $n \in \{Q+1,Q-1\}$. Then one of the following holds:
\begin{enumerate}
\item $g(X)=\rho^{-1} \circ X^{Q+1} \circ \sigma$ for some degree-one $\rho, \sigma \in \F_{q^2}(X)$ both of which map $\mu_{q+1}$ onto $\P^1(\F_q)$;

\item $g(X)=\rho^{-1} \circ X^{Q+1} \circ \sigma$ for some degree-one $\rho, \sigma \in \F_{q^2}(X)$ both of which permute $\mu_{q+1}$;

\item $g(X)=\rho^{-1} \circ X^{Q-1} \circ \sigma$ for some degree-one $\rho, \sigma \in \F_{q^2}(X)$ both of which permute $\mu_{q+1}$, and there exists $\alpha \in \F_{q^2} \setminus \mu_{q+1}$ such that $\sigma(\{\alpha,\bar{\alpha}^{-1}\})=\{0,\infty\}$ and $\gcd(A(X),B(X))$ is a constant multiple of $(\bar{\alpha}X-1)(X-\alpha)$.
\end{enumerate}
\end{lemma}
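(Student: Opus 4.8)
The starting point is Lemma \ref{lem11} 3 a): under the hypothesis, $g(X)$ is $\overline{\F}_q$-linear equivalent to $X^n$ with $n\in\{Q+1,Q-1\}$, and this happens exactly when $U(X)/V(X)\in\F_{q^2}^*$. So write $g=\rho^{-1}\circ X^n\circ\sigma$ for some degree-one $\rho,\sigma\in\overline{\F}_q(X)$; the issue is to upgrade the field of definition of $\rho,\sigma$ to $\F_{q^2}$ and to pin down how $\rho,\sigma$ interact with $\mu_{q+1}$. The key structural fact I would exploit is that $g(X)=B(X)/A(X)=X^rD^{(q)}(1/X)/D(X)$-type object: concretely, $g$ sends $\mu_{q+1}$ into $\mu_{q+1}\cup\{0,\infty\}$ because $B=\widehat{A}$ (up to the shape dictated by (\ref{3:bcx})), so $g$ restricted to $\mu_{q+1}$ is "unitary". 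The two branch points $\Lambda=\{\lambda_1,\lambda_2\}$ and the two ramification points $\Gamma=\{\gamma_1,\gamma_2\}$ of $g$ (a single point with multiplicity two when $\te_1=0$) are, by Lemma \ref{UVW} 2 b), either both contained in $\mu_{q+1}$ or both of the form $\{\alpha,\bar\alpha^{-1}\}$ with $\alpha\notin\mu_{q+1}$. This dichotomy on $\Gamma$ (equivalently on $\Lambda$, equivalently on the roots of $U,V,W$) is what splits the conclusion into the three listed cases.

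\textbf{Case analysis.} I would argue as follows. Since $g$ is linear equivalent to a power map, $g$ has exactly two branch points $\lambda_1,\lambda_2$, each with a unique preimage $\gamma_1,\gamma_2$; and $g$ is totally ramified over each $\lambda_i$ at $\gamma_i$. One can take $\sigma$ to be any degree-one map with $\sigma(\gamma_1)=0$, $\sigma(\gamma_2)=\infty$, and then $\rho$ is forced to be a degree-one map with $\rho(\lambda_1)=0$, $\rho(\lambda_2)=\infty$ so that $\rho\circ g\circ\sigma^{-1}=cX^{\pm n}$ for a constant $c$, which can be absorbed into $\rho$ (and the sign $X^{-n}$ vs $X^{n}$ into a further swap composed into $\rho,\sigma$, using $n=Q+1$ or $Q-1$ and $\gcd$ considerations from Lemma \ref{2:gcd}). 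Now:
\emph{(i) If $\Gamma\subset\mu_{q+1}$ and $n=Q+1$:} then $\gamma_1,\gamma_2\in\mu_{q+1}$ and $\lambda_1,\lambda_2\in\mu_{q+1}$. Here I have freedom in choosing $\sigma$; either I can demand $\sigma$ maps $\mu_{q+1}$ onto $\P^1(\F_q)$ (possible precisely when $\gamma_1,\gamma_2\in\mu_{q+1}$, by Lemma \ref{P1} — take $\sigma$ of the form $(\d X+\gamma\bar\d)/(X+\gamma)$ sending two points of $\mu_{q+1}$ to $0,\infty$), and then Lemma \ref{P2} forces $h=\rho\circ g\circ\sigma^{-1}\in\F_q(X)$ but we need it to literally be $X^{Q+1}$ up to the normalization, whence also $\rho$ maps $\mu_{q+1}$ onto $\P^1(\F_q)$; this gives Case 1). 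Or, alternatively, I can choose $\sigma$ to permute $\mu_{q+1}$ (Lemma \ref{mu}), carrying $\gamma_1,\gamma_2$ to a convenient pair in $\mu_{q+1}$, which forces $\rho$ likewise to permute $\mu_{q+1}$; this gives Case 2).
\emph{(ii) If $\Gamma=\{\alpha,\bar\alpha^{-1}\}$ with $\alpha\notin\mu_{q+1}$:} by the part of the proof of Lemma \ref{lem11} 3), this forces $C(X)=\gcd(A,B)$ to be a nonzero constant multiple of $V(X)$, so $\deg g=Q-1$ and $n=Q-1$, and the two ramification points are exactly the two roots $\alpha,\bar\alpha^{-1}$ of $V$, i.e. of $C$. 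Take $\sigma$ permuting $\mu_{q+1}$ with $\sigma(\{\alpha,\bar\alpha^{-1}\})=\{0,\infty\}$ (note the pair $\{\alpha,\bar\alpha^{-1}\}$ is stable under $x\mapsto x^{-q}$, which is exactly the compatibility needed for such a $\sigma$ from Lemma \ref{mu}), and then $\rho$ is forced to be a degree-one map permuting $\mu_{q+1}$ (because the branch points $\Lambda$ lie in $\mu_{q+1}$ by Lemma \ref{UVW} 2 b), since $\Gamma\not\subset\mu_{q+1}$ forces $\Lambda\not\subset\mu_{q+1}$ — wait, that's the wrong direction; rather \ref{UVW} 2 b) says $\Gamma$ and $\Lambda$ are of the \emph{same} type, so here both are of the form $\{\beta,\bar\beta^{-1}\}$). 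This yields Case 3). One also needs that $\gcd(A,B)$ being a constant multiple of $(\bar\alpha X-1)(X-\alpha)$ is just the explicit form of "$C(X)$ a constant multiple of $V(X)$ with roots $\alpha,\bar\alpha^{-1}$", using that $V$ is SCR of degree two.

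\textbf{Main obstacle.} The routine parts are the Möbius bookkeeping (choosing $\sigma$, deriving $\rho$, absorbing constants and the $X\mapsto X^{-1}$ swap). The genuinely delicate step is the \emph{rationality/unitarity} upgrade: starting only from $\overline{\F}_q$-linear equivalence, I must show $\rho,\sigma$ can be chosen in $\F_{q^2}(X)$ with the stated behavior on $\mu_{q+1}$. The clean way is \emph{not} to descend an abstract $\overline{\F}_q$-equivalence, but to \emph{construct} $\sigma$ directly over $\F_{q^2}$: the ramification points $\Gamma$ are roots of $V\in\F_{q^2}[X]$ and, by Lemma \ref{UVW} 2 b), the set $\Gamma$ is stable under $x\mapsto x^{-q}$, so it is either a pair in $\mu_{q+1}$ or a pair $\{\alpha,\bar\alpha^{-1}\}$; in both situations Lemmas \ref{mu} and \ref{P1} supply an explicit $\sigma\in\F_{q^2}(X)$ of the desired type sending $\Gamma$ to $\{0,\infty\}$. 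Then $h:=g\circ\sigma^{-1}\in\F_{q^2}(X)$ has its two branch points and their unique preimages at $0,\infty$; a degree-$(Q\pm1)$ rational function over $\F_{q^2}$ totally ramified over $0$ at $0$ and over $\infty$ at $\infty$ must be $cX^{\pm(Q\pm1)}$ for some $c\in\F_{q^2}^*$, and post-composing with the degree-one map $X\mapsto c^{-1}X$ (or $X\mapsto c^{-1}X^{-1}$) — which lies in $\F_{q^2}(X)$ and, one checks, preserves the relevant structure on $\mu_{q+1}$ because $h$ maps $\mu_{q+1}$ into $\mu_{q+1}\cup\{0,\infty\}$ forcing $c^{Q\pm1}$ to be consistent with $\mu_{q+1}$ — produces the required $\rho^{-1}$. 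Verifying that this final constant $c$ has the right property so that $\rho$ inherits "permutes $\mu_{q+1}$" or "maps $\mu_{q+1}$ onto $\P^1(\F_q)$" is the one place where a short computation with $B=\widehat A$ and the SCR identities (\ref{E1})–(\ref{E4}) is unavoidable.
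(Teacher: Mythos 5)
Your overall strategy --- building $\sigma$ directly over $\F_{q^2}$ from the ramification locus $\Gamma$ (the roots of $V$, stable under $x\mapsto x^{-q}$), invoking Lemma \ref{P1} or Lemma \ref{mu} according to whether $\Gamma\subset\mu_{q+1}$ or $\Gamma=\{\alpha,\bar\alpha^{-1}\}$, and then pinning down $h=\rho\circ g\circ\sigma^{-1}$ as a monomial --- is the right one and is exactly how the paper handles the analogous Lemma \ref{lem6}. However, your case analysis contains two genuine errors that together misplace Family 2). First, in your case (i) the ``alternative'' choice is impossible: any decomposition $g=\rho^{-1}\circ X^{Q+1}\circ\sigma$ forces $\sigma(\Gamma)=\{0,\infty\}$, because degree-one maps preserve ramification indices and the ramification points of $X^{Q+1}$ are exactly $0$ and $\infty$. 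A degree-one map permuting $\mu_{q+1}$ sends $\mu_{q+1}$ onto $\mu_{q+1}$ and so cannot send two points of $\mu_{q+1}$ to $\{0,\infty\}$; a $\sigma$ that ``carries $\gamma_1,\gamma_2$ to a convenient pair in $\mu_{q+1}$'' does not conjugate $g$ to $X^{Q+1}$ at all. Hence $\Gamma\subset\mu_{q+1}$ yields only Family 1), never Family 2).

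Second, in your case (ii) you reverse an implication from the proof of Lemma \ref{lem11}: the paper shows that $\deg g\ne Q+1$ (equivalently $C(X)\ne 1$) \emph{implies} $\Gamma\cap\mu_{q+1}=\emptyset$ and $C$ a constant multiple of $V$, not the converse. It is entirely possible to have $\Gamma=\{\alpha,\bar\alpha^{-1}\}$ with $\alpha\notin\mu_{q+1}$ while $C(X)=1$ and $\deg g=Q+1$; this is precisely the situation producing Family 2), where $\sigma$ and $\rho$ permute $\mu_{q+1}$ and $n=Q+1$. Your write-up therefore derives Family 2) from the wrong hypothesis (in case (i), where it cannot occur) and omits it from the hypothesis where it actually occurs (in case (ii)). The correct trichotomy, recorded in the paper's Proposition following Lemma \ref{lem6}, is: $\Gamma\subset\mu_{q+1}$ gives Family 1); $\Gamma\cap\mu_{q+1}=\emptyset$ with $C(X)=1$ gives Family 2); $\Gamma\cap\mu_{q+1}=\emptyset$ with $C(X)\ne 1$ gives Family 3), and only in this last case does one get $\deg g=Q-1$ together with the stated description of $\gcd(A,B)$. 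The remaining ingredients of your argument (the rationality upgrade, the absorption of the constant $c\in\mu_{q+1}$ or $c$ compatible with $\P^1(\F_q)$ via (\ref{E1})--(\ref{E4}) and Lemma \ref{P2}) are sound once the case split is repaired.
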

We remark that Lemma \ref{lem5} is essentially the same as \cite[Proposition 5.3 (2)]{Ding4}, the only difference is that in \cite[Proposition 5.3 (2)]{Ding4}, there was an extra condition that $g(X)$ permutes $\mu_{q+1}$, but the proof is essentially the same. We omit the proofs.


Next we consider $g(X)$ in Case 3 b) of Lemma \ref{lem11}. This is just (A1) of \cite[Theorem 3.1]{Ding4}. As was shown in \cite{Ding4}, the $g(X)$ appearing in Lemma \ref{lem6} does not yield permutations, which is why such $g(X)$ was not treated in \cite{Ding4}. However for the purpose of our paper, we do need to study this case more deeply. We state the result as follows.
\begin{lemma}\label{lem6}
   Let $\c \ne \underline{0}$. Assume $g(X)$ has at least one branch point with ramification multiset $[1,Q]$. Then either one of the following holds:
\begin{enumerate}
    \item $g(X)=\rho^{-1} \circ \frac{X^{Q+1}}{X+1} \circ \sigma$ for some degree-one $\rho, \sigma \in \F_{q^2}(X)$ both of which map $\mu_{q+1}$ to $\P^1(\F_q)$;

   \item $g(X)=\rho^{-1} \circ \frac{X^Q}{X^{Q+1}+\ep} \circ \sigma$ for some degree-one $\rho, \sigma \in \F_{q^2}(X)$ both of which map $\mu_{q+1}$ to $\P^1(\F_q)$, and $\ep \in \F_q^*$;

   \item $g(X)=\rho^{-1} \circ \frac{X^Q(X-1)}{X+\ep} \circ \sigma$ for some degree-one $\rho, \sigma \in \F_{q^2}(X)$ both of which map $\mu_{q+1}$ to $\P^1(\F_q)$, and $\ep \in \F_q^* \setminus \{-1\}$;

   \item $g(X)=\rho^{-1} \circ \frac{X^Q(\bar{\ep} X+1)}{X+\ep} \circ \sigma$ for some degree-one $\rho, \sigma \in \F_{q^2}(X)$ both of which permute $\mu_{q+1}$, and $\ep \in \F_{q^2}^* \setminus \mu_{q+1}$.
\end{enumerate}
\end{lemma}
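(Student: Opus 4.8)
The plan is to exploit the geometric data gathered in Lemma \ref{UVW}: the hypothesis says $g$ has at least one branch point whose ramification multiset is $[1,Q]$, so in particular $g$ is non-constant, $\deg g = Q+1$, and (via Lemma \ref{UVW}, Statement 2) the polynomials $U(X),V(X),W(X)$ are all nonzero, $\Gamma = \Gamma_{\c}$ is the full set of ramification points and $\Lambda = \Lambda_{\c}$ contains all branch points. First I would record that because some $\lambda\in\Lambda$ has ramification multiset $[1,Q]$, the corresponding preimage set $g^{-1}(\lambda)$ contains a totally ramified point of index $Q$ together with one unramified point, and $p\mid Q$, so the Hurwitz bound of Lemma \ref{RH} is a strict inequality; a short count of $\sum(e_g(\alpha)-1)$ over $\Gamma$ then pins down exactly how many branch points there are and forces the ramification multiset of the \emph{other} branch point (if any) to be one of $[Q+1]$, $[1,Q]$, or for it not to exist. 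This splits the analysis into a small number of configurations.

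Next I would normalise. Using Lemma \ref{P1} or Lemma \ref{mu}, choose a degree-one $\sigma$ over $\F_{q^2}$ (mapping $\mu_{q+1}$ either onto $\P^1(\F_q)$ or onto itself, according to case 2 b) of Lemma \ref{UVW}, i.e.\ according to whether $\Gamma\subset\mu_{q+1}$ or $\Gamma=\{\alpha,\bar\alpha^{-1}\}$) that sends the distinguished ramification point of index $Q$ to $0$ and, when there is a second ramification point, sends it to $\infty$ or to a suitable point of $\P^1(\F_q)$; similarly choose $\rho$ sending the branch point with multiset $[1,Q]$ to $0$ and the other branch point (or $\infty$, depending on the configuration) appropriately. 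Then $h := \rho\circ g\circ\sigma^{-1}$ is a degree-$(Q+1)$ rational function whose branch/ramification structure is completely prescribed: it has a point of ramification index $Q$ over $0$ sitting at $0$, and the remaining ramification (of total index $2Q-2-(Q-1)=Q-1$, distributed as forced above) located at the other prescribed points. Writing $h=N(X)/D(X)$ with $\gcd(N,D)=1$ and matching the factorisations of $N$, $D$, $N-cD$ against the prescribed zero/pole multiplicities, one solves for $h$ up to the residual freedom and obtains, after absorbing that freedom into $\rho,\sigma$, one of the four explicit normal forms $\frac{X^{Q+1}}{X+1}$, $\frac{X^Q}{X^{Q+1}+\ep}$, $\frac{X^Q(X-1)}{X+\ep}$, $\frac{X^Q(\bar\ep X+1)}{X+\ep}$. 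Finally, by Lemma \ref{P2} the rationality constraints on the parameter $\ep$ and on whether $\rho,\sigma$ map $\mu_{q+1}$ to $\P^1(\F_q)$ or permute $\mu_{q+1}$ are forced by the requirement that the composite descend to $\F_q$ together with case 2 b) of Lemma \ref{UVW}; checking that $\ep\in\F_q^*$ (resp.\ $\ep\in\F_q^*\setminus\{-1\}$, resp.\ $\ep\in\F_{q^2}^*\setminus\mu_{q+1}$) in each case is a direct verification, and I would also confirm that each listed $h$ indeed has the claimed ramification data so that no spurious case is introduced.

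The main obstacle I anticipate is the bookkeeping in the normalisation and matching step: there are several sub-configurations (one branch point with the remaining $Q-1$ ramification concentrated at a single point of index $Q-1$, versus split between a second $[1,Q]$-point and the original, versus a genuinely second $[1,Q]$ branch point), and in each one must carefully track whether the relevant special points lie in $\mu_{q+1}$ or form a conjugate pair $\{\alpha,\bar\alpha^{-1}\}$, because this dictates via Lemmas \ref{mu}, \ref{P1}, \ref{P2} whether the resulting $\rho,\sigma$ permute $\mu_{q+1}$ or carry it onto $\P^1(\F_q)$, and hence whether the parameter $\ep$ lands in $\F_q$ or in $\F_{q^2}$. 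The genus/ramification constraints and the identities \eqref{E1}--\eqref{E4} from the proof of Lemma \ref{UVW} do all the real work; the difficulty is purely organisational, and since Lemma \ref{lem6} corresponds to case (A1) of \cite[Theorem 3.1]{Ding4}, I would follow that argument closely, supplying the extra detail needed because \cite{Ding4} only needed to conclude that no permutation arises here rather than to classify the $g(X)$ explicitly.
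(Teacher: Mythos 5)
Your proposal follows essentially the same route as the paper: reduce to $\deg g=Q+1$ and $C(X)=1$, read off the possible branch-point configurations from Lemma \ref{UVW}, normalise by degree-one maps chosen via Lemmas \ref{mu} and \ref{P1} according to whether the ramification and branch points lie in $\mu_{q+1}$ or form conjugate pairs, and identify the normal form of $h=\rho\circ g\circ\sigma^{-1}$ from its prescribed ramification data together with Lemma \ref{P2}. One caveat: since $p\mid Q$ the Hurwitz bound of Lemma \ref{RH} is a strict inequality with slack (and your budget $2\deg g-2$ should be $2Q$, not $2Q-2$), so it does not by itself ``pin down'' the number of branch points or the second multiset --- that information comes from Lemma \ref{UVW} 2(a)--(d), equivalently from the paper's case split on $\gcd(U,V)$ and the location of the roots of $U$ in $\mu_{q+1}$, which you do invoke, so the argument goes through.
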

To prove Lemma \ref{lem6}, we essentially use the ideas in the proof of \cite[Theorem 3.1]{Ding4}.

\begin{proof}
Since $\deg g=Q+1$, from Lemma \ref{lem11}, we see that $C(X)=1$, $A(X)$ has no roots in $\mu_{q+1}$, $U(X)$ and $V(X)$ are both nonzero, $U(X)/V(X) \notin \F_{q^2}^*$, and $g(X)$ is not $\overline{\F}_q$-linear equivalent to a monomial.

We first consider the case where $\gcd(U,V) \neq 1$. Since $U(X)/V(X) \notin \F_{q^2}^*$, we must have $\deg U=\deg V=2$. In addition, $U(X)$ and $V(X)$ only have one common root, say $\alpha$. If $\alpha$ is not in $\mu_{q+1}$, then $\alpha^{-q}$ will also be a common root of $U(X)$ and $V(X)$ since they are SCR polynomials, a contradiction. Hence $\alpha \in \mu_{q+1}$. It cannot be a multiple root either. Hence $\te_1 \in \F_{q^2} \setminus \F_q$ by Lemma \ref{lem1}, which in turn implies that $U(X),V(X)$ and $W(X)$ all have two distinct roots in $\mu_{q+1}$ by (\ref{E3}). Let $\beta_1$ and $\beta_2$ be the other roots of $U(X)$ and $V(X)$ respectively. In addition, let $\Lambda=\{\gamma_1,\gamma_2\}$ be the set of roots of $W$. Using Equation (\ref{E5}), we see that $\alpha$ has multiplicity $Q+1$ as a ramification point of $g$, while $\beta_2$ has multiplicity $Q$. Lemma \ref{UVW} implies that $\alpha$ is the unique $g$-preimage of some element in $\Lambda$. Let us assume that this root is $\gamma_2$. Then $\gamma_1$ is the unique branch point with $g$-ramification multiset $[1,Q]$. In fact Equation (\ref{E5}) implies that $g(\beta_1)=\gamma_1=g(\beta_2)$. Define $$\tilde{\sigma}(X):=\frac{\delta_1(X-\alpha)}{X-\beta_2}$$
and
$$\tilde{\rho}(X):=\frac{\delta_2(X-\gamma_2)}{X-\gamma_1},$$
where $\delta_1,\delta_2 \in \F_{q^2}^*$ such that $\bar{\delta_1}/\delta_1=\alpha/\beta_2$ and $\bar{\delta_2}/\delta_2=\gamma_2/\gamma_1$ respectively. By Lemma \ref{P1}, $\tilde{\sigma},\tilde{\rho}$ both map $\mu_{q+1}$ onto $\P^1(\F_q)$. In addition, $\tilde{\sigma}(\beta_2)=\infty=\tilde{\rho}(\gamma_1)$ and $\tilde{\sigma}(\alpha)=0=\tilde{\rho}(\gamma_2)$. Combining these results, we find that the rational function $h(X):=\tilde{\rho} \circ g \circ \tilde{\sigma}^{-1}(X)$ has degree $Q+1$ and maps $\P^1(\F_q)$ into $\P^1(\F_q)$, and 0 is the unique $h$-preimage of 0, and $\infty$ is another branch point of $h$, with $\infty$ as an $h$-preimage of multiplicity $Q$. Together with Lemma \ref{P2}, we conclude that $h(X)=\frac{\lambda X^{Q+1}}{X+\ep}$ for some $\lambda, \ep \in \F_q^*$. This implies $g(X)=\tilde{\rho}^{-1} \circ h \circ \tilde{\sigma}(X)=\rho^{-1} \circ \frac{X^{Q+1}}{X+1} \circ \sigma(X)$, where $\sigma(X):=\ep^{-1}\tilde{\sigma}(X)$ and $\rho(X):=\lambda^{-1}\ep^{-Q}\tilde{\rho}(X)$ are both of degree-one in $\F_{q^2}(X)$ and map $\mu_{q+1}$ onto $\P^1(\F_q)$. This proves 1) of Lemma \ref{lem6}.

From now on we assume $\gcd(U,V)=1$. We first assume $U(X)$ has a root in $\mu_{q+1}$. If this is the unique root of $U$, then $\te_1=0$ by Lemma \ref{lem1}, and $V(X)$ and $W(X)$ both have a unique multiple root in $\mu_{q+1}$ by Equation (\ref{E3}) too. Let us denote by $\alpha_1,\alpha_2$ and $\beta$ the unique roots of $U(X),V(X)$ and $W(X)$ respectively. Since $\gcd(U,V)=1$, $\alpha_1 \neq \alpha_2$. Since $W$ has only one root, by Lemma \ref{UVW}, we must have $g(\alpha_1)=\beta=g(\alpha_2)$, and $\beta$ is the unique branch point of $g$, whose ramification multiset is $[1,Q]$. Now define
$$\sigma(X):=\frac{\gamma(X-\alpha_1)}{X-\alpha_2}$$
and
$$\tilde{\rho}(X):=\frac{X-\beta}{\delta X-\beta\bar{\delta}},$$
where $\gamma,\delta \in \F_{q^2} \setminus \F_q$, with $\bar{\gamma}/\gamma=\alpha_2/\alpha_1$. Then both $\sigma$ and $\tilde{\rho}$ map $\mu_{q+1}$ onto $\P^1(\F_q)$. In addition, $\sigma(\alpha_1)=\infty$ and $\sigma(\alpha_2)=0=\tilde{\rho}(\beta)$. Combining these we find that the rational function $h(X):=\tilde{\rho} \circ g \circ \sigma^{-1}(X)$ has degree $Q+1$ and maps $\P^1(\F_q)$ into $\P^1(\F_q)$, 0 is the unique branch point of $h$, with 0 and $\infty$ as its $h$-preimages of multiplicity $Q$ and 1 respectively. By Lemma \ref{P2}, $h(X) \in \F_q(X)$. Hence $h(X)=\frac{X^Q}{D(X)}$ for some $D(X) \in \F_q[X]$ of degree $Q+1$ with $D(0) \neq 0$. Since $g(X)=\frac{B(X)}{A(X)}$, we must have $D(X)=A_{\c_0}(X)$ for some $\c_0=(c_{00},c_{01},c_{02},c_{03}) \in \F_q^4$, with $c_{00}c_{03} \neq 0$. Then $D(X)^2h'(X)=-X^Q(c_{00}X^Q+c_{02})$. Since 0 is the unique critical point of $h$, we must have $c_{02}=0$. Hence $h(X)=\frac{X^Q}{c_{00}X^{Q+1}+c_{01}X^Q+c_{03}}$. This implies $g(X)=\tilde{\rho}^{-1} \circ h \circ \sigma(X)=\rho^{-1} \circ \frac{X^Q}{X^{Q+1}+\ep} \circ \sigma(X)$, where $\rho(X)=(c_{00}\tilde{\rho}(X))/(1-c_{01}\tilde{\rho}(X))$ and $\ep=c_{03}/c_{00} \in \F_q^*$, so $\rho(\mu_{q+1})=\P^1(\F_q)$. This proves 2) of Lemma \ref{lem6}.

Now we assume $U$ has two distinct roots in $\mu_{q+1}$. By Lemma \ref{lem1}, this implies $\te_1 \in \F_{q^2} \setminus \F_q$, and $V(X),W(X)$ have two distinct roots in $\mu_{q+1}$ too by Equation (\ref{E3}). Denote by $\Sigma=\{\alpha_1,\alpha_2\}, \Gamma=\{\beta_1,\beta_2\}$ and $\Lambda=\{\gamma_1,\gamma_2\}$ the set of roots of $U(X),V(X)$ and $W(X)$ respectively. Since $\gcd(U,V)=1$, we have $\Sigma \cap \Gamma=\emptyset$. By Lemma \ref{UVW}, $\Gamma$ is the set of ramification points of $g$, each of which has multiplicity $Q$. In addition, we may assume $g(\alpha_i)=\gamma_i=g(\beta_i)$ for $i=1,2$. Then both elements of $\Lambda$ have $g$-ramification multiset $[1,Q]$. Now define
$$\tilde{\sigma}(X):=\frac{\delta_1(X-\beta_2)}{X-\beta_1}$$
and
$$\tilde{\rho}(X):=\frac{\delta_2(X-\gamma_2)}{X-\gamma_1},$$
where $\delta_1,\delta_2 \in \F_{q^2}^*$ such that $\bar{\delta}_1/\delta_1=\beta_2/\beta_1$ and $\bar{\delta}_2/\delta_2=\gamma_2/\gamma_1$ respectively. Then $\tilde{\sigma},\tilde{\rho}$ both map $\mu_{q+1}$ onto $\P^1(\F_q)$. In addition, $\tilde{\sigma}(\beta_1)=\infty=\tilde{\rho}(\gamma_1)$ and $\tilde{\sigma}(\beta_2)=0=\tilde{\rho}(\gamma_2)$. Combining these we find that the rational function $h(X):=\tilde{\rho} \circ g \circ \tilde{\sigma}^{-1}(X)$ has degree $Q+1$, maps $\P^1(\F_q)$ into $\P^1(\F_q)$, 0 and $\infty$ are the branch points of $h$, as $h$-preimages of 0 and $\infty$ of multiplicity $Q$ respectively. Together with Lemma \ref{P2}, we have $h(X)=\frac{\lambda X^Q(X+\ep_1)}{X+\ep_2}$ for some $\lambda, \ep_1, \ep_2 \in \F_q^*$, with $\ep_1 \neq \ep_2$. This implies that $g(X)=\tilde{\rho}^{-1} \circ h \circ \tilde{\sigma}(X)=\rho^{-1} \circ \frac{X^Q(X-1)}{X+\ep} \circ \sigma(X)$, where $\sigma(X):=-\ep_1^{-1}\tilde{\sigma}(X)$ and $\rho(X):=-\lambda^{-1}\ep_1^{-Q}\tilde{\rho}(X)$ are both of degree-one in $\F_{q^2}(X)$ and map $\mu_{q+1}$ onto $\P^1(\F_q)$, and $\ep=-\ep_2/\ep_1 \in \F_q^* \setminus \{-1\}$. This proves 3) of Lemma \ref{lem6}.

Finally we assume $U$ has no roots in $\mu_{q+1}$. By Lemma \ref{lem1}, this implies $\te_1 \in \F_q^*$, and $V(X),W(X)$ both have no roots in $\mu_{q+1}$ by Equation (\ref{E3}). If $\deg U=1$, define $\Sigma=\{0,\infty\}$. If $\deg U=2$, define $\Sigma$ as the set of roots of $U(X)$, which is of the form $\{\alpha,\bar{\alpha}^{-1}\}$ for some $\alpha \in \F_{q^2}^* \setminus \mu_{q+1}$. To combine these two cases, let us simply write $\Sigma=\{\alpha,\bar{\alpha}^{-1}\}$ where $\alpha \in \P^1(\F_{q^2}) \setminus \mu_{q+1}$. Assume $\Gamma=\{\beta,\bar{\beta}^{-1}\}$ and $\Lambda=\{\gamma,\bar{\gamma}^{-1}\}$ as defined in Lemma \ref{UVW} respectively, where $\beta,\gamma \in \P^1(\F_{q^2}) \setminus \mu_{q+1}$. Since $\gcd(U,V)=1$, we have $\Sigma \cap \Gamma=\emptyset$. Hence by Lemma \ref{UVW}, $\Gamma$ is the set of ramification points of $g$, each of which has multiplicity $Q$. In addition, we may assume $g(\alpha)=\gamma=g(\beta)$. Since $g(\bar{X}^{-1})=\overline{g(X)}^{-1}$, we automatically have $g(\bar{\alpha}^{-1})=\bar{\gamma}^{-1}=g(\bar{\beta}^{-1})$. Then both elements of $\Lambda$ have $g$-ramification multiset $[1,Q]$. Now define $\sigma(X):=-\frac{\bar{\beta}X-1}{X-\beta}$ if $\beta \in \F_{q^2} \setminus \mu_{q+1}$ and $\sigma(X):=X$ if $\beta=\infty$. Similarly, define $\tilde{\rho}(X)=-\frac{\bar{\gamma}X-1}{X-\gamma}$ if $\gamma \in \F_{q^2} \setminus \mu_{q+1}$ and $\tilde{\rho}(X)=X$ if $\gamma=\infty$. In all these cases $\sigma,\tilde{\rho}$ both permute $\mu_{q+1}$. In addition, $\sigma(\beta)=\infty=\tilde{\rho}(\gamma)$ and $\sigma(\bar{\beta}^{-1})=0=\tilde{\rho}(\bar{\gamma}^{-1})$. Combining these we find that the rational function $h(X):=\tilde{\rho} \circ g \circ \sigma^{-1}(X) \in \F_{q^2}(X)$ has degree $Q+1$, maps $\mu_{q+1}$ into $\mu_{q+1}$, 0 and $\infty$ are the branch points of $h$, as $h$-preimages of 0 and $\infty$ of multiplicity $Q$ respectively. Hence $h(X)=X^Qh_1(X)$ for some degree-one $h_1(X) \in \F_{q^2}(X)$ such that $h_1(0), h_1(\infty) \notin \{0,\infty\}$. Since both $h(X)$ and $X^Q$ map $\mu_{q+1}$ to $\mu_{q+1}$, so is $h_1(X)$. Therefore $h_1(X)$ permutes $\mu_{q+1}$ and by Lemma \ref{mu} it must be of the form $\frac{\bar{\ep}_2X+\bar{\ep}_1}{\ep_1X+\ep_2}$ for some $\ep_1,\ep_2 \in \F_{q^2}^*$, such that $\ep_1\bar{\ep}_1 \neq \ep_2\bar{\ep}_2$. This implies $g(X)=\tilde{\rho}^{-1} \circ h \circ \sigma(X)=\rho^{-1} \circ \frac{X^Q(\bar{\ep}X+1)}{X+\ep} \circ \sigma(X)$, where $\rho(X):=\frac{\ep_1}{\bar{\ep}_1}\tilde{\rho}(X) \in \F_{q^2}(X)$ is of degree-one and permutes $\mu_{q+1}$, and $\ep=\frac{\ep_2}{\ep_1} \in \F_{q^2}^* \setminus \mu_{q+1}$. This proves 4) of Lemma \ref{lem6}. Now the proof of Lemma \ref{lem6} is complete.
\end{proof}

The proofs of Lemma \ref{lem5} and \ref{lem6} actually provided detailed conditions as to which family that $g(X)$ belongs. We record the results here for future references.
\begin{proposition} Let $\c \ne \underline{0}$. Assume that $g(X)$ is non-constant and $A(X)$ has no roots in $\mu_{q+1}$. Then both $U(X)$ and $V(X)$ as defined in Eq (\ref{3:WUV}) are nonzero. Moreover, denote by $\Gamma$ the set of roots of $V(X)$, and let $C(X)=\gcd(A(X),B(X))$.
\begin{itemize}
\item[(i)] If $U(X)/V(X) \in \F_{q^2}^*$ is constant, then
\begin{itemize}
\item[(1)] $\Gamma \subset \mu_{q+1}  \iff $ $g(X)$ belongs to Family 1) of Lemma \ref{lem5};

\item[(2)] $\Gamma \cap \mu_{q+1}=\emptyset , C(X)=1  \iff  g(X)$ belongs to Family 2) of Lemma \ref{lem5};

\item[(3)] $\Gamma \cap \mu_{q+1}=\emptyset, C(X) \ne 1  \iff  g(X)$ belongs to Family 3) of Lemma \ref{lem5};

\end{itemize}
\item[(ii)] If $U(X)/V(X)$ is non-constant, then
\begin{itemize}
\item[(1)] $\gcd(U(X),V(X)) \ne 1 \iff g(X)$ belongs to Family 1) of Lemma \ref{lem6};

\item[(2)] $\gcd(U(X),V(X))=1, U(X)$ has a unique root in $\mu_{q+1}  \iff  g(X)$ belongs to Family 2) of Lemma \ref{lem6};

\item[(3)] $\gcd(U(X),V(X))=1, U(X)$ has two distinct roots in $\mu_{q+1}  \iff  g(X)$ belongs to Family 3) of Lemma \ref{lem6};

\item[(4)] $\gcd(U(X),V(X))=1, U(X)$ has no roots in $\mu_{q+1}  \iff  g(X)$ belongs to Family 4) of Lemma \ref{lem6}.

\end{itemize}
\end{itemize}
\end{proposition}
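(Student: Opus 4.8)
The plan is to read the Proposition off from the proofs of Lemmas~\ref{lem11}, \ref{lem5} and~\ref{lem6}, organising everything around three invariants of $g=g_{\c}$: its degree (which will be $Q+1$ or $Q-1$), the common factor $C(X)=\gcd(A(X),B(X))$, and whether the ramification points of $g$ — which by Statement 2c) of Lemma~\ref{UVW} are exactly the points of $\Gamma$, i.e.\ the roots of $V(X)$ together with a point at infinity when $\deg V=1$ (in which case the finite root is $0$) — lie inside $\mu_{q+1}$ or avoid it.

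First I would settle the assertion that $U(X)$ and $V(X)$ are both nonzero. By Statement 1) of Lemma~\ref{lem11} they cannot both vanish, since $g$ is non-constant. If exactly one vanished, then running the two bullet cases ``$U=0$, $V\neq0$'' and ``$V=0$, $U\neq0$'' from the proof of Statement 3) of Lemma~\ref{lem11} yields $\te_1=0$ together with $\deg g\neq Q+1$, whence Statement 2) of Lemma~\ref{lem11} would force $A(X)$ to have a root in $\mu_{q+1}$, contrary to hypothesis. Thus $U(X),V(X)\neq0$; moreover $W(X)\neq0$, since otherwise~(\ref{E4}) would make $U(X)V(X)^Q\equiv0$. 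In particular the hypotheses of Statement 2) of Lemma~\ref{UVW} are met, so $\Gamma\neq\emptyset$ and, by Statement 2b) of that lemma, $\Gamma$ is either contained in $\mu_{q+1}$ or disjoint from it. Next I would note that cases (i) and (ii) match Cases 3a) and 3b) of Lemma~\ref{lem11}: by Case 3a) of Lemma~\ref{lem11}, the condition $U/V\in\F_{q^2}^*$ is equivalent to $g$ being $\overline{\F}_q$-linear equivalent to $X^{Q+1}$ or $X^{Q-1}$, i.e.\ to the hypothesis of Lemma~\ref{lem5}, while the complementary case (ii) is exactly Case 3b), the hypothesis of Lemma~\ref{lem6}. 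Hence in case (i), $g$ belongs to one of the three Families of Lemma~\ref{lem5}, and in case (ii), to one of the four Families of Lemma~\ref{lem6}.

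For case (i) I would read the three invariants directly off the statement of Lemma~\ref{lem5}: Families 1) and 2) have $\deg g=Q+1$, hence $C(X)=1$ (a one-line degree count, since $\deg g=\max(\deg A_0,\deg B_0)\le Q+1-\deg C$ when $A=CA_0$, $B=CB_0$), while Family 3) has $\deg g=Q-1$ and $C(X)$ of degree two; and since the ramification points of $g$ equal $\sigma^{-1}(\{0,\infty\})$, they lie in $\mu_{q+1}$ when $\sigma$ maps $\mu_{q+1}$ onto $\P^1(\F_q)$ (Family 1)) and outside $\mu_{q+1}$ when $\sigma$ permutes $\mu_{q+1}$ (Families 2) and 3)). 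This gives, for each $i$, the inclusion of Family~$i$ of Lemma~\ref{lem5} into the $i$-th condition of part (i). The three conditions of (i) are pairwise disjoint and, by Statement 2b) of Lemma~\ref{UVW}, jointly cover all $g$ arising in case (i); since Lemma~\ref{lem5} places every such $g$ in one of the three Families, each of the three inclusions is forced to be an equality, which is precisely the three iff's of (i).

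For case (ii) the forward implications are already contained in the proof of Lemma~\ref{lem6}, which is organised into exactly the four subcases ``$\gcd(U,V)\neq1$'', ``$\gcd(U,V)=1$ and $U$ has a unique root in $\mu_{q+1}$'', ``$\gcd(U,V)=1$ and $U$ has two distinct roots in $\mu_{q+1}$'', ``$\gcd(U,V)=1$ and $U$ has no root in $\mu_{q+1}$'', and is shown there to produce Families 1), 2), 3), 4) respectively; by Lemma~\ref{lem1} applied to the nonzero SCR polynomial $U(X)$ (of degree at most two) these four conditions partition case (ii). The reverse implications then reduce to the four Families of Lemma~\ref{lem6} being pairwise distinct, and this is the step I expect to be the main obstacle. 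I would settle it by comparing coarse geometric data of the four model functions: only $\frac{X^{Q+1}}{X+1}$ (Family 1)) has a branch point with ramification multiset $[Q+1]$; only $\frac{X^{Q}}{X^{Q+1}+\ep}$ (Family 2)) has a single branch point; and Family 4) is distinguished from Families 1)--3) by having all of its ramification points outside $\mu_{q+1}$, since there the maps $\sigma,\rho$ permute $\mu_{q+1}$ rather than carrying it onto $\P^1(\F_q)$. Granted this, the same ``partition against partition'' argument as in case (i) promotes the four implications to iff's; the only other point requiring a little care is the exact action on $\mu_{q+1}$ of the degree-one maps appearing in \cite[Proposition~5.3(2)]{Ding4} that underlie Lemma~\ref{lem5}, for which one may simply invoke that reference.
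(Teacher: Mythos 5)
Your proposal is correct and follows essentially the same route as the paper, which states this Proposition without a separate proof precisely because (as you observe) the conditions are read off from Lemmas \ref{lem11}, \ref{UVW}, \ref{lem5} and the case division in the proof of Lemma \ref{lem6}. The extra glue you supply --- the partition-versus-partition argument and the pairwise distinctness of the four families of Lemma \ref{lem6} via branch-point data and the position of ramification points relative to $\mu_{q+1}$ --- is exactly what the paper leaves implicit, and it checks out.
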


\section{Linear Equivalence Classes of $f(X)$}\label{AEf}
In this section we turn Lemmas \ref{lem5} and \ref{lem6} about the classification of $g(X)$ into linear equivalence classes of $f(X)$. We shall remark that the idea is already implicit in the proof of \cite[Theorem 1.2]{Ding4}.


\begin{lemma}\label{lem7}
We use notation from Section \ref{Cla}. For $\c \ne \underline{0}$, suppose $\deg g=Q+1$.
    \begin{enumerate}
        \item If $g(X)=\rho^{-1} \circ \frac{A_{\c_1}(X)}{A_{\c_0}(X)} \circ \sigma$ for some degree-one $\rho, \sigma \in \F_{q^2}(X)$ both of which map $\mu_{q+1}$ to $\P^1(\F_q)$ and some $\c_0, \c_1 \in \F_q^4$, then $f(X)$ is linear equivalent to the $(Q,Q)$-biprojective function $P: \F_q^2 \to \F_q^2$ defined by $$P(x,y)=\left(y^{Q+1}A_{\c_1}(x/y),y^{Q+1}A_{\c_0}(x/y)\right).$$
        \item If $g=\rho^{-1} \circ g_{\c_0}(X) \circ \sigma$ for some degree-one $\rho, \sigma \in \F_{q^2}(X)$ both of which permute $\mu_{q+1}$ and some $\c_0 \in \F_{q^2}^4$, then $f(X)$ is linear equivalent to $f_{\c_0}(X)$.
    \end{enumerate}
\end{lemma}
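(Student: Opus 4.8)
The plan is to exploit the structural relationship, recalled in the excerpt, that $f_{\c}(X) = X^{Q+1} A_{\c}(X^{q-1})$ and that the accompanying rational function $g_{\c}(X)$ is obtained from the pair $(A_{\c},B_{\c})$, where $B_{\c}$ is a conjugate-reciprocal twist of $A_{\c}$. The key idea is that linear equivalence of the polynomials $f$ corresponds, on the level of rational functions, to pre- and post-composition of $g$ by degree-one maps that either permute $\mu_{q+1}$ (leading to another univariate $f_{\c_0}$) or send $\mu_{q+1}$ onto $\P^1(\F_q)$ (leading to a biprojective function over $\F_q$). Concretely, I would first record how the substitution $X = x + \zeta y$ identifying $\F_{q^2}$ with $\F_q^2$ interacts with the subfield property $f_{\c}(aX) = a^{Q+1} f_{\c}(X)$ for $a \in \F_q^*$: this is exactly what makes $f_{\c}$ linear equivalent to a $(Q,Q)$-biprojective function, and the pair of homogeneous components of that biprojective function is precisely $(y^{Q+1}A_{\c_1}(x/y), y^{Q+1}A_{\c_0}(x/y))$ when $g_{\c}$ has numerator/denominator built from $\c_1,\c_0$ after a change of variable.

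For part (2), I would argue as follows. Writing $g(X) = \rho^{-1} \circ g_{\c_0} \circ \sigma$ with $\rho,\sigma$ permuting $\mu_{q+1}$, Lemma \ref{mu} gives explicit Möbius forms $\rho(X) = (\bar\beta X + \bar\alpha)/(\alpha X + \beta)$ and similarly for $\sigma$. Pre-composition by $\sigma$ corresponds to replacing the variable $X$ of $A_{\c}$ by a Möbius transform whose coefficients are conjugate-symmetric in the $\alpha \mapsto \bar\alpha$ sense; on the level of $f_{\c}(X) = X^{Q+1}A_{\c}(X^{q-1})$ this translates into substituting $X \mapsto uX$ and/or $X \mapsto X^q$ (allowable linear permutations of $\F_{q^2}$), together with scaling $f$ by a nonzero constant, which is post-composition by a linear permutation. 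Post-composition by $\rho^{-1}$ similarly corresponds to the analogous operations from the "output" side. The upshot is that after these genuinely \emph{linear} (not merely affine) substitutions, $f_{\c}$ becomes $f_{\c_0}$ up to a nonzero scalar multiple, and a nonzero scalar multiple is a linear permutation, so $f \sim f_{\c_0}$. The main technical content is bookkeeping: verifying that the Möbius data of $\rho,\sigma$ on $\mu_{q+1}$ lift to honest $\F_q$-linear permutations of $\F_{q^2}$ acting on $f$, which uses the conjugate-reciprocal symmetry of $A_{\c}$ versus $B_{\c}$ in an essential way.

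For part (1), the same mechanism applies but now $\rho,\sigma$ map $\mu_{q+1}$ \emph{onto} $\P^1(\F_q)$; by Lemma \ref{P1} these have the form $(\delta X + \gamma\bar\delta)/(X+\gamma)$ with $\gamma \in \mu_{q+1}$, $\delta \in \F_{q^2}\setminus\F_q$. Composing $g$ with such maps is exactly the operation described in Lemma \ref{P2} that produces a rational function over $\F_q$; reading this through the identification $X = x + \zeta y$ converts $f_{\c}$ into a function $\F_q^2 \to \F_q^2$, and the hypothesis $g = \rho^{-1}\circ (A_{\c_1}/A_{\c_0}) \circ \sigma$ with $\c_0,\c_1 \in \F_q^4$ forces the two components of this biprojective function to be $y^{Q+1}A_{\c_1}(x/y)$ and $y^{Q+1}A_{\c_0}(x/y)$ respectively (these are genuine polynomials in $x,y$ since $A_{\c_i}$ has degree $Q+1$, homogenized of degree $Q+1$). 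The change of basis implementing $X = x+\zeta y$ and the $\F_q$-linear maps coming from $\rho,\sigma$ are all linear permutations, so the resulting biprojective $P$ is linear equivalent to $f$.

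The step I expect to be the main obstacle is making precise, with explicit matrices, how a degree-one $\rho$ (resp.\ $\sigma$) acting on $\mu_{q+1}$ or carrying $\mu_{q+1}$ onto $\P^1(\F_q)$ induces an $\F_q$-\emph{linear} permutation of $\F_{q^2}$ (or of $\F_q^2$) that conjugates $f_{\c}$ into the claimed form. The subtlety is that Möbius transformations are not linear on the affine line, yet the combination with the $X^{q-1}$ and $X^{Q+1}$ weights, together with the SCR symmetry linking $B_{\c}$ to $A_{\c}$, conspires to make the induced action on $f$ linear; one must check that no constant term or affine shift appears. Once that lifting statement is established, both parts follow by assembling the composed maps and verifying degrees match ($\deg g = Q+1$ guarantees we are in the "generic" regime where $A_{\c}$ has degree $Q+1$, so the homogenizations behave as expected). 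I would present the lifting as the one nontrivial computation and treat the rest as routine composition of linear permutations.
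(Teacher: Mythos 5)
Your overall strategy coincides with the paper's: write $\rho,\sigma$ in the explicit M\"obius forms of Lemmas \ref{mu} and \ref{P1}, expand the composition, and read off linear permutations conjugating $f$ into the target form. However, there is a genuine gap at exactly the step you flag as the main obstacle, and your tentative description of how to close it would not work. You propose that precomposition by $\sigma$ translates into substituting $X \mapsto uX$ and/or $X \mapsto X^q$ together with scalar rescalings. Compositions of such maps only produce the monomial linearized permutations $X\mapsto uX$ and $X\mapsto uX^q$; they can never produce the general $\F_q$-linear permutation $X \mapsto \alpha_1 \bar{X} + \alpha_2 X$ of $\F_{q^2}$ (respectively, in part (1), the map $X \mapsto \left(\ep\alpha \bar{X}+\bar{\ep}\bar{\alpha}X,\ \ep \bar{X}+\bar{\ep}X\right)$ from $\F_{q^2}$ to $\F_q^2$) that the argument actually requires. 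So the lifting statement cannot be reduced to Frobenius-plus-scaling bookkeeping.

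The missing idea is the following pivot, which your plan never makes explicit. Since $\deg g=Q+1$, the fraction $B(X)/A(X)$ is already in lowest terms with $\max\{\deg A,\deg B\}=Q+1$. On the other hand, expanding $\rho^{-1}\circ h\circ\sigma$ and clearing denominators by the factor $(X+\gamma)^{Q+1}$ (resp.\ $(\alpha_1X+\alpha_2)^{Q+1}$) exhibits $g(X)$ as $c\,X^{Q+1}D^{(q)}(1/X)/D(X)$ for an explicit polynomial $D\in\F_{q^2}[X]$ built from the M\"obius coefficients and $\c_0,\c_1$. Uniqueness of the reduced representation then forces $A(X)=\lambda D(X)$ for some scalar $\lambda$. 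Only now does one substitute $X^{q-1}=\bar{X}/X$ into $f(X)=X^{Q+1}A\left(X^{q-1}\right)$: each linear factor $\alpha_1 X+\alpha_2$ of the M\"obius data homogenizes to $\alpha_1\bar{X}+\alpha_2X$, which is an honest $\F_q$-linear map (a permutation precisely when $\alpha_1\bar{\alpha}_1\neq\alpha_2\bar{\alpha}_2$, and $\F_q$-valued exactly when the coefficients are conjugate as in Lemma \ref{P1}). Homogeneity of degree $Q+1$ in $X,\bar{X}$ throughout is what rules out the affine shifts you were worried about. Without the identification $A=\lambda D$ you only know the shape of $g$, not of $A$, and hence not of $f$; with it, both parts reduce to the routine verification that the resulting $L_1,L_2$ are bijective.
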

\begin{proof}
1). Since $\rho, \sigma$ map $\mu_{q+1}$ to $\P^1(\F_q)$, there exist $\alpha,\beta \in \F_{q^2} \setminus \F_q$ and $\gamma,\delta \in \mu_{q+1}$ such that
        $$\sigma(X)=\frac{\alpha X+\gamma\bar{\alpha}}{X+\gamma}$$
        and
        $$\rho(X)=\frac{\beta X+\delta\bar{\beta}}{X+\delta}.$$
        Clearly
                $$\rho^{-1}(X)=-\frac{\delta(X-\bar{\beta})}{X-\beta}.$$
        We may expand the expression $g(X)=\rho^{-1} \circ \frac{A_{\c_1}(X)}{A_{\c_0}(X)} \circ \sigma$ and obtain
        \begin{align*}
            g(X)&=-\frac{\delta(X-\bar{\beta})}{X-\beta} \circ \frac{A_{\c_1}(X)}{A_{\c_0}(X)} \circ \frac{\alpha X+\gamma\bar{\alpha}}{X+\gamma}\\
            &=-\frac{\delta\left[A_{\c_1}\left(\frac{\alpha X+\gamma\bar{\alpha}}{X+\gamma}\right)-\bar{\beta}A_{\c_0}\left(\frac{\alpha X+\gamma\bar{\alpha}}{X+\gamma}\right)\right]}{A_{\c_1}\left(\frac{\alpha X+\gamma\bar{\alpha}}{X+\gamma}\right)-\beta A_{\c_0}\left(\frac{\alpha X+\gamma\bar{\alpha}}{X+\gamma}\right)}.
        \end{align*}
        Multiplying $(X+\gamma)^{Q+1}$ on both the numerator and denominator of the right side, we obtain
        \begin{align} \label{5:gx}
            g(X)&=-\frac{\delta\gamma^{Q+1}X^{Q+1}D^{(q)}(1/X)}{D(X)},
        \end{align}
where $$D(X):=(X+\gamma)^{Q+1}\left[A_{\c_1}\left(\frac{\alpha X+\gamma\bar{\alpha}}{X+\gamma}\right)-\beta A_{\c_0}\left(\frac{\alpha X+\gamma\bar{\alpha}}{X+\gamma}\right)\right].$$ Since $A_{\c_1}(X),A_{\c_0}(X) \in \F_q[X]$, we have $D(X)  \in \F_{q^2}[X]$.

Comparing (\ref{5:gx}) with the expression $g(X)=\frac{B(X)}{A(X)}$ and using the condition $\deg g=Q+1$, we shall have
\begin{eqnarray*} \max\left\{\deg B,\deg A\right\}&=&\max\left\{\deg X^{Q+1}D^{(q)}(1/X), \deg D\right\}=Q+1,\\ \gcd\left(B(X),A(X)\right)&=&\gcd\left(X^{Q+1}D^{(q)}(1/X),D(X)\right)=1.
 \end{eqnarray*}
This implies that $$A(X)=\lambda D(X),$$
for some $\lambda \in \F_{q^2}^*$. Now using
$$f(X)=X^{Q+1}A\left(X^{q-1}\right)=X^{Q+1}A\left(\bar{X}/X\right),$$
we obtain
$$f(X)=\lambda (\bar{X}+\gamma X)^{Q+1}\left[A_{\c_1}\left(\frac{\alpha \bar{X}+\gamma\bar{\alpha}X}{\bar{X}+\gamma X}\right)-\beta A_{\c_0}\left(\frac{\alpha\bar{X}+\gamma\bar{\alpha} X}{\bar{X}+\gamma X}\right)\right].$$

The above expression of $f(X)$ can be further simplified. Writing $\gamma=\bar{\ep}/\ep$ for some $\ep \in \F_{q^2}^*$, and letting
\begin{eqnarray} \left\{\begin{array}{lll} x&=&\ep\alpha\bar{X}+\bar{\ep}\bar{\alpha}X, \\ y&=&\ep\bar{X}+\bar{\ep}X,
\end{array} \right.
\end{eqnarray}
clearly $x,y \in \F_q$ for any $X \in \F_{q^2}$, we can write $f(X)$ as
        $$f(X)=\frac{\lambda}{\ep^{Q+1}}y^{Q+1}\left[A_{\c_1}\left(\frac{x}{y}\right)-\beta A_{\c_0}\left(\frac{x}{y}\right)\right]=L_1 \circ P \circ L_2(X),$$
        where
        $$L_1(x,y)=\frac{\lambda}{\ep^{Q+1}}\left(x-\beta y\right): \F_q^2 \to \F_{q^2}$$
        and
        $$L_2(X)=(x,y)=\left(\ep\alpha\bar{X}+\bar{\ep}\bar{\alpha}X,\ep\bar{X}+\bar{\ep}X\right): \F_{q^2} \to \F_q^2.$$
        Clearly $L_1$ and $L_2$ are linear functions. They are also permutation: $L_1$ is bijective since $\lambda \ep \ne 0$ and $\beta \in \F_{q^2} \setminus \F_q$; $L_2$ is bijective since
        $$\det \begin{bmatrix}
            \ep\alpha & \bar{\ep}\bar{\alpha}\\
            \ep & \bar{\ep}
        \end{bmatrix} =\ep\bar{\ep}(\alpha-\bar{\alpha}) \ne 0.$$
        Therefore $f$ is linear equivalent to $P$ over $\F_{q^2}$.

2). Since $\rho,\sigma$ permute $\mu_{q+1}$, there exist $\alpha_i,\beta_i \in \F_{q^2}$ for $i=1,2$ such that $\alpha_1\bar{\alpha}_1 \neq \alpha_2\bar{\alpha}_2, \beta_1\bar{\beta}_1 \neq \beta_2\bar{\beta}_2$,
        $$\sigma(X)=\frac{\bar{\alpha}_2X+\bar{\alpha}_1}{\alpha_1X+\alpha_2}, \quad \rho(X)=\frac{\bar{\beta}_2X+\bar{\beta}_1}{\beta_1X+\beta_2}.$$
        As $g_{\c_0}(X)=\frac{B_{\c_0}(X)}{A_{\c_0}(X)}$ and $g(X)=\rho^{-1} \circ g_{\c_0}(X) \circ \sigma$, we have
        \begin{align*}
            g(X)&=-\frac{\beta_2X-\bar{\beta}_1}{\beta_1X-\bar{\beta}_2} \circ \frac{B_{\c_0}(X)}{A_{\c_0}(X)} \circ \frac{\bar{\alpha}_2X+\bar{\alpha}_1}{\alpha_1X+\alpha_2}\\
            &=-\frac{\beta_2B_{\c_0}\left(\frac{\bar{\alpha}_2X+\bar{\alpha}_1}{\alpha_1X+\alpha_2}\right)
            -\bar{\beta}_1A_{\c_0}\left(\frac{\bar{\alpha}_2X+\bar{\alpha}_1}{\alpha_1X+\alpha_2}\right)}{\beta_1B_{\c_0}\left(\frac{\bar{\alpha}_2X+\bar{\alpha}_1}{\alpha_1X+\alpha_2}\right)-\bar{\beta}_2A_{\c_0}\left(\frac{\bar{\alpha}_2X+\bar{\alpha}_1}{\alpha_1X+\alpha_2}\right)}.
            \end{align*}
        Similarly we can obtain
        \begin{align*}
        g(X)&=\frac{X^{Q+1}D^{(q)}(1/X)}{D(X)},
        \end{align*}
        where $$D(X):=(\alpha_1X+\alpha_2)^{Q+1}\left[\beta_1B_{\c_0}\left(\frac{\bar{\alpha}_2X+\bar{\alpha}_1}{\alpha_1X+\alpha_2}\right)-\bar{\beta}_2A_{\c_0}\left(\frac{\bar{\alpha}_2X+\bar{\alpha}_1}{\alpha_1X+\alpha_2}\right)\right]\in \F_{q^2}[X].$$ Also using similar argument as in 1), and noting that $\deg g=Q+1$, we have $A(X)=\gamma D(X)$ for some $\gamma \in \F_q^*$. This further implies that
            \begin{align*}
            f(X)&=\gamma(\alpha_1\bar{X}+\alpha_2X)^{Q+1}\left[\beta_1B_{\c_0}\left(\frac{\bar{\alpha}_2\bar{X}+\bar{\alpha}_1X}{\alpha_1\bar{X}+\alpha_2X}\right)-\bar{\beta}_2A_{\c_0}\left(\frac{\bar{\alpha}_2\bar{X}+\bar{\alpha}_1X}{\alpha_1\bar{X}+\alpha_2X}\right)\right]\\
            &=L_1 \circ f_{\c_0}(X) \circ L_2,
            \end{align*}
            where $L_1,L_2$ are given by
            $$L_1(X)=\gamma(\beta_1\bar{X}-\bar{\beta}_2X), \quad L_2(X)=\alpha_1\bar{X}+\alpha_2X.$$
        The maps $L_1,L_2: \F_{q^2} \to \F_{q^2}$ are linear permutations because $\alpha_1\bar{\alpha}_1 \neq \alpha_2\bar{\alpha}_2$ and $$(\gamma\beta_1)(\overline{\gamma\beta_1})=\gamma^2\beta_1\bar{\beta}_1\neq \gamma^2\beta_2\bar{\beta}_2=(-\gamma\bar{\beta}_2)(\overline{-\gamma\bar{\beta}_2}).$$
        Hence $f(X)$ is linear equivalent to $f_{\c_0}(X)$ over $\F_{q^2}$. This completes the proof of Lemma \ref{lem7}.
\end{proof}
Assuming that $g(X)$ is non-constant and $A(X)$ has no roots in $\mu_{q+1}$, we can list all the possible linear equivalence classes of $f(X)$ as follows.
\begin{theorem}\label{AE2}
    If $g(X)$ is non-constant and $A(X)$ has no roots in $\mu_{q+1}$, then $f(X)$ is linear equivalent to one of the following functions:
    \begin{enumerate}
        \item $P_0(x,y)=(x^{Q+1},y^{Q+1}): \F_q^2 \to \F_q^2$;
        \item $f_0(X)=X^{Q+1}$;
        \item $f_1(X)=X^{Q+q}$;
        \item $P_1(x,y)=(x^{Q+1},xy^Q+y^{Q+1}): \F_q^2 \to \F_q^2$;
        \item $P_2(x,y)=(x^Q y, x^{Q+1}+\ep y^{Q+1}): \F_q^2 \to \F_q^2$ for some $\ep \in \F_q^*$;
        \item $P_3(x,y)=(x^{Q+1}-x^Q y, xy^Q+\ep y^{Q+1}): \F_q^2 \to \F_q^2$ for some $\ep \in \F_q^* \setminus \{-1\}$;
        \item $f_2(X)=X^{Q+q}+\ep X^{Q+1}$ for some $\ep \in \F_{q^2}^* \setminus \mu_{q+1}$.
    \end{enumerate}
\end{theorem}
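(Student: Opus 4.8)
The plan is to combine the classification of the accompanying rational function $g(X)$ from Lemmas \ref{lem5} and \ref{lem6} with the translation mechanism of Lemma \ref{lem7}. By Lemma \ref{lem11}, if $g(X)$ is non-constant and $A(X)$ has no roots in $\mu_{q+1}$, then exactly one of Case 3 a) or Case 3 b) holds. Case 3 a) is covered by Lemma \ref{lem5} (three subcases) and Case 3 b) by Lemma \ref{lem6} (four subcases), so there are seven families of $g(X)$ to dispatch. In each case $g(X)$ has degree $Q+1$ when it comes from a monomial-type description: for Lemma \ref{lem5} this needs checking, since $X^{Q-1}$ in case 3) has degree $Q-1$, not $Q+1$ — but there $\gcd(A(X),B(X))$ is a degree-two factor, so $\deg g = Q-1$ while $\deg A = Q+1$, and the argument of Lemma \ref{lem7} still applies after accounting for the common factor. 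So first I would verify in each of the seven cases that $\deg g = Q+1$ or handle the $\deg g = Q-1$ subcase by noting $C(X)=(\bar\alpha X-1)(X-\alpha)$ and absorbing it.

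The core of the proof is then a case-by-case application of Lemma \ref{lem7}. For each family of $g(X)$, I would write the given rational-function model (e.g. $\frac{X^{Q+1}}{X+1}$, $\frac{X^Q}{X^{Q+1}+\ep}$, $\frac{X^Q(X-1)}{X+\ep}$, $\frac{X^Q(\bar\ep X+1)}{X+\ep}$, or the monomials $X^{Q+1}$, $X^{Q-1}$) in the form $\frac{A_{\c_1}(X)}{A_{\c_0}(X)}$ with $\c_0,\c_1 \in \F_q^4$ (when $\rho,\sigma$ map $\mu_{q+1}$ to $\P^1(\F_q)$) or identify it as $g_{\c_0}(X)$ for some $\c_0 \in \F_{q^2}^4$ (when $\rho,\sigma$ permute $\mu_{q+1}$). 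Then Lemma \ref{lem7} part 1) gives $f(X)$ linear equivalent to $P(x,y) = (y^{Q+1}A_{\c_1}(x/y), y^{Q+1}A_{\c_0}(x/y))$, which after clearing denominators is a $(Q,Q)$-biprojective function; a final linear change of coordinates on $\F_q^2$ normalizes it to one of $P_0, P_1, P_2, P_3$. Lemma \ref{lem7} part 2) gives $f(X)$ linear equivalent to $f_{\c_0}(X)$ directly, which unwinds to one of the univariate forms $X^{Q+1}$, $X^{Q+q}$, or $X^{Q+q}+\ep X^{Q+1}$. The matching of the seven $g$-families to the seven $f$-representatives is: Lemma \ref{lem5} cases 1)/2)/3) give $P_0$/$f_0=X^{Q+1}$/$f_1=X^{Q+q}$ (the last two being monomials since $X^{Q-1}$ pulls back to $X^{Q+q}$ via $X \mapsto X^{q-1}$), and Lemma \ref{lem6} cases 1)/2)/3)/4) give $P_1$/$P_2$/$P_3$/$f_2$.

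The main obstacle I anticipate is the bookkeeping in converting each rational model $\frac{N(X)}{D(X)}$ into the normalized $\frac{A_{\c_1}(X)}{A_{\c_0}(X)}$ form with $\deg \le Q+1$ and $D(0) \ne 0$, and then reading off the correct biprojective representative. For instance, $\frac{X^Q}{X^{Q+1}+\ep}$ has numerator of degree $Q$ (not $Q+1$), so one must write it as $\frac{0 \cdot X^{Q+1} + 1 \cdot X^Q + 0 \cdot X + 0}{1 \cdot X^{Q+1} + 0 \cdot X^Q + 0 \cdot X + \ep}$, i.e. $\c_1 = (0,1,0,0)$, $\c_0 = (1,0,0,\ep)$; then $P(x,y) = (x^Q y, x^{Q+1}+\ep y^{Q+1})$, which is exactly $P_2$. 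One must also double-check that the degree-one maps $\rho, \sigma$ produced in the proofs of Lemmas \ref{lem5} and \ref{lem6} really are defined over $\F_{q^2}$ and satisfy the hypotheses of Lemma \ref{lem7} (mapping $\mu_{q+1}$ onto $\P^1(\F_q)$, or permuting $\mu_{q+1}$, as appropriate) — this is where a subtle error could creep in, especially in Lemma \ref{lem5} case 3) where the $\deg g = Q-1$ phenomenon forces one to track the common factor $C(X)$ through the computation. Once these normalizations are pinned down, the remaining verification that each resulting biprojective or univariate polynomial is linear equivalent to the listed representative is a routine linear-algebra computation over $\F_q$ (or $\F_{q^2}$), which I would state but not belabor.
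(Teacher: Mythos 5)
Your proposal is correct and follows essentially the same route as the paper: dispatch the seven families of $g(X)$ from Lemmas \ref{lem5} and \ref{lem6} through Lemma \ref{lem7}, with the same matching of families to representatives (including the normalization $\c_1=(0,1,0,0)$, $\c_0=(1,0,0,\ep)$ yielding $P_2$) and the same recognition that the $\deg g=Q-1$ case of Lemma \ref{lem5} 3) must be handled by redoing the computation of Lemma \ref{lem7} while tracking the common factor $C(X)$, which is exactly what the paper does to obtain $f_1=X^{Q+q}$.
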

\begin{proof}
By 3) of Lemma \ref{lem11}, we know that $g(X)$ must be in one of the families in Lemmas \ref{lem5} or \ref{lem6}.

We first look at the cases when $g(X)$ is in Family 1) of Lemma \ref{lem5} or Families 1) to 3) of Lemma \ref{lem6}. In all these four families $\deg g=Q+1$ and $g(X)$ is of the form $\rho^{-1} \circ \frac{A_{\c_1}(X)}{A_{\c_0}(X)} \circ \sigma$ for some $\c_0, \c_1 \in \F_q^4$ and degree-one $\rho, \sigma \in \F_{q^2}(X)$ both of which map $\mu_{q+1}$ to $\P^1(\F_q)$. Hence 1) of Lemma \ref{lem7} applies, and $f(X)$ is linear equivalent to functions listed in 1), 4), 5) and 6) of Theorem \ref{AE2} respectively.

    Now consider the case when $g(X)$ is in Family 2) of Lemma \ref{lem5} or Family 4) of Lemma \ref{lem6}. In both families $\deg g=Q+1$ and $g(X)$ is of the form $\rho^{-1} \circ g_{\c_0}(X) \circ \sigma$ for some degree-one $\rho, \sigma \in \F_{q^2}(X)$ both of which permute $\mu_{q+1}$, and $\c_0=(0,0,0,1)$ and $(0,0,1,\ep)$ respectively. Hence 2) of Lemma \ref{lem7} applies, and $f(X)$ is linear equivalent to functions listed in  2) and 7) of Theorem \ref{AE2} respectively.

    It remains to investigate the case when $g(X)$ is in Family 3) of Lemma \ref{lem5}. Here $\deg g=Q-1$, so we cannot apply Lemma \ref{lem7} directly. Instead, we can find the linear equivalence of $f(X)$ directly, as in the proofs of Lemma \ref{lem7}. Writing $\sigma, \rho$ explicitly as
        $$\sigma(X)=\frac{\bar{\alpha}_2X+\bar{\alpha}_1}{\alpha_1X+\alpha_2}, \quad \rho(X)=\frac{\bar{\beta}_2X+\bar{\beta}_1}{\beta_1X+\beta_2}, $$
        where $\alpha_i,\beta_i \in \F_{q^2}$ for $i=1,2$ such that $\alpha_1\bar{\alpha}_1 \neq \alpha_2\bar{\alpha}_2, \beta_1\bar{\beta}_1 \neq \beta_2\bar{\beta}_2$, we have
        \begin{align*}
            g(X)&=\rho^{-1} \circ X^{Q-1} \circ \sigma=-\frac{\beta_2X-\bar{\beta}_1}{\beta_1X-\bar{\beta}_2} \circ X^{Q-1} \circ \frac{\bar{\alpha}_2X+\bar{\alpha}_1}{\alpha_1X+\alpha_2}\\
            &=-\frac{\beta_2(\bar{\alpha}_2X+\bar{\alpha}_1)^{Q-1}-\bar{\beta}_1(\alpha_1X+\alpha_2)^{Q-1}}{\beta_1(\bar{\alpha}_2X+\bar{\alpha}_1)^{Q-1}-\bar{\beta}_2(\alpha_1X+\alpha_2)^{Q-1}}.
            \end{align*}
        This can be further written as
        \begin{align} \label{4:g2x}
            g(X) &=\frac{X^{Q+1}D^{(q)}(1/X)}{D(X)},
            \end{align}
           where $$D(X):=(\alpha_1X+\alpha_2)(\bar{\alpha}_2X+\bar{\alpha}_1)\left[\beta_1(\bar{\alpha}_2X+\bar{\alpha}_1)^{Q-1}-\bar{\beta}_2(\alpha_1X+\alpha_2)^{Q-1}\right].$$
            Again by using $g(X)=\frac{B(X)}{A(X)}$ and $\deg g=Q-1$, we conclude that $A(X)=\gamma D(X)$ for some $\gamma \in \F_q^*$.
            Now from $f(X)=X^{Q+1}A \left(\bar{X}/X\right)$, we have
            \begin{align*}
            f(X)&=\gamma(\alpha_1\bar{X}+\alpha_2X)(\bar{\alpha}_2\bar{X}+\bar{\alpha}_1X)\left[\beta_1(\bar{\alpha}_2\bar{X}+\bar{\alpha}_1X)^{Q-1}-\bar{\beta}_2(\alpha_1\bar{X}+\alpha_2X)^{Q-1}\right]\\
            &=\gamma[\beta_1(\alpha_1\bar{X}+\alpha_2X)(\bar{\alpha}_2\bar{X}+\bar{\alpha}_1X)^Q-\bar{\beta}_2(\alpha_1\bar{X}+\alpha_2X)^Q(\bar{\alpha}_2\bar{X}+\bar{\alpha}_1X)]\\
            &=L_1 \circ X^Q\bar{X} \circ L_2(X),
            \end{align*}
             where
            $$L_1(X)=\gamma\left(\beta_1\bar{X}-\bar{\beta}_2X\right)$$
            and
            $$L_2(X)=\alpha_1\bar{X}+\alpha_2X.$$
            Since $\alpha_1\bar{\alpha}_1 \neq \alpha_2\bar{\alpha}_2$ and $$(\gamma\beta_1)(\overline{\gamma\beta_1})=\gamma^2\beta_1\bar{\beta}_1\neq \gamma^2\beta_2\bar{\beta}_2=(-\gamma\bar{\beta}_2)(\overline{-\gamma\bar{\beta}_2}),$$
            the maps $L_1,L_2: \F_{q^2} \to \F_{q^2}$ are linear permutations, and hence $f(X)$ is linear equivalent to $X^Q\bar{X}$, that is, the function in 3) of Theorem \ref{AE2}. Now the proof of Theorem \ref{AE2} is complete.

\end{proof}
We remark here that all results in this section work for $p=2$ as well. This classification also provides an alternative proof to the main result of \cite{Faruk2} that all permutation quadrinomials $f_{\c}(X)$ given in (\ref{DO}) are linear equivalent to Gold or ``doubly-Gold'' functions.


\section{Proof of Theorem \ref{Main}}\label{ProMain}
Let us first define two-to-one functions.

\begin{definition}
    Given finite sets $R$ and $S$, a map $F:R \to S$ is said to be \textbf{two-to-one} (or 2-to-1 for short) if one of the following conditions holds:
\begin{enumerate}
    \item if $\sharp R$ is even, then for any $s \in S$, $\sharp F^{-1}(s) \in \{0,2\}$;
    \item if $\sharp R$ is odd, then there is a unique $s_0 \in S$ such that $\sharp F^{-1}(s_0)=1$, and for any $s \in S \setminus \{s_0\}$, $\sharp F^{-1}(s) \in \{0,2\}$.
\end{enumerate}
\end{definition}
The following is a result relating two-to-one and planarity for certain kind of polynomials. It is extracted from \cite[Theorem 1.1]{Chen}.
\begin{theorem} \label{6:DO21}  Let $\F_q$ be a finite field of odd order $q$ and $F(x)$ be a DO polynomial over $\F_q$. The following statements are equivalent:
\begin{itemize}
\item[(i)] $F(x)$ is planar;
\item[(ii)] $F(x)$ is a two-to-one map with $f^{-1}(0)=\{0\}$.
    \end{itemize}
    \end{theorem}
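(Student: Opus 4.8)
The plan is to reduce both implications to a single clean planarity criterion valid for any Dembowski--Ostrom polynomial in odd characteristic. First I would record two elementary facts about a DO polynomial $F$ over $\F_q$ with $q$ odd: every monomial $X^{p^i+p^j}$ has even exponent, so $F(0)=0$ and $F(-x)=F(x)$ for all $x$; and the polarization $B(x,a):=F(x+a)-F(x)-F(a)$ is a symmetric $\F_p$-bilinear form, with $B(a,a)=2F(a)$ (a term-by-term check on $X^{p^i+p^j}$: the cross term is $x^{p^i}a^{p^j}+a^{p^i}x^{p^j}$, whose value at $x=a$ is $2a^{p^i+p^j}$). Since $p$ is odd, $2$ is invertible, so $D_aF(x)=F(x+a)-F(x)=B(x,a)+F(a)$ is an affine map in $x$ with linear part $B(\cdot,a)$; hence $D_aF$ is a permutation iff the $\F_p$-linear map $B(\cdot,a)$ has trivial kernel. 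Expanding $F(x+a)-F(x-a)$ gives $2B(x,a)$, so $B(x,a)=0 \iff F(x+a)=F(x-a)$. Combining these, the intermediate goal is the equivalence
$$F \text{ planar} \iff \text{for every } a\ne 0,\ \bigl(F(x+a)=F(x-a)\ \Rightarrow\ x=0\bigr).$$

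With this criterion in hand the two implications are short. For $(i)\Rightarrow(ii)$ I would take any coincidence $F(s)=F(t)$ with $s\ne t$, set $x=\tfrac{s+t}{2}$ and $a=\tfrac{t-s}{2}\ne 0$, and apply the criterion to get $x=0$, i.e. $t=-s$; combined with $F(-s)=F(s)$ this shows that the fibres of $F$ are exactly $\{0\}$ and the two-element sets $\{s,-s\}$ with $s\ne 0$, and that $F^{-1}(0)=\{0\}$ (since $F(s)=F(0)=0$ with $s\ne 0$ would place $s$ in the fibre of $0$, forcing $s=-0=0$). Because $q$ is odd, this is precisely the definition of a two-to-one map with $F^{-1}(0)=\{0\}$. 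For $(ii)\Rightarrow(i)$ I would first upgrade the hypotheses to the statement $F(z)=F(w)\iff w=\pm z$: the direction $\Leftarrow$ is $F(-z)=F(z)$; for $\Rightarrow$, if $F(z)=F(w)=0$ then $z=w=0$ by $F^{-1}(0)=\{0\}$, while if $F(z)=F(w)=c\ne 0$ then $z\ne 0$, the two-to-one property forces $\#F^{-1}(c)=2$, and since $z,-z\in F^{-1}(c)$ are distinct we get $F^{-1}(c)=\{z,-z\}\ni w$. Then I would verify the criterion directly: if $a\ne 0$ and $F(x+a)=F(x-a)$, then $x+a=\pm(x-a)$; the $+$ case gives $a=0$, which is impossible, so $x+a=-(x-a)$, i.e. $x=0$.

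The only place that needs genuine care—the "main obstacle", such as it is—is setting up the planarity criterion cleanly: checking that $B$ is biadditive and that $B(a,a)=2F(a)$, and tracking that several steps (dividing by $2$, forming $\tfrac{s+t}{2}$, concluding $2x=0\Rightarrow x=0$, and distinguishing $s$ from $-s$) all rely on $q$ being odd. Everything after the criterion is formal bookkeeping. One could instead simply cite \cite[Theorem 1.1]{Chen}, but the self-contained argument above is short and does not require importing the rest of that paper's machinery.
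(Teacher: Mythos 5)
Your proof is correct. Note, however, that the paper itself does not prove this statement at all: it simply imports it as \cite[Theorem 1.1]{Chen} (Chen--Polhill), so there is no ``paper proof'' to match against. What you supply is the standard self-contained polarization argument: the identities $F(-x)=F(x)$, $B(x,a):=F(x+a)-F(x)-F(a)$ symmetric and $\F_p$-bilinear with $B(a,a)=2F(a)$, and $F(x+a)-F(x-a)=2B(x,a)$ all check out, and the resulting criterion (planar $\iff$ for every $a\ne 0$, $F(x+a)=F(x-a)$ forces $x=0$) cleanly yields both implications. Your handling of the two-to-one bookkeeping is also consistent with the paper's definition for $\#R$ odd: in $(i)\Rightarrow(ii)$ you correctly identify $s_0=0$ as the unique value with a singleton fibre, and in $(ii)\Rightarrow(i)$ you correctly use $F^{-1}(0)=\{0\}$ to pin down $s_0=0$ before invoking $\#F^{-1}(c)\in\{0,2\}$ for $c\ne 0$. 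The one thing your write-up buys over the paper's citation is self-containment at essentially no cost; the one thing the citation buys is brevity. Either is acceptable, and every step of yours that uses odd characteristic (invertibility of $2$, distinctness of $s$ and $-s$ for $s\ne 0$) is flagged where it is used.
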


Since $f_{\c}(X)$ is a DO polynonmial, by using Theorem \ref{6:DO21}, we can prove the following.

\begin{lemma}\label{2211}
    If $f_{\c}(X)$ is planar, then $g(X)$ is non-constant and $A(X)$ has no roots in $\mu_{q+1}$.
\end{lemma}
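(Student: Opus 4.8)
The plan is to prove the contrapositive in two parts: show that if $g(X)$ is constant then $f_{\c}(X)$ is not planar, and show that if $A(X)$ has a root in $\mu_{q+1}$ then $f_{\c}(X)$ is not planar. In both cases the strategy is to exhibit a nonzero element of $\F_{q^2}$ that is a root of $f_{\c}(X)$, so that $f_{\c}^{-1}(0) \ne \{0\}$; by Theorem \ref{6:DO21} this rules out planarity (since $f_{\c}$ is a DO polynomial). The key elementary fact tying things together is the factorisation $f_{\c}(X) = X^{Q+1}A\bigl(X^{q-1}\bigr)$, so that a nonzero $X_0 \in \F_{q^2}$ is a root of $f_{\c}$ precisely when $X_0^{q-1}$ is a root of $A(X)$. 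Since $X \mapsto X^{q-1}$ maps $\F_{q^2}^*$ onto $\mu_{q+1}$, this is equivalent to $A(X)$ having a root in $\mu_{q+1}$.

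First I would dispose of the constant case. Suppose $g(X) = B(X)/A(X)$ is constant, say $g \equiv \lambda$. If $\lambda = 0$ then $B(X) \equiv 0$, which by (\ref{3:bcx}) forces $c_0=c_1=c_2=c_3=0$, contradicting $\c \ne \underline{0}$; so $\lambda \ne 0$ and $B(X) = \lambda A(X)$ as rational functions, hence $A, B$ differ by the constant $\lambda$ up to their common factor, and in any case $A(X)$ and $B(X)$ have the same roots in $\mu_{q+1}$ with the same multiplicities. Comparing coefficients in $B(X) = \lambda A(X)$ (or rather using the relation between the roots of $B$ and the $(-q)$-th powers of roots of $A$, exactly as in the proof of Lemma \ref{lem11}), one sees that the multiset of roots of $A$ in $\overline{\F}_q^*$ is preserved by $\alpha \mapsto \alpha^{-q}$, i.e. $A(X)$ (away from a possible factor of $X$) is SCR; but a degree $\le Q+1$ SCR polynomial that is not a power of $X$ must have a root in $\mu_{q+1}$ — more simply, since $g$ is constant, $A(X)$ cannot be a nonzero constant (else $f_{\c}(X)$ is a monomial $c_3 X^{Q+1}$, which is planar only in degenerate situations one checks directly), so $A$ has a nonzero root $\alpha$, and then $\alpha^{-q}$ is a root of $B = \lambda A$, hence of $A$; iterating and using that the root set is finite shows some root lies in $\mu_{q+1}$. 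Thus the constant case reduces to the second case anyway.

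The main case is therefore: $A(X)$ has a root $\zeta \in \mu_{q+1}$; I want to produce $X_0 \in \F_{q^2}^*$ with $X_0^{q-1} = \zeta$. This is exactly the statement that the norm-one element $\zeta$ lies in the image of the surjective homomorphism $\F_{q^2}^* \to \mu_{q+1}$, $X \mapsto X^{q-1}$, whose image is all of $\mu_{q+1}$ since $\#\F_{q^2}^*/\#\mu_{q+1} = (q^2-1)/(q+1) = q-1 = \#\ker$. Picking any such $X_0$, we get $f_{\c}(X_0) = X_0^{Q+1} A(\zeta) = 0$ with $X_0 \ne 0$, so $f_{\c}^{-1}(0) \supsetneq \{0\}$ and $f_{\c}$ is not planar by Theorem \ref{6:DO21}. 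Conversely, and to close the loop for the converse direction of the lemma statement's logic (which is what we actually need), if $f_{\c}$ is planar then no such $X_0$ exists, hence $A$ has no root in $\mu_{q+1}$, and by part 1) of Lemma \ref{lem11} combined with the above $g$ is non-constant.

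I expect no serious obstacle here; the only point requiring a little care is the constant-$g$ case, where one must argue that "constant $g$" together with $\c \ne \underline 0$ cannot be escaped by $f_{\c}$ being a bare monomial — but $c_3 X^{Q+1}$ corresponds to $c_0=c_1=c_2=0$, and then $g(X) = \bar c_3/c_3 \in \mu_{q+1}$ is indeed constant while $f_{\c}(X) = c_3 X^{Q+1}$ has $f_{\c}^{-1}(0) = \{0\}$; so this genuine subcase must be handled by noting that $X^{Q+1}$ is a Dembowski--Ostrom monomial whose planarity is the classical condition, and it is planar only when $\frac{\l}{\gcd(k,\l)}$ is even, which does not conflict with the lemma since in that case $g$ being constant is allowed — wait, this shows the lemma as literally stated can fail for monomials. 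The resolution is that when $c_0 = c_1 = c_2 = 0$ we have $A(X) = c_3$, a nonzero constant with no roots at all, so in particular no root in $\mu_{q+1}$, and $g \equiv \bar c_3/c_3$; thus the lemma's hypothesis forces us into case 1) of Lemma \ref{lem11}, i.e. $U(X) = V(X) = 0$, and one must verify directly from (\ref{3:nota}) that $U = V = 0$ together with planarity of $f_{\c}$ is impossible — here $U = V = 0$ gives $e_2 = e_3 = \te_2 = \te_3 = 0$, and chasing these four equations (exactly as in the proof of part 1) of Lemma \ref{lem11}, where one deduces $g$ is constant hence $\bar c_i = \lambda c_{\sigma(i)}$ for a fixed $\lambda$ and the order-reversing permutation $\sigma$) one finds $A$ is a scalar multiple of an SCR polynomial up to a power of $X$, and then either $A$ has a root in $\mu_{q+1}$ (contradiction) or $A = c_3$ is constant, in which case $f_{\c} = c_3 X^{Q+1}$ — and this last case is precisely excluded by a separate direct check, which I would carry out by noting that $X^{Q+1}$ is $2$-to-$1$ with the right kernel iff $\gcd(Q+1, q^2-1)$ divides things appropriately, deferring the precise bookkeeping to the monomial analysis done elsewhere in the paper. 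The honest main obstacle, then, is simply being careful that the constant-$g$ branch is truly incompatible with planarity, and the cleanest route is to invoke part 1) of Lemma \ref{lem11} to replace "$g$ constant" by the explicit vanishing $U = V = 0$ and then show this, combined with Theorem \ref{6:DO21}, forces a nonzero root of $f_{\c}$.
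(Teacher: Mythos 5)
Your treatment of the case where $A(X)$ has a root $\zeta \in \mu_{q+1}$ is correct: producing $X_0 \in \F_{q^2}^*$ with $X_0^{q-1}=\zeta$ gives $f_{\c}(X_0)=0$, and Theorem \ref{6:DO21} then rules out planarity. (The paper instead exhibits the $q-1$ solutions $x=tx_0$, $t\in\F_q^*$, of $f_{\c}(x+x_0)-f_{\c}(x)=0$ directly from the definition of planarity, but your route through the two-to-one criterion is equally valid.)

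The constant-$g$ case, however, has a genuine gap. Your entire strategy there is to reduce it to the root case by arguing that $B(X)=\lambda A(X)$ forces $A(X)$ to have a root in $\mu_{q+1}$, and every version of that argument you give fails. An SCR polynomial need not have a root in $\mu_{q+1}$: its root multiset is merely stable under $\alpha\mapsto\alpha^{-q}$, and the orbits can be $2$-cycles $\{\alpha,\alpha^{-q}\}$ lying entirely outside $\mu_{q+1}$, so the ``iterate and use finiteness'' step produces no fixed point. Concretely, take $\c=(1,0,0,1)$ with $q=Q=3$: then $A(X)=B(X)=X^4+1$, so $g\equiv 1$ is constant, yet $A(\alpha)=2\neq 0$ for every $\alpha\in\mu_4$, so $A$ has no root in $\mu_{q+1}$ and your reduction never gets started. (Your later patch rests on a miscomputation: if $c_0=c_1=c_2=0$ and $c_3\neq 0$ then $A(X)=c_3$ but $B(X)=\bar c_3X^{Q+1}$, so $g(X)=(\bar c_3/c_3)X^{Q+1}$ is \emph{not} constant; the monomial $c_3X^{Q+1}$ never falls into the constant-$g$ branch, which is why the lemma is consistent with its planarity. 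The final dichotomy ``either $A$ has a root in $\mu_{q+1}$ or $A$ is constant'' is false for the same reason as above.) The constant case needs a separate argument, which is what the paper supplies: assuming $A$ has no root in $\mu_{q+1}$ (else the first case applies), for every $x\in\F_{q^2}^*$ with $x^{q-1}=\alpha$ one computes $f_{\c}(x)^{q-1}=\alpha^{Q+1}A(\alpha)^{q-1}=B(\alpha)/A(\alpha)=\lambda$, so $f_{\c}$ takes at most $q-1$ distinct values on the $q^2-1$ elements of $\F_{q^2}^*$; some fibre then has size at least $q+1\ge 4$, so $f_{\c}$ is not two-to-one and Theorem \ref{6:DO21} applies. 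You would need to add an argument of this kind to close the gap.
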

\begin{proof}
Suppose $A(\alpha)=0$ for some $\alpha \in \mu_{q+1}$. Take an $x_0 \in \F_{q^2}^*$ such that $x_0^{q-1}=\alpha$. Then for any $t \in \F_q^*$ we have
\[f_{\c}(tx_0)=(tx_0)^{Q+1}A\left((tx_0)^{q-1}\right)=(tx_0)^{Q+1}A\left(\alpha\right)=0,\]
thus the equation
\[f_{\c}(x+x_0)-f_{\c}(x)=0\]
has solutions $x=tx_0$ for any $t \in \F_q^*$, that is, $f_{\c}(X)$ is not planar.

Next suppose $g(X)=\frac{B(X)}{A(X)}=\lambda$ is a constant. For any $x \in \F_{q^2}^*$, suppose $x^{q-1}=\alpha$, we have
\[f(x)^{q-1}=x^{(q-1)(Q+1)}A \left(x^{q-1}\right)^{q-1}=\alpha^{Q+1}A(\alpha)^{q-1}. \]
Since $\alpha \in \mu_{q+1}$, the right hand side can be written as
\[\alpha^{Q+1}A(\alpha)^{q-1}=\alpha^{Q+1} \frac{A(\alpha)^q}{A(\alpha)}=\frac{B(\alpha)}{A(\alpha)}=\lambda.\]
Since $x$ is arbitrary, we obtain
\[f(x)^{q-1}=\lambda, \qquad \forall x \in \F_{q^2}^*. \]
This implies that $f(x)$ takes at most $q-1$ distinct values as $x$ varies in the set $\F_{q^2}^*$ of cardinality $q^2-1$, so there is at least one value $y \in \F_{q^2}^*$ with $y^{q-1}=\lambda$ such that $\sharp f_{\c}^{-1}(y) \ge \frac{q^2-1}{q-1}=q+1 \ge 4$, so $f_{\c}(X)$ is not 2-to-1 over $\F_{q^2}^*$. Since $f_{\c}(X)$ is a DO polynomial, by Theorem \ref{6:DO21}, $f_{\c}(X)$ is not planar. This completes the proof of Lemma \ref{2211}.
\end{proof}

If $f_{\c}(X)$ is planar, Lemma \ref{2211} ensures that $g(X)$ is nonconstant and $A(X)$ has no roots in $\mu_{q+1}$, then by Theorem \ref{AE2}, we know that $f_{\c}(X)$ must be in one of the linear equivalence classes 1)--7). Note that 2-to-1 and planar properties are preserved under linear equivalence. Therefore it suffices to check the seven functions listed in Theorem \ref{AE2} to see whether or not they are planar or 2-to-1 (the number labels correspond to the family numbers in Theorem \ref{AE2}). In this section we focus on Cases 1)-5), which are relatively straightforward. We leave the more difficult Cases 6) and 7) to {\bf Appendix}.

\subsection{Cases 1)-5)}

{\bf Case 1).} $P_0(x,y)=\left(x^{Q+1},y^{Q+1}\right)$: since $Q+1$ is even, $P_0(\pm 1,\pm 1)=(1,1)$, that is, $\sharp P_0^{-1}((1,1)) \ge 4$, so $P_0$ is not 2-to-1 and hence not planar.

{\bf Case 2).} $f_0(X)=X^{Q+1}$: $f_0$ is 2-to-1 on $\F_{q^2}$ if and only if $\gcd(Q+1,q^2-1)=2$, which holds if and only if $\frac{2k}{\gcd(2k,\l)}$ is odd by Lemma \ref{2:gcd}, since $q=p^k,Q=p^{\l}$ and $p$ is odd. With a little further computation, this condition is then equivalent to $\frac{\l}{\gcd(k,\l)}$ being even.

{\bf Case 3).} $f_1(X)=X^{Q+q}$: $f_1$ is 2-to-1 one $\F_{q^2}$ if and only if $\gcd(Q+q,q^2-1)=2$, which holds if and only if $\gcd(Qq+1,q^2-1)=2$ if and only if $\frac{2k}{\gcd(k+\l,2k)}$ is odd by Lemma \ref{2:gcd}, since $q=p^k,Q=p^{\l}$ and $p$ is odd. With a little further computation, this condition is then equivalent to $\frac{k\l}{\gcd(k,\l)^2}$ being odd.

{\bf Case 4).} $P_1(x,y)=\left(x^{Q+1},xy^Q+y^{Q+1}\right)$: it is easy to check that $P(\pm 1,0)=P(1,-1)=P(-1,1)=(1,0)$, so $P_1$ is not 2-to-1 and hence not planar.

{\bf Case 5).} In this part we prove that $P_2(x,y)=(x^Qy,x^{Q+1}+\ep y^{Q+1})$ is planar on $\F_q^2$ if and only if
\begin{eqnarray} \label{57:cond}
\frac{k}{\gcd (k,\l)} \mbox{ is odd and } \ep \in \F_{q}^* \mbox{ is a non-square.}
\end{eqnarray}

Suppose $P_2(x,y)$ is planar or 2-to-1 on $\F_q^2$. Since $x^{Q+1}=1$ has $d=\gcd (Q+1,q-1) \ge 2$ solutions $x \in \F_q^*$, say $x_1, \cdots,x_d$, and obviously $P_2(x_i,0)=(0,1)$ for any $i$, so $d=2$, that is, $\frac{k}{\gcd (k,\l)}$ is odd. Moreover, if $\ep \in \F_{q}^*$ is a square, then the equation $\ep y^{Q+1}=1$ has two solutions $y_1,y_2 \in \F_q^*$, and we have $P_2(\pm 1,0)=P_2(0,y_1)=P_2(0,y_2)=(0,1)$, so $P_2(x,y)$ cannot be 2-to-1 on $\F_q^2$. So if $P_2(x,y)$ is 2-to-1 on $\F_q^2$ then conditions (\ref{57:cond}) hold.

From now on let us assume conditions (\ref{57:cond}) hold. To prove that $P_2(x,y)$ is planar, we need to show that that, for any $(a,b) \in \F_q^2 \setminus \{(0,0)\}$ and any $(c,d) \in \F_q^2$, the equations $$P_2(x+a,y+b)-P_2(x,y)=(c,d),$$ or equivalently,
\begin{eqnarray} \label{57:eq1} \left\{\begin{array}{lll}
(x+a)^{Q}(y+b)-x^Qy&=&c,\\
(x+a)^{Q+1}+\ep(y+b)^{Q+1}-x^{Q+1}-\ep y^{Q+1}&=&d,
\end{array}
\right.\end{eqnarray}
always have a unique solution $(x,y) \in \F_q^2$ under conditions (\ref{57:cond}).

Eq (\ref{57:eq1}) can be simplified as
\begin{eqnarray*} \left\{\begin{array}{lll}
x^Qb+y a^Q&=&c-a^Qb,\\
x^Qa+x a^Q+\ep(y^Qb+yb^Q)&=&d-a^{Q+1}-\ep b^{Q+1}.
\end{array}
\right.\end{eqnarray*}
Since the left hand sides are $\F_p$-linear in variables $x$ and $y$, it suffices to prove that under conditions (\ref{57:cond}), for any $(a,b) \in \F_q^2 \setminus \{(0,0)\}$, the equations below always have a unique solution $(x,y) \in \F_q^2$ which is $(x,y)=(0,0)$:
\begin{eqnarray} \label{57:eq3} \left\{\begin{array}{lll}
x^Qb+y a^Q&=&0,\\
x^Qa+x a^Q+\ep(y^Qb+yb^Q)&=&0.
\end{array}
\right.\end{eqnarray}
{\bf Case i} If $a \ne 0, b=0$, it is easy to see that the first equation of (\ref{57:eq3}) implies that $y=0$. As for the second equation of (\ref{57:eq3}), making a change of variable $x \mapsto ax$, we have $a^{Q+1}(x^Q+x)=0$. Obviously $x=0$ is a solution. If $x \ne 0$, then $x^{Q-1}=-1$. However, denote $\delta=\gcd(\l,k)$, since $k/\delta$ is odd by conditions (\ref{57:cond}), we have
\begin{eqnarray*}
(-1)^{\frac{q-1}{\gcd(Q-1,q-1)}}=(-1)^{\frac{p^k-1}{\gcd(p^{\l}-1,p^k-1)}}=(-1)^{\frac{p^k-1}{p^{\delta}-1}}=-1,
\end{eqnarray*}
that is, $x^{Q-1}=-1$ is not solvable in $\F_q$. In the second equality above we apply Lemma \ref{2:gcd}. This also shows that $x^Q+x$ is a permutation on $\F_q$, that is, Eq (\ref{57:eq3}) have a unique solution in this case under conditions (\ref{57:cond}).

{\bf Case ii} If $a=0, b \ne 0$, this case is very similar to {\bf Case i}, showing Eq (\ref{57:eq3}) have a unique solution. We omit the details.

{\bf Case iii} If $ab \ne 0$, making a change of variables $x \mapsto ax, y \mapsto by$, we can simplify Eq (\ref{57:eq3}) as
\begin{eqnarray} \label{57:eq4} \left\{\begin{array}{lll}
a^Qb(x^Q+y)&=&0,\\
a^{Q+1}(x^Q+x)+\ep b^{Q+1}(y^Q+y)&=&0.
\end{array}
\right.\end{eqnarray}
The first equation of Eq (\ref{57:eq4}) implies that $y=-x^Q$. Plugging this into the second equation of (\ref{57:eq4}), and taking $z=x^Q+x$, we obtain
\begin{eqnarray} \label{57:eq5} \left\{\begin{array}{lll}
x^Q+x&=&z,\\
a^{Q+1}z-\ep b^{Q+1}z^Q&=&0.
\end{array}
\right.\end{eqnarray}
Since $k/\delta$ is odd, $x^Q+x$ is a permutation on $\F_q$, so the number of solutions $(x,y) \in \F_q^2$ of Eq (\ref{57:eq4}) is the same as the number of solutions $z \in \F_q$ satisfying the second equation of (\ref{57:eq5}), which we can write as
\begin{eqnarray*} \label{57:eq6}
z^Q=\left(\frac{a}{b}\right)^{Q+1} \frac{z}{\ep}.
\end{eqnarray*}
$z=0$ is obviously a solution. If $z \ne 0$, then
\begin{eqnarray} \label{57:eq7} z^{Q-1}=\left(\frac{a}{b}\right)^{Q+1} \frac{1}{\ep}.
\end{eqnarray}
Eq (\ref{57:eq7}) is not solvable for $z \in \F_q^*$ as the left hand side is a square in $\F_q^*$, but the right hand side is a nonsquare as $\ep \in \F_q^*$ is a nonsquare. This implies that the original Eq (\ref{57:eq3}) has only the unique solution $(x,y)=(0,0)$ in $\F_q^2$. This proves that $P_2(x,y)$ is planar under conditions (\ref{57:cond}).

{\bf Cases 6) and 7).} These cases are more technical. We will prove in {\bf Appendix} that functions from these two cases are never planar when $k \nmid \l$. This concludes the first part of Theorem \ref{Main}.

\subsection{Case $k \mid \l$}

Now we assume $k \mid \l$. This implies that $Q=p^\l=q^{\l/k}$. Then $X^Q$ induces the same function on $\F_{q^2}$ as $\bar{X}$ or $X$ according to whether $\frac{\l}{k}$ is odd or even. Therefore $f_{\c}(X)$ induces the same function on $\F_{q^2}$ as
\begin{equation}\label{tf}
F_{\c}(X):=\begin{cases}
    c_2\bar{X}^2+(c_0+c_3)X\bar{X}+c_1X^2 &(\frac{\l}{k} \text{ is odd})\\
    c_0\bar{X}^2+(c_1+c_2)X\bar{X}+c_3X^2 &(\frac{\l}{k} \text{ is even}).
\end{cases}
\end{equation}
This leads us to study a simpler form of polynomials, namely $$\tilde{f}(X)=\tilde{f}_{\a}(X)=a_0\bar{X}^2+a_1X\bar{X}+a_2X^2 \in \F_{q^2}[X]$$
for $\a=(a_0,a_1,a_2) \in \F_{q^2}^3$. We see that $\tilde{f}(X)=X^2\tilde{A}(X^{q-1})$ where $$\tilde{A}(X)=\tilde{A}_{\a}(X)=a_0X^2+a_1X+a_2.$$
Let us introduce some notations. For $\a=(a_0,a_1,a_2) \in \F_{q^2}^3$, define
\begin{align}
    \tilde{B}(X)&:=\bar{a}_2X^2+\bar{a}_1X+\bar{a}_0=X^2\tilde{A}^{(q)}(1/X), \nonumber \\
    \tilde{G}(X)&:=\frac{\tilde{B}(X)}{\tilde{A}(X)},\nonumber \\
    \tilde{e}&:=a_2\bar{a}_2-a_0\bar{a}_0,\label{7:e} \\
    \tilde{\te}&:=\bar{a}_1a_2-\bar{a}_0a_1. \label{7:te}
\end{align}
Then we have the following result, and the second part of Theorem \ref{Main} follows immediately.
\begin{lemma}\label{2210}
Let $\a=(a_0,a_1,a_2) \in \F_{q^2}^3$. Then $\tilde{f}(X)=\tilde{f}_{\a}(X)=a_0\bar{X}^2+a_1X\bar{X}+a_2X^2 \in \F_{q^2}[X]$ is 2-to-1 over $\F_{q^2}$ if and only if $\tilde{e}^2-\tilde{\te}^{q+1} \in \F_q^*$ is a square. Moreover, in this case $\tilde{f}(X)$ is linear equivalent to $X^2$.
\end{lemma}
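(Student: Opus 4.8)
The plan is to settle the two assertions in turn: first decide when $\tilde f$ is $2$-to-$1$ (this turns out to coincide with planarity), and then, under that condition, build an explicit linear equivalence with $X^2$.

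\emph{Step 1: from $2$-to-$1$ to planarity.} Every monomial occurring in $\tilde f$ has even exponent, so $\tilde f(-X)=\tilde f(X)$; hence for $X\neq 0$ the fibre of $\tilde f$ through $X$ contains $\{X,-X\}$, so the unique value with a single preimage (which exists since $q^2$ is odd) must be $\tilde f(0)=0$, and then $\tilde f^{-1}(0)=\{0\}$. As $\tilde f$ is a DO polynomial, Theorem~\ref{6:DO21} now shows that $\tilde f$ is $2$-to-$1$ if and only if it is planar. So it suffices to decide planarity of $\tilde f$.

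\emph{Step 2: when is $\tilde f$ planar.} Expanding $\tilde f(x+a)-\tilde f(x)$ gives $\phi_a(x)+\tilde f(a)$ with $\phi_a(x)=(2a_0\bar a+a_1a)\bar x+(a_1\bar a+2a_2a)x$, an $\F_q$-linear map of $\F_{q^2}$. A map $x\mapsto Qx+P\bar x$ is a permutation of $\F_{q^2}$ iff $Q\bar Q\neq P\bar P$, and a short computation gives $Q\bar Q-P\bar P=2\bigl(2\tilde{e}\,a\bar a+\tilde{\te}\,a^2+\bar{\tilde{\te}}\,\bar a^2\bigr)$. Hence $\tilde f$ is planar iff the $\F_q$-valued quadratic form $a\mapsto 2\tilde{e}\,a\bar a+\tilde{\te}\,a^2+\bar{\tilde{\te}}\,\bar a^2$ on $\F_{q^2}$ is anisotropic. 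If $\tilde{\te}=0$ this form is $2\tilde{e}\,a\bar a$, anisotropic iff $\tilde{e}\neq 0$, i.e. iff $\tilde{e}^2-\tilde{\te}^{q+1}$ is a nonzero square. If $\tilde{\te}\neq 0$, dividing by $\bar a^2$ and setting $t=a/\bar a=a^{1-q}$ — which runs over $\mu_{q+1}$ as $a$ runs over $\F_{q^2}^{*}$ — shows the form is isotropic iff $\tilde{\te}\,t^2+2\tilde{e}\,t+\bar{\tilde{\te}}$ has a root in $\mu_{q+1}$. This polynomial is \emph{SCR} of the shape treated in Lemma~\ref{lem1}, with $\beta=2\tilde{e}\in\F_q$ and $\Delta=4(\tilde{e}^2-\tilde{\te}^{q+1})$, so by Lemma~\ref{lem1}(3) it has no root in $\mu_{q+1}$ exactly when $\tilde{e}^2-\tilde{\te}^{q+1}$ is a nonzero square in $\F_q$. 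This proves the first assertion.

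\emph{Step 3: the equivalence with $X^2$.} Assume $\tilde{e}^2-\tilde{\te}^{q+1}\in\F_q^{*}$ is a square. I would look for $\F_q$-linear permutations $L_1(Z)=\gamma Z+\delta\bar Z$ and $L_2(X)=\mu X+\nu\bar X$ with $\tilde f=L_1\circ X^2\circ L_2$. Expanding $L_1\!\bigl((\mu X+\nu\bar X)^2\bigr)$ and matching the coefficients of the (distinct) monomials $X^2,\ X^{q+1},\ X^{2q}$ gives a linear system in $(\gamma,\delta)$ whose coefficient matrix has rank $2$ as soon as $\mu\bar\mu\neq\nu\bar\nu$; its consistency reduces to the single equation $a_2\bar\mu\nu+a_0\mu\bar\nu=\tfrac{a_1}{2}(\mu\bar\mu+\nu\bar\nu)$. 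Writing $w=\bar\mu\nu$, this first forces $\tilde{\te}\,w\in\F_q$, so (noting $a_1\neq 0$, since $a_1=0$ would give $\tilde{\te}=0$) one sets $w=\tilde{\te}^{-1}r$ with $r\in\F_q^{*}$; a short computation using $a_2\bar{\tilde{\te}}+a_0\tilde{\te}=a_1\tilde{e}$ then yields $\mu\bar\mu+\nu\bar\nu=2\tilde{e}r/\tilde{\te}^{q+1}$ and $(\mu\bar\mu)(\nu\bar\nu)=r^2/\tilde{\te}^{q+1}$, so $\mu\bar\mu$ and $\nu\bar\nu$ are the roots of a quadratic over $\F_q$ of discriminant $\frac{4r^2}{\tilde{\te}^{2(q+1)}}\bigl(\tilde{e}^2-\tilde{\te}^{q+1}\bigr)$ — a nonzero square precisely by the hypothesis. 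Choosing $\mu$ with $\mu\bar\mu$ equal to one of those roots, $\nu=w/\bar\mu$, and then solving for $(\gamma,\delta)$, produces such $L_1,L_2$: here $L_2$ is a permutation because $\mu\bar\mu\neq\nu\bar\nu$, and $L_1$ is automatically a permutation, since a non-injective $\F_q$-linear map applied to the image of $X\mapsto X^2$ (of size $(q^2+1)/2$) cannot reproduce the image of the planar map $\tilde f$ (also of size $(q^2+1)/2$). The remaining case $\tilde{\te}=0$ forces $a_1=0$ (else $\tilde{e}=0$, contradicting planarity), so $\tilde f=a_0\bar X^2+a_2X^2$, which is carried to $X^2$ by the single post-composition $Z\mapsto\gamma Z+\delta\bar Z$ with $\gamma/\delta=-\bar a_2/a_0$, a permutation exactly because $a_2\bar a_2\neq a_0\bar a_0$.

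\emph{Main obstacle.} The derivative identity and the passage through Lemma~\ref{lem1} are routine; the real work is Step 3, i.e. verifying that the system for $(\gamma,\delta,\mu,\nu)$ is solvable with $L_1,L_2$ genuine permutations. The one place where the hypothesis is used essentially is that $\tilde{e}^2-\tilde{\te}^{q+1}$ being a square in $\F_q$ is exactly what makes the auxiliary quadratic for $(\mu\bar\mu,\nu\bar\nu)$ split over $\F_q$ with distinct roots, so that suitable $\mu,\nu\in\F_{q^2}$ exist.
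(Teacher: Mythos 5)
Your proposal is correct, and it reaches the conclusion by a genuinely different route from the paper. The paper argues through the accompanying rational function $\tilde{G}=\tilde{B}/\tilde{A}$ on $\mu_{q+1}$: the Hurwitz genus formula (Lemma~\ref{RH}) forces $\tilde{G}$ to be $\overline{\F}_q$-linear equivalent to $X^2$ with exactly two ramification points, namely the roots of $\tilde{V}(X)=\tilde{\te}^qX^2+2\tilde{e}X+\tilde{\te}$; Lemma~\ref{lem1} then decides whether these lie in $\mu_{q+1}$ according to whether $\tilde{e}^2-\tilde{\te}^{q+1}$ is a square in $\F_q^*$, and the two cases are converted via (the proofs of) Lemmas~\ref{lem5} and~\ref{lem7} into $\tilde{f}$ being equivalent to $(x,y)\mapsto(x^2,y^2)$ (visibly not $2$-to-$1$) or to $X^2$. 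You instead work directly with the derivative $D_a\tilde{f}$, reduce planarity to the anisotropy of the explicit $\F_q$-valued form $2\tilde{e}\,a\bar a+\tilde{\te}a^2+\bar{\tilde{\te}}\bar a^2$, and apply Lemma~\ref{lem1} to the SCR quadratic $\tilde{\te}t^2+2\tilde{e}t+\bar{\tilde{\te}}$ --- essentially the conjugate-reverse of the paper's $\tilde{V}$, with the same discriminant $4(\tilde{e}^2-\tilde{\te}^{q+1})$ --- so the same key lemma enters at the same point, but via the differential criterion rather than via ramification. Your Step~3 then builds the equivalence $\tilde f=L_1\circ X^2\circ L_2$ by explicitly solving for the coefficients of the linearized maps, with the square hypothesis entering exactly where the auxiliary quadratic for $(\mu\bar\mu,\nu\bar\nu)$ must split over $\F_q$ with distinct roots; I checked the identities $Q\bar Q-P\bar P=2\bigl(2\tilde{e}a\bar a+\tilde{\te}a^2+\bar{\tilde{\te}}\bar a^2\bigr)$ and $a_2\bar{\tilde{\te}}+a_0\tilde{\te}=a_1\tilde{e}$, the consistency analysis of the $3\times 2$ system, and the image-size argument for the injectivity of $L_1$, and they all hold. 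What your approach buys is a self-contained, elementary proof that avoids the genus formula and the transfer between $\mu_{q+1}$ and $\P^1(\F_q)$; what the paper's approach buys is reuse of machinery already developed and, in the non-square case, an explicit witness for the failure of the $2$-to-$1$ property.
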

\begin{proof}[Proof of Lemma \ref{2210}]
Since $\gcd(2,q-1)=2$, as similar to Lemma \ref{2211}, we see that $\tilde{A}(X)$ has no roots in $\mu_{q+1}$ and $\tilde{G}(X)$ is non-constant on $\mu_{q+1}$. Since $\deg\tilde{G} \leq 2$, we actually must have $\deg \tilde{G}=2$, that is, $\gcd(\tilde{A}(X),\tilde{B}(X))=1$. In particular, no points have ramification index divisible by $p=\mathrm{char}(\overline{\F}_q)$. By Hurwitz genus formula (Lemma \ref{RH}), $\tilde{G}$ must have exactly two branch points in $\P^1(\overline{\F}_q)$, each having a unique preimage in $\P^1(\overline{\F}_q)$. Hence in particular $\tilde{G}$ is $\overline{\F}_q$-linear equivalent to $X^2$.

Define $\tilde{V}(X):=\tilde{B}'(X)\tilde{A}(X)-\tilde{B}(X)\tilde{A}'(X)$. It is easy to see that
\[ \tilde{V}(X)=\tilde{\te}^qX^2+2\tilde{e}X+\tilde{\te}, \]
where $\tilde{\te}$ and $\tilde{e}$ are given in (\ref{7:te}) and (\ref{7:e}) respectively.
The ramification points of $\tilde{G}$ are the roots of $\tilde{V}$ in $\overline{\F}_q$ (and $\infty$ if $\deg \tilde{V}=1$).  Since $\tilde{G}$ is separable, we have $\tilde{V}(X) \neq 0$, that is, at least one of $\tilde{e}$ and $\tilde{\te}$ is nonzero. Moreover, $\tilde{G}$ has two distinct ramification points. Hence $\tilde{\Delta}:=\frac{1}{4}\Delta(\tilde{V})=\tilde{e}^2-\tilde{\te}^{q+1} \neq 0$. Furthermore, by Lemma \ref{lem1}, $\tilde{V}$ has no roots in $\mu_{q+1}$ if and only if $\tilde{\Delta}$ is a square in $\F_q^*$.

We first assume $\tilde{\Delta}$ is a non-square in $\F_q$, then both ramification points of $\tilde{G}$ are in $\mu_{q+1}$. Following the proof of Lemma \ref{lem5} in Section \ref{Cla}, we see that $\tilde{G}(X)=\rho^{-1} \circ X^2 \circ \sigma$ for some degree-one $\rho,\sigma \in \F_{q^2}[X]$ mapping $\mu_{q+1}$ onto $\P^1(\F_q)$. Following the proof of Lemma \ref{lem7} in Section \ref{AEf}, we see that $\tilde{f}(X)$ is linear equivalent to the function $(x,y) \mapsto (x^2,y^2): \F_q^2 \to \F_q^2$. This map is not 2-to-1 since the four distinct points $(1,1),(-1,1),(1,-1),(-1,-1) \in \F_q^2$ are all mapped to the same point $(1,1) \in \F_q^2$.

Now we assume $\tilde{\Delta}$ is a square in $\F_q^*$, so neither ramification points of $\tilde{G}$ are in $\mu_{q+1}$. Following the proof of Lemma \ref{lem5} in in Section \ref{Cla}, we see that $\tilde{G}(X)=\rho^{-1} \circ X^2 \circ \sigma$ for some degree-one $\rho,\sigma \in \F_{q^2}[X]$ permuting $\mu_{q+1}$. Following the proof of Lemma \ref{lem7} in Section \ref{AEf}, we see that $\tilde{f}(X)$ is linear equivalent to $X^2$, which is indeed 2-to-1 and hence planar. This completes the proof of Lemma \ref{2210}.
\end{proof}

\section{Appendix}\label{Appen}
In this Appendix we treat Cases 6) and 7) from Theorem \ref{AE2}. We assume that $k \nmid \l$, as the case $k \mid \l$ is covered by the second part of Theorem \ref{Main}.

\subsection{Case 6) is NOT planar}
In this subsection we prove that the function $P_3(x,y)=(x^{Q+1}-x^{Q}y,xy^{Q}+\ep y^{Q+1}): \F_q^2 \to \F_q^2$ is never planar for any $\ep \in \F_q^* \setminus \{-1\}$ if $k \nmid \l$.

For any $a,b,c,d \in \F_q$, denote by $N$ the number of $(x,y) \in \F_q^2$ satisfying $$P_3(x+a,y+b)-P_3(x,y)=(c,d),$$ that is,
\begin{eqnarray} \label{7:eq1} \left\{\begin{array}{lll}
(x+a)^{Q+1}-(x+a)^Q(y+b)-\left(x^{Q+1}-x^Qy\right)&=&c,\\
(x+a)(y+b)^{Q}+\ep (y+b)^{Q+1}-\left(xy^{Q}+\ep y^{Q+1}\right)&=&d.
\end{array}
\right.\end{eqnarray}
$P_3(x,y)$ is planar if and only if $N=1$ for any $(a,b) \in \F_q^2 \setminus \{(0,0)\}$ and any $(c,d) \in \F_q^2$.

Eq (\ref{7:eq1}) can be simplified as
\begin{eqnarray} \label{7:eq2} \left\{\begin{array}{lll}
x^Qa+xa^Q+a^{Q+1}-(x^Qb+a^Qy+a^Qb)&=&c,\\
xb^Q+ay^Q+ab^Q+\ep(y^Qb+yb^Q+b^{Q+1})&=&d.
\end{array}
\right.\end{eqnarray}

Now suppose $ab \ne 0$. Taking
\begin{eqnarray*} \label{7:cd} c=a^{Q+1}-a^Qb, \quad d=ab^Q+\ep b^{Q+1}, \end{eqnarray*}
and making a change of variables $x \mapsto ax, y \mapsto by$, we can simplify Eq (\ref{7:eq2}) as
\begin{eqnarray} \label{7:eq3} \left\{\begin{array}{lll}
x^Q\left(1-\frac{b}{a}\right)+x-\frac{b}{a}y&=&0,\\
y^Q\left(1+\frac{\ep b}{a}\right)+\frac{\ep b}{a}y+x&=&0.
\end{array}
\right.\end{eqnarray}

We assume further that
\begin{eqnarray*} \label{7:ab}
ab\left(1-\frac{b}{a}\right)\left(1+\frac{\ep b}{a}\right) \ne 0.
\end{eqnarray*}
Denote
\begin{eqnarray*} \label{7:tau12}
t:=\frac{b}{a}, \quad \tau_1:=\frac{1}{1-t}, \quad \tau_2:=\frac{\ep t}{1+\ep t}.
\end{eqnarray*}
Then $t$ satisfies $t \in \F_q \setminus \{0,1,-\ep^{-1}\}$ and Eq (\ref{7:eq3}) is equivalent to the equations
\begin{eqnarray} \label{7:eq33} \left\{\begin{array}{lll}
x^Q+\tau_1 x+(1-\tau_1)y&=&0,\\
y^Q+\tau_2y+(1-\tau_2)x&=&0.
\end{array}
\right.\end{eqnarray}

Again we make a change of variables $x+y=X, x-y=Z$. Since $q$ is odd, this transformation $(x,y) \mapsto (X,Z)$ from $\F_q^2$ to itself is invertible. Adding and subtracting the two equations in (\ref{7:eq33}) and using the new variables $X,Z$ we obtain
\begin{eqnarray} \label{7:eq4} \left\{\begin{array}{lll}
X^Q+X+(\tau_1-\tau_2)Z&=&0,\\
Z^Q+(\tau_1+\tau_2-1)Z&=&0.
\end{array}
\right.\end{eqnarray}
In summary, if $P_3(x,y)$ is planar, then for any $t \in \F_q\setminus \{0,1,-\ep^{-1}\}$, there is always a unique solution $(X,Z) \in \F_q^2$ satisfying Eq (\ref{7:eq4}). We will prove that this is impossible.

Now suppose $P_3(x,y)$ is planar. Note that Eq (\ref{7:eq4}) always has the solution $X=Z=0$ for any such $t$. Taking $Z=0$, this means that $X^Q+X=0$ has only the solution $X=0$, that is, $X^{Q-1}=-1$ is not solvable for $X \in \F_{q}^*$. Denote $\delta=\gcd(\l,k)$. By Lemma \ref{2:gcd}, this requires that
\begin{eqnarray*}
(-1)^{\frac{q-1}{\gcd(Q-1,q-1)}}=(-1)^{\frac{p^k-1}{\gcd(p^{\l}-1,p^k-1)}}=(-1)^{\frac{p^k-1}{p^{\delta}-1}}=-1,
\end{eqnarray*}
that is, $\frac{k}{\delta}$ is odd. Indeed when $\frac{k}{\delta}$ is odd, then $X^Q+X$ is a permutation on $\F_q$, so the first equation in (\ref{7:eq4}) always has a unique solution $X \in \F_q$ for any given $Z$ satisfying the second equation of (\ref{7:eq4}). Since $P_3(x,y)$ is planar, the second equation of (\ref{7:eq4}) must always have the unique solution $Z=0$ for any $t \in \F_q \setminus \{0,1,-\ep^{-1}\}$, that is, the equation $Z^{Q-1}=1-\tau_1-\tau_2$ is not solvable for $Z \in \F_q^*$. Since $\gcd(Q-1,q-1)=p^{\delta}-1$, this is equivalent to saying that the equation
\begin{eqnarray*}
Z^{p^{\delta}-1}=1-\tau_1-\tau_2=\frac{-(\ep+1)t}{(1+\ep t)(1-t)}
\end{eqnarray*}
is not solvable for $Z \in \F_q$ for any $t$ satisfying $t \in \F_q\setminus \{0,1,-\ep^{-1}\}$. Denote by $\chi$ a multiplicative character of order $p^\delta-1$ defined on $\F_q^*$. Then we have
\[\frac{1}{p^{\delta}-1} \sum_{i=0}^{p^\delta-2} \chi^i \left(\frac{-(\ep+1)t}{(1+\ep t)(1-t)}\right)=0, \quad \forall t \in \F_q \setminus \{0,1,-\ep^{-1}\}. \]
So
\begin{eqnarray} \label{7:eqA} A:=\sum_{t \in \F_q \setminus \{0,1,-\ep^{-1}\}} \sum_{i=0}^{p^\delta-2} \chi^i \left(\frac{-(\ep+1)t}{(1+\ep t)(1-t)}\right)=0. \end{eqnarray}
On the other hand, isolating the term for $i=0$, we see that
\[A=q-3+\sum_{i=1}^{p^\delta-2} \sum_{t \in \P^1(\F_q)} \chi^i \left(\frac{-(\ep+1)t}{(1+\ep t)(1-t)}\right).\]
Here we use the convention that $\chi^i(0)=\chi^i(\infty)=0$ for the non-principal character $\chi^i$ $(1 \le i \le p^\delta-2)$. Since $\ep \in \F_q^* \setminus \{-1\}$, by the Weil bound (see Lemma \ref{2:weilbound})
\[\left|\sum_{t \in \P^1(\F_q)} \chi^i \left(\frac{-(\ep+1)t}{(1+\ep t)(1-t)}\right)\right| \le 2 \sqrt{q},\]
we find that
\[A\ge q-3-(p^\delta-2) 2 \sqrt{q}=p^k-2p^{\delta+k/2}+4p^{k/2}-3=:f(p,k,\delta).\]
Since $k \nmid \l$, we have $k/\delta \ge 3$ is odd, if $k \ge 6$, then $\delta \le \frac{k}{3}$, and we have
\[f(p,k,\delta) \ge p^{\frac{5k}{6}} \left(p^{\frac{k}{6}}-2\right)+4 p^{\frac{k}{2}}-3 >0, \quad \forall p \ge 3. \]
If $1<k <6$, then we must have $k=3$ or $5$ and $\delta=1$, we can also check easily that in these two cases $f(p,k,1)>0$ for any $p \ge 3$. From this we conclude that $A>0$, which contradicts (\ref{7:eqA}), implying that $P_3(x,y)$ cannot be planar on $\F_q^2$ for any $\ep \in \F_q^* \setminus \{-1\}$.

 \subsection{Case 7)}

 In this subsection we prove that the function $f_2(X)=X^{Q+q}+\ep X^{Q+1}$ is not planar on $\F_{q^2}$ for any $\ep \in \F_{q^2}^* \setminus \mu_{q+1}$ if $k \nmid \l$.

Suppose $f_2(X)$ is planar. For any $a,b \in \F_{q^2}$, denote by $N$ the number of $x \in \F_{q}^2$ satisfying $$f_2(x+a)-f_2(x)=b,$$ that is,
\begin{eqnarray} \label{8:eq1}
(x+a)^{Q+q}+\ep(x+a)^{Q+1}-\left(x^{Q+q}+\ep x^{Q+1}\right)&=&b.
\end{eqnarray}
Then $N=1$ for any $a \in \F_{q^2}^*$ and any $b \in \F_{q^2}$.

Eq (\ref{8:eq1}) can be simplified as
\begin{eqnarray} \label{8:eq2}
x^Qa^q+x^qa^Q+\ep(x^Qa+xa^Q)+a^{Q+q}+a^{Q+1}&=&b.
\end{eqnarray}
Now take $b= a^{Q+q}+a^{Q+1}$ and make a change of variables $x \mapsto ax$, we can simplify Eq (\ref{8:eq2}) as
\begin{eqnarray} \label{8:eq3}
x^Q\left(1+\ep a^{1-q}\right)+x^q+\ep a^{1-q}&=&0.
\end{eqnarray}
Since $\ep \in \F_{q^2}^* \setminus \mu_{q+1}$ and $a^{1-q} \in \mu_{q+1}$, we have $1+\ep a^{1-q} \ne 0$. Denote
\[\tau:=\frac{1}{1+\ep a^{1-q}}, \]
then Eq (\ref{8:eq3}) can be written as
\begin{eqnarray} \label{8:eq33}
x^Q+\tau x^q+(1-\tau)x&=&0.
\end{eqnarray}
Since $x \in \F_{q^2}$, taking a $q$-th power on both sides of Eq (\ref{8:eq33}) we have
\begin{eqnarray} \label{8:eq4}
\bar{x}^{Q}+(1-\bar{\tau}) \bar{x}+\bar{\tau}x&=&0.
\end{eqnarray}
Here we use $\bar{x}:=x^q$ for any $x \in \F_{q^2}$. Again we make a change of variables $y=x+\bar{x}, z=x-\bar{x}$. Since $q$ is odd and $x=\frac{y+z}{2}$, this transformation $x \mapsto (y,z)$ is invertible, where $y,z$ satisfy the conditions
\begin{eqnarray} \label{8:yz} y \in \F_q, \qquad \bar{z}=-z, \quad z \in \F_{q^2}.
\end{eqnarray}
Adding and subtracting both sides of Eq (\ref{8:eq33}) and Eq (\ref{8:eq4}), and using the new variables $y,z$ we obtain
\begin{eqnarray} \label{8:eq5} \left\{\begin{array}{lll}
y^Q+y+(-\tau+\bar{\tau})z&=&0,\\
z^Q+(1-\tau-\bar{\tau})z&=&0,
\end{array}
\right.\end{eqnarray}
and $y,z$ are subject to conditions (\ref{8:yz}).

In summary, since $f_2(X)$ is planar, for any $a \in \F_{q^2}^*$, the only solution $(y,z)$ of Eq (\ref{8:eq5}) subject to conditions (\ref{8:yz}) is $y=z=0$. We will prove that this is impossible.

Taking $z=0$ in the first equation of (\ref{8:eq5}), very similar to the proof of {\bf Case 6)}, we see that that $\frac{k}{\delta}$ must be odd where $q=p^k,Q=p^{\l}, \delta=\gcd(k,\l)$.

If $\frac{\l}{\delta}$ is even, taking a $q$-th power on $f_2(X)$ which we assume to be planar, we see that $\tilde{f}_2(X)=X^{\tilde{Q}+q}+\ep^{-q} X^{\tilde{Q}+1}$ is also planar where $\tilde{Q}=qQ=p^{\tilde{\l}}$ satisfying $\tilde{\l}=k+\l$. Then for this new planar function $\tilde{f}_2(X)$ we have $\delta=\gcd(k,\l)=\gcd(k,\tilde{\l})$, and both $\frac{k}{\delta}$ and $\frac{\tilde{\l}}{\delta}$ are odd. So by considering $\tilde{f}_2(X)$ if necessary, we may assume that $\frac{k\l}{\delta^2}$ is odd. By Lemma \ref{2:gcd}, this implies that
\begin{eqnarray} \label{8:qQ} p^\delta-1=\gcd(Q-1,q-1)=\gcd(Q-1,q^2-1).\end{eqnarray}

By considering $z \ne 0$ from Eq (\ref{8:yz}) and from the second equation of (\ref{8:eq5}), we conclude that since $f_2(X)$ is planar, the equations
\begin{eqnarray} \label{8:eq6}
z^{q-1}&=&-1,\\
\label{8:eq7}
z^{Q-1}&=&\alpha,
\end{eqnarray}
have no common solutions $z \in \F_{q^2}$ for any $a \in \F_{q^2}^*$. Here we denote
\begin{eqnarray*} \alpha=\tau+\bar{\tau}-1=\frac{1-\ep \bar{\ep}}{(1+\ep t)\left(1+\bar{\ep} t^{-1}\right)} \in \F_q^*, \quad t:=a^{1-q} \in \mu_{q+1}. \end{eqnarray*}
It is easy to see that as $a$ runs over $\F_{q^2}^*$, the value $t$ runs over the set $\mu_{q+1}$.

If $\alpha^{\frac{p^k-1}{p^{\delta}-1}}=-1$, since $q,Q$ satisfy conditions (\ref{8:qQ}), we have, by Lemma \ref{2:gcd},
\begin{eqnarray*} \alpha^{\frac{q^2-1}{\gcd(q^2-1,Q-1)}}&=&\alpha^{\frac{p^{2k}-1}{p^{\delta}-1}}=\left(\alpha^{\frac{p^k-1}{p^{\delta}-1}}\right)^{p^k+1}=(-1)^{p^k+1}=1,\\
\alpha^{\frac{q-1}{\gcd(q-1,Q-1)}}&=&\alpha^{\frac{p^{k}-1}{p^{\delta}-1}}=-1.\end{eqnarray*}
This implies that Eq (\ref{8:eq7}) is solvable for $z \in \F_{q^2} \setminus \F_q$. Let $z_0 \in \F_{q^2} \setminus \F_{q}$ be such a solution, then
\begin{eqnarray} \label{8:acon} -1=\alpha^{\frac{p^k-1}{p^{\delta}-1}}=\left(z_0^{Q-1}\right)^{\frac{p^k-1}{p^{\delta}-1}}=\left(z_0^{q-1}\right)^{\frac{Q-1}{p^{\delta}-1}}.  \end{eqnarray}
So $\left(z_0^{q-1}\right)^{2 \frac{Q-1}{p^\delta-1}}=1$. Together with $\left(z_0^{q-1}\right)^{q+1}=1$ and
$\gcd\left(q+1,2 \frac{Q-1}{p^\delta-1}\right)=\gcd(q+1,Q-1)=2$, which follows by applying Lemma \ref{2:gcd} to the assumption that $\frac{\l}{\delta}$ is odd, we obtain
$$\left(z_0^{q-1}\right)^2=1 \quad \Longrightarrow \quad z_0^{q-1} \in \{ \pm 1\}. $$
In view of (\ref{8:acon}), $z_0^{q-1} \ne 1$, so we must have $z_0^{q-1}=-1$, that is, $z_0 \in \F_{q^2} \setminus \F_q$ is a common solution of Eq (\ref{8:eq6}) and Eq (\ref{8:eq7}). Since $f_2(X)$ is planar, this cannot happen, that is, we must have
\begin{eqnarray} \label{8:atcon}
\alpha^{\frac{p^k-1}{p^{\delta}-1}}= \left(\frac{1-\ep \bar{\ep}}{(1+\ep t)\left(1+\bar{\ep} t^{-1}\right)}\right)^{\frac{p^k-1}{p^{\delta}-1}} \ne -1 \quad \forall t \in \mu_{q+1}.
\end{eqnarray}

Fixing an element $\xi \in \F_{q^2} \setminus \F_q$ and using the bijection $\rho(x)=\frac{x+\xi}{x+\bar{\xi}}$ to identify $\P^1(\F_q)$ and $\mu_{q+1}$, we can rewrite $\alpha$ as
\[\alpha=\frac{1-\ep\bar{\ep}}{\left(1+\ep \frac{x+\xi}{x+\bar{\xi}}\right)\left(1+\bar{\ep} \frac{x+\bar{\xi}}{x+\xi} \right)}=-\lambda_{\ep} \frac{N(x)}{D(x)}, \quad x \in \P^1(\F_q),
\]
where
\begin{eqnarray*}
\lambda_{\ep}:=\frac{\ep \bar{\ep}-1}{(1+\ep)(1+\bar{\ep})},
\end{eqnarray*}
and
\begin{eqnarray} \label{8:NDx}
N(x)=(x+\xi)(x+\bar{\xi}), \quad D(x)=\left(x+\frac{\bar{\xi}+\ep \xi}{1+\ep}\right) \left(x+\frac{\xi+\bar{\ep} \bar{\xi}}{1+\bar{\ep}}\right).
\end{eqnarray}
Since $\xi \ne \bar{\xi}$ and $\ep \bar{\ep} \ne 1$, it is easy to check that the four elements $\xi, \bar{\xi}, \frac{\bar{\xi}+\ep \xi}{1+\ep}$ and $\frac{\xi+\bar{\ep} \bar{\xi}}{1+\bar{\ep}}$ are all in $\F_{q^2} \setminus \F_q$ and are all distinct. Now Eq (\ref{8:atcon}) implies that
\begin{eqnarray*}
(-\alpha)^{\frac{p^k-1}{p^{\delta}-1}}=\left(\lambda_{\ep} \frac{N(x)}{D(x)}\right)^{\frac{p^k-1}{p^\delta-1}} \ne 1 \quad \forall x \in \P^1(\F_q).
\end{eqnarray*}
This is equivalent to saying that the equation
\begin{eqnarray*}
Z^{p^{\delta}-1}=\lambda_{\ep} \frac{N(x)}{D(x)}
\end{eqnarray*}
is not solvable for $Z \in \F_q$ for any $x \in \P^1(\F_q)$. Denote by $\chi$ a multiplicative character of order $p^\delta-1$ on $\F_q$. This means that
\[\frac{1}{p^{\delta}-1} \sum_{i=0}^{p^\delta-2} \chi^i \left(\lambda_{\ep} \frac{N(x)}{D(x)}\right)=0, \quad \forall x \in \P^1(\F_q). \]
So we have
\begin{eqnarray} \label{8:eqA} B:=\sum_{x \in \P^1(\F_q)} \sum_{i=0}^{p^\delta-2} \chi^i \left(\lambda_{\ep}\frac{N(x)}{D(x)}\right)=0. \end{eqnarray}
On the other hand, isolating the term for $i=0$, we see that
\[B=q+1+\sum_{i=1}^{p^\delta-2} \sum_{x \in \P^1(\F_q)} \chi^i \left(\lambda_{\ep}\frac{N(x)}{D(x)}\right).\]
Since $N(x),D(x)$ are of the form given in (\ref{8:NDx}), by the Weil bound (Lemma \ref{2:weilbound})
\[\left|\sum_{x \in \P^1(\F_q)} \chi^i \left(\lambda_{\ep} \frac{N(x)}{D(x)}\right)\right| \le 3 \sqrt{q}, \quad \forall 1 \le i \le p^\delta-2,\]
we find that
\[B \ge q+1-(p^\delta-2) 3 \sqrt{q}=p^k-3p^{\delta+k/2}+6p^{k/2}+1=:f(p,k,\delta).\]
We have
\[f(p,k,\delta)=p^{\delta+k/2} \left(p^{-\delta+k/2}-3+6p^{-\delta}\right)+1. \]
Since $k \nmid \l$, we have $k/\delta \ge 3$ is odd, if $k \ge 6$, then $\delta \le \frac{k}{3}$, it is easy to see that
\[f(p,k,\delta) \ge p^{\frac{5k}{6}} \left(p^{\frac{k}{6}}-3\right)+6 p^{\frac{k}{2}}+1 >0, \quad \forall p \ge 3. \]
If $1<k <6$, then we must have $k=3$ or $5$ and $\delta=1$, and we have
\[f(p,k,1)=p^{1+k/2} \left(p^{k/2-1}-3+6p^{-1}\right)+1. \]
One can verify easily that
\[f(p,5,1)>0, \quad f(p,3,1)>0, \quad \forall p \ge 3. \]
From this we conclude that $B>0$, which contradicts (\ref{8:eqA}), implying that $f_2(X)$ is not planar on $\F_{q^2}$ for any $\ep \in \F_{q^2}^* \setminus \mu_{q+1}$.

\section{Conclusion}\label{Con}

In this paper, we characterize planar functions from a class of quadrinomials in terms of linear equivalence. This supplements the classification result of APN functions \cite{Faruk2} from this class of quadrinomials in characteristic 2. The main ingredient of the paper is the ``geometric method'' developed by Ding and Zieve \cite{Ding4} in odd characteristic to study this problem, and the linear equivalence result followed naturally from this method. It may be interesting to see if other results can be obtained in this way.



\end{document}